\title[Higher Whitehead torsion]{Higher Whitehead torsion and the geometric assembly map}
\author{Wolfgang Steimle}
\DeclareMathOperator*{\colim}{colim}
\DeclareMathOperator*{\hocolim}{hocolim}
\DeclareMathOperator*{\holim}{holim}
\begin{document}

\newtheorem{thm}{Theorem}[section]
\newtheorem{cor}[thm]{Corollary}
\newtheorem{lem}[thm]{Lemma}
\newtheorem{prop}[thm]{Proposition}

\newnumbered{defn}[thm]{Definition}
\newnumbered{rem}[thm]{Remark}
\newnumbered{obs}[thm]{Observation}
\newunnumbered{notat}{Notation}


\newcommand{\lto}{\longrightarrow}
\newcommand{\mor}{\opn{mor}}
\newcommand{\pathto}{\rightsquigarrow}
\newcommand{\opn}{\operatorname}
\newcommand{\id}{\opn{id}}
\newcommand{\Proj}{\opn{Proj}}
\newcommand{\Cyl}{\opn{Cyl}}
\newcommand{\Sing}{\opn{Sing}}
\newcommand{\cone}{\opn{cone}}
\newcommand{\cyl}{\opn{cyl}}
\newcommand{\Wh}{\mathrm{Wh}}
\newcommand{\Whp}[1]{\Wh(\pi #1)}
\newcommand{\WhPL}{\Wh^{PL}}
\newcommand{\G}{\mathrm{G}}
\newcommand{\PL}{\mathrm{PL}}
\newcommand{\TOP}{\mathrm{TOP}}
\newcommand{\DIFF}{\mathrm{DIFF}}
\newcommand{\Homeo}{\mathrm{Homeo}}
\newcommand{\hofib}{\mathrm{hofib}}
\newcommand{\FIB}{\opn{FIB}}
\newcommand{\Aut}{\opn{Aut}}
\newcommand{\Out}{\opn{Out}}
\newcommand{\map}{\opn{map}}
\newcommand{\Hom}{\opn{Hom}}
\newcommand{\End}{\opn{End}}
\newcommand{\GL}{\opn{GL}}
\newcommand{\im}{\opn{im}}
\newcommand{\coker}{\opn{coker}}
\newcommand{\simp}{\opn{simp}}
\newcommand{\res}{\opn{res}}
\newcommand{\ind}{\opn{ind}}
\newcommand{\eval}{\opn{eval}}
\newcommand{\xican}{\xi_{\opn{can}}}
\newcommand{\Wall}{\mathrm{Wall}}
\newcommand{\sr}{\opn{sr}}
\newcommand{\Ho}{\opn{Ho}}

\newcommand{\arincl}{\ar@{^{(}->}}
\newcommand{\arinclinv}{\ar@{_{(}->}}
\newcommand{\norm}[1]{\Vert #1\Vert}
\newcommand{\eqclass}[1]{#1/{\sim}}
\newcommand{\sections}[2]{\Gamma\biggl(\begin{array}{c} {#1}\\ \downarrow\\ {#2}\end{array} \biggr)}
\newcommand{\verticalhofib}[3]{\hofib_{#1}\biggl(\begin{array}{c} {#2}\\ \downarrow\\ {#3}\end{array} \biggr)}

\newcommand{\lift}[4]{\opn{Lift}\left(\renewcommand{\arraystretch}{0.5}\begin{matrix} && {#1} \\ && \downarrow \\ {#2} & \overset{#3}{\longrightarrow} & {#4} \end{matrix}\right)\renewcommand{\arraystretch}{1.0}} 
\newcommand{\smalllift}[4]{\opn{Lift}\left(\begin{smallmatrix} && {#1} \\ && \downarrow \\ {#2} & \overset{#3}{\longrightarrow} & {#4} \end{smallmatrix}\right)}

\newcommand{\Ab}{\mathrm{Ab}}
\newcommand{\RMod}{R\mathrm{-mod}}
\newcommand{\Sp}{\mathrm{Spectra}}
\newcommand{\OSp}{\Omega\mathrm{-Spectra}}
\newcommand{\Cat}{\mathrm{Cat}}
\newcommand{\op}{^{op}}
\newcommand{\ltwotop}{\ell^2\textendash \mathrm{Top}}
\newcommand{\cpCW}{\mathbf{cpCW}}
\newcommand{\cat}{\mathbf{cat}}
\newcommand{\CGHaus}{\mathbf{CGHaus}}
\newcommand{\sk}{\opn{sk}}

\newcommand{\RR}{\mathbb{R}}
\newcommand{\ZZ}{\mathbb{Z}}
\newcommand{\NN}{\mathbb{N}}
\newcommand{\QQ}{\mathbb{Q}}
\newcommand{\CC}{\mathbb{C}}
\newcommand{\calB}{\mathcal{B}}
\newcommand{\calC}{\mathcal{C}}
\newcommand{\calE}{\mathcal{E}}
\newcommand{\calH}{\mathcal{H}}
\newcommand{\calL}{\mathcal{L}}
\newcommand{\calP}{\mathcal{P}}
\newcommand{\calQ}{\mathcal{Q}}
\newcommand{\calS}{\mathcal{S}}
\newcommand{\calR}{\mathcal{R}}
\newcommand{\calU}{\mathcal{U}}
\newcommand{\Bun}{\mathrm{Bun}}
\newcommand{\Fib}{\mathrm{Fib}}
\newcommand{\fib}{\opn{fib}}
\newcommand{\Rfd}{\mathcal{R}^{fd}}
\newcommand{\Rf}{\mathcal{R}^f}
\newcommand{\Rfh}{\mathcal{R}^f_h}


\bibliographystyle{alpha}

\SelectTips{eu}{10}

\hyphenation{pa-ram-e-trized}

\extraline{This work has been supported by the Graduiertenkolleg ``Analytische Topo\-logie und Metageometrie'' of the Deutsche Forschungsgemeinschaft, by the Hausdorff Center of Mathematics and by the Leibniz-Preis of Wolfgang L\"uck.}


\maketitle

\begin{abstract}
We construct a higher Whitehead torsion map, using algebraic $K$-theory of spaces, and show that it satisfies the usual properties of the classical Whitehead torsion. This is used to describe a ``geometric assembly map'' defined on stabilized structure spaces in purely homotopy theoretic terms.
\end{abstract}

\section{Introduction}

\noindent Given a space $X$, the structure space $\calS_n(X)$ is the space of all homotopy equivalences $M\to X$ where $M$ is an $n$-dimensional manifold (of some given type, such as compact or closed, differentiable, PL, or topological). Obviously $X$ has the homotopy type of such a manifold if and only if the structure space of $X$ is non-empty. In this case, $\pi_0\calS_n(X)$ is the central object of interest for the classification of manifolds in the homotopy type of $X$; the higher homotopy type of $\calS_n(X)$ is closely related to the study of automorphisms of these and to the classification of families of manifolds homotopy equivalent to $X$ \cite{Weiss-Williams(2001)}. 

If $p\colon E\to B$ is a given fibration, then the above construction can be generalized as follows: A point in the structure space $\calS_n(p)$ of $p$ is given by a bundle of $n$-dimensional manifolds $E'\to B$ over $B$ together with a fiber homotopy equivalence $E'\to E$. (So the structure space of $X$ as above is the structure space of the trivial fibration $X\to *$.) Pull-back defines a pairing
\[\calS_n(B)\times\calS_k(p)\to\calS_{n+k}(E);\]
thus if $B$ comes with a given structure, then evaluation of this pairing defines a map
\[\alpha\colon \calS_k(p)\to\calS_{n+k}(E).\]
Geometrically this map assembles all the manifold structures on the individual fibers to one manifold structure on $E$, so we call it the \emph{geometric assembly map.} It is relevant for instance in the study of fibering questions.

Algebraic $K$-theory of spaces \cite{Waldhausen(1985)} is a key tool for the understanding of families of manifolds. The connection to the topology of manifolds is established by the stable parametrized $h$-cobordism theorem \cite{Waldhausen-Jahren-Rognes(2008)}, which classifies families of $h$-cobordisms in terms of $K$-theory, in a stable range. Recently, Hoehn \cite{Hoehn(2009)} has used the stable parametrized $h$-cobordism theorem to describe the homotopy type of the stabilized structure space
\[\calS_\infty(p):=\colim\bigl(\calS_n(p)\xrightarrow{\times [0,1]} \calS_{n+1}(p) \xrightarrow{\times [0,1]} \dots \bigr),\]
working with compact topological manifolds (possibly with boundary). More precisely, using works of Dwyer-Weiss-Williams \cite{Dwyer-Weiss-Williams(2003)}, she constructed a specific $K$-theoretic invariant
\[\calS_\infty(p)\to R(p)\]
and showed that this invariant, together with a map $T^v$ that describes tangent bundle information, defines a homotopy equivalence
\[\calS_\infty(p)\xrightarrow{\simeq} R(p)\times\map(E,B\TOP).\]

Hence, there is (up to homotopy) a unique dotted lift making the following diagram homotopy commutative:
\begin{equation}\label{intro:main_diagram}
\xymatrix{
{\calS_\infty(p)} \ar[d]^\alpha \ar[rr]^(.4){\simeq} && R(p)\times\map(E,B\TOP) \ar@{.>}[d]\\
{\calS_\infty(E)} \ar[rr]^(.4)\simeq && R(E)\times\map(E,B\TOP)
}
\end{equation}
The main result of this paper is to give a description of this dotted map purely in terms of homotopy theory.

The first step towards such a description is to observe that if $p$ is a fiber bundle of compact manifolds, then the space $R(p)$ is homotopy equivalent to a specific infinite loop space $\bar R(p)$. A further advantage of $\bar R(p)$ is its better functoriality: Apart from the fact that a map $f\colon B'\to B$ induces a restriction map $f^* \colon \bar R(p)\to \bar R(f^*p)$, any fiberwise map $g\colon p\to p'$ induces a map $g_* \colon \bar R(p)\to \bar R(p')$.

A more formal reasoning shows that Hoehn's map can be replaced by a map
\[\tau\colon \calS_\infty(p)\to \bar R(p)\]
which we call parametrized Whitehead torsion. In fact, if $B$ is a point, then $\pi_0 \bar R(E)\cong\Wh(\pi_1 E)$ and the map
\[\pi_0(\tau)\colon\pi_0\calS_\infty(E)\to\Wh(\pi_1 E)\]
sends the class of a homotopy equivalence $\varphi\colon M\to E$ to its classical Whitehead torsion. Moreover, we will prove:

\begin{thm}
The composition rule, additivity, product rule, and homeomorphism invariance of the classical Whitehead torsion generalize to the parametrized Whitehead torsion.
\end{thm}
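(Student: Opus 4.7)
The plan is to prove each of the four properties separately, in each case reducing the assertion to a statement internal to $\bar R(p)$ via the construction of $\tau$ as a parametrized $K$-theoretic invariant in the spirit of Dwyer--Weiss--Williams. The enhanced functoriality of $\bar R$ in both pull-back and push-forward maps, noted before the theorem, will be the key tool for matching the geometric operations that occur on the domain of $\tau$ with algebraic operations on its target.

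Composition and additivity are closely linked, and I would handle them first. For composition, given composable fiber homotopy equivalences $E'' \to E' \to E$ of manifold bundles over $B$, I would assemble the pair into a filtered retractive diagram over $E$ with $E'$ as intermediate filtration step; Waldhausen's additivity theorem applied to the associated cofibration sequence then yields the formula $\tau(\varphi\circ\psi) = \tau(\varphi) + \varphi_*\tau(\psi)$, first at the level of $\pi_0$ and, after carrying the argument through the parameter space $B$, as an actual homotopy. Additivity for a geometric pushout decomposition of the total space is handled by the same mechanism: the pushout produces a cofibration sequence in the Waldhausen category of retractive spaces, and additivity delivers the expected alternating-sum formula after post-composing with the relevant push-forward maps.

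For the product rule I would exploit the external product in algebraic $K$-theory of spaces, which descends to a pairing $\bar R(p_1)\wedge\bar R(p_2)_+ \to \bar R(p_1\times p_2)$, and verify that the Dwyer--Weiss--Williams construction intertwines geometric fiber products of manifold bundles with this pairing. Specialising one factor to a constant bundle then extracts the Euler characteristic of its fiber and recovers the classical product formula in its parametrized form.

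The main obstacle is homeomorphism invariance, which even in the classical unparametrized setting is equivalent to Chapman's theorem on the topological invariance of Whitehead torsion. My plan here is to use the fact that a bundle homeomorphism $\varphi\colon E'\to E$ over $B$ induces an actual isomorphism, not merely a homotopy equivalence, of the underlying Dwyer--Weiss--Williams input, so that its image under $\tau$ should land in an a priori contractible piece of $\bar R(p)$. Making this precise requires either a parametrized version of Chapman's theorem producing a canonical null-homotopy of $\tau$ on bundle homeomorphisms, or a two-step reduction that first verifies vanishing fiberwise via the classical theorem and then uses continuity in $B$ together with the bundle hypothesis to patch the fiberwise null-homotopies into a global one. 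Arranging these null-homotopies in a globally coherent fashion, rather than merely pointwise, is where I expect the essential technical difficulty to sit.
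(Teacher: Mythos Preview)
Your difficulty assessment is inverted: homeomorphism invariance is the \emph{easiest} of the four, not the hardest. Recall that $\tau$ is defined as a map out of the structure space $\calS_n(p)$, with $\tau(\varphi)=\chi^\%(\varphi)-\chi^\%(\id)$. A fiber homeomorphism $\varphi\colon E'\to E$ furnishes a canonical $1$-simplex in $\calS_n(p)$ joining the structure $(E',\varphi)$ to $(E,\id)$: this comes directly from the Fill-in property for $\mathbf{Bun}_n(B;F)$, since homeomorphisms are morphisms in that category and mapping cylinders supply the fill-in. Thus $\chi^\%(\varphi)$ and $\chi^\%(\id)$ are joined by a canonical path in the target, and $\tau(\varphi)$ is canonically nullhomotopic. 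No Chapman-type argument is needed; topological invariance is already built into the Dwyer--Weiss--Williams excisive characteristic, which is constructed directly for topological manifold bundles via compact ENRs and cell-like maps. Your proposed two-step reduction via fiberwise Chapman plus patching is both unnecessary and, as you suspected, would be hard to make coherent.

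For the other three properties your outline is plausible but departs from the paper's route. The composition rule in the paper is not proved via a filtered cofibration sequence and Waldhausen additivity; instead one establishes covariant naturality of $\chi^\%$ under fiber homotopy equivalences (by placing $p'$ and $p''$ as the ends of a fibration over $B\times I$ and comparing restrictions), after which the formula $\tau\circ\varphi_*\simeq\varphi_*\circ\tau+\tau(\varphi)$ is a short formal manipulation of $\tau=\chi^\%-\chi^\%(\id)$. Additivity is obtained by quoting the additivity theorems of Dorabia\l a and Badzioch--Dorabia\l a for the Dwyer--Weiss--Williams characteristics on universal bundles-with-splitting, rather than by invoking Waldhausen additivity directly on a single cofibration. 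The product rule is not handled via external $K$-theory products: the paper first treats contractible $X$ by exhibiting an explicit homotopy of characteristic pairs $(\pi_*\chi',\pi_*\chi'_e)\rightsquigarrow(\chi,\chi_e)$, then deduces the general case from additivity by induction on a handlebody decomposition of $X$. Your approaches might be made to work, but each would require substantial independent development not present here.
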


See sections \ref{whtors} to \ref{section:additivity} for more precise statements and proofs. -- Coming back to diagram \eqref{intro:main_diagram}, assume (for simplicity) that $B$ is connected and choose some base point $b$ in $B$. Letting $i\colon F:=p^{-1}(b)\to E$ be the inclusion and denoting by $\chi_e(B)\in\ZZ$ the Euler characteristic, we may define
\[\beta\colon \bar R(p)\xrightarrow{\mathrm{restr.}} \bar R(F) \xrightarrow{\chi_e(B)\cdot i_*} \bar R(E).\]\
Using the above constructions and results, it follows:

\begin{thm}\label{intro:main_theorem}
The following diagram commutes up to homotopy:
\[\xymatrix{
{\calS_\infty(p)} \ar[d]^\alpha \ar[rr]^(.4){(\tau, T^v)}_(.4){\simeq} && \bar R(p)\times\map(E,B\TOP) \ar[d]^{\beta\times(+p^*TB)}\\
{\calS_\infty(E)} \ar[rr]^(.4){(\tau, T^v)}_(.4)\simeq && \bar R(E)\times\map(E,B\TOP)
}\]
Here the map $(+p^*TB)$ is given by Whitney sum with the tangent bundle of $B$, pulled back to $E$ via $p$.
\end{thm}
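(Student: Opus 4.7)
The plan is to treat the two factors on the right-hand side separately, since the product decomposition $(\tau, T^v)$ converts the square into two independent diagrams.

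The tangent-bundle component is essentially formal. If $(q\colon E'\to B,\varphi)\in\calS_k(p)$ then $\alpha$ equips $E'$ with its total (assembled) manifold structure; under the standard splitting $TE'\cong T^vq\oplus q^*TB$, the map $(+p^*TB)\circ T^v$ is by construction the image of $T^v(q)$ in $\map(E,B\TOP)$ after transport along $\varphi$. So this diagram commutes up to a canonical homotopy coming from the classifying map of the bundle sum.

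The main content is the torsion component, i.e.\ showing $\tau\circ\alpha\simeq\beta\circ\tau$. The plan is to reduce this to the product, additivity, and composition rules for $\tau$ already established (the paper's first theorem), applied to a suitable cell-by-cell decomposition of the base $B$. First, invoke naturality of $\tau$ under fiberwise maps to reduce to the universal case where $q\colon E'\to B$ is a point in $\calS_k(p)$ and one must compute $\tau$ of the assembled homotopy equivalence $\varphi\colon E'\to E$ of $(n+k)$-manifolds in terms of its restriction to fibers. Next, pick a handle (or CW) decomposition of $B$ and express $E'\to E$ as an iterated gluing of trivial pieces $F_b\times D^i\to F_b\times D^i$; by the product rule, each such piece contributes $(-1)^i\,i_{b,*}\tau(\varphi_b)$ to the total torsion, and by additivity these contributions combine additively. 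Finally, observe that for varying $b$ the pushforwards $i_{b,*}\colon \bar R(F_b)\to\bar R(E)$ are all homotopic to a single map $i_*$ composed with the restriction $\bar R(p)\to\bar R(F)$, since over a connected $B$ the fibers are identified (up to homotopy) canonically. Summing with signs yields the total coefficient $\sum_i(-1)^i c_i(B)=\chi_e(B)$, which is exactly the scalar in the definition of $\beta$.

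The main obstacle is carrying out this ``handle-by-handle'' additivity argument at the level of spaces rather than merely at $\pi_0$: one must exhibit the requisite homotopy as a coherent construction, not just an equality of torsion elements. This requires using the additivity statement from Section~\ref{section:additivity} in its parametrized form, together with the fact (implicit in the construction of $\tau$) that $\tau$ factors through a Mayer-Vietoris-type decomposition of the bundle. Once this coherence is in place, the homotopy commutativity of the square is obtained by assembling the cellwise homotopies; naturality of $\bar R$ under fiberwise maps (used crucially in the construction of $\beta$) guarantees that the result is well-defined independently of the chosen handle decomposition, so that the homotopy class of the filler depends only on $p$.
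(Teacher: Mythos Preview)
Your proposal is correct and follows essentially the same approach as the paper: split into the torsion and tangent-bundle factors, handle the latter via the splitting $TE'\cong T^v q\oplus q^*TB$, and prove $\tau\circ\alpha\simeq\beta\circ\tau$ by induction over a handle decomposition of $B$ using the product rule, the composition rule, and the parametrized additivity theorem. The paper organizes the torsion argument slightly differently---it first isolates two reduction statements (stability under crossing with a contractible manifold, and inheritance along a codimension-one splitting of $B$) and then inducts---but this is the same mechanism you describe, and your identification of the space-level coherence as the main issue (resolved by the fact that additivity, product, and composition are stated as homotopies of maps, not equalities in $\pi_0$) is exactly right. One point you gloss over: the tangent-bundle splitting is not entirely formal for topological manifolds, and the paper supplies an explicit microbundle argument (Lemma~\ref{fbs:decomposition_of_tangent_bundle}) to produce the required path in $\opn{MB}_n(E)$; you should flag that this needs a construction rather than just an appeal to a ``standard'' decomposition.
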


In a sequel paper \cite{Steimle(2011b)}, the author will apply this result to the fibering problem which asks whether a given map between manifolds is homotopic to the projection map of a fiber bundle. Theorem \ref{intro:main_theorem} together with the ``converse Riemann-Roch theorem'' of \cite{Dwyer-Weiss-Williams(2003)} allow to set up a complete obstruction theory to the stable fibering problem.

\subsection*{Organization of the work}

The first three sections of this text are devoted to the definition of the parametrized Whitehead torsion map. It is built upon the works of Dwyer-Weiss-Williams on the parametrized $A$-theory characteristic \cite{Dwyer-Weiss-Williams(2003)}. However, this characteristic is defined for a single bundle, whereas we want to get an invariant on the structure space. Following Hoehn's method in spirit, we therefore consider the parametrized characteristic in a universal situation. So, in the first section, a general method by Hughes-Taylor-Williams \cite{Hughes-Taylor-Williams(1990)} will be presented and slightly simplified, which interprets these structure spaces as spaces of lifts of certain classifying spaces. 

For the convenience of the reader we recall in section \ref{section:DWW} the main results of \cite{Dwyer-Weiss-Williams(2003)} that we need. The actual definition of the parametrized Whitehead torsion map is done in section \ref{whtors}. It is also in this section where some elementary properties will be proved, such as the composition rule and homeomorphism invariance. 

The following two sections are devoted to additivity and the product rule. Section \ref{section:sttors} defines the torsion on the stabilized structure space. In section \ref{fbs} the geometric assembly map is defined and Theorem \ref{intro:main_theorem} is proved. Finally, section \ref{section:h_cob} discusses the question which elements of $\pi_*\bar R(p)$ can be realized by glueing fiberwise $h$-cobordisms.

An appendix collects some technical results on fibrations, which are needed to make the classifying-space machinery work. A second appendix gives the link to classical simple homotopy theory.

\subsection*{Acknowledgements}

This work is part of the author's PhD thesis written at the University of M\"unster under the supervision of Wolfgang L\"uck. I am grateful to him for his support and encouragement as well as to Bruce Hughes and Bruce Williams for many helpful discussions. I am also thankful to the referee for his or her careful reading and valuable comments; in particular the simplified proof of Lemma \ref{strspaces:lax_naturality_of_excisive_characteristic} is due to him or her.


\section{Structure spaces on fibrations}\label{strspaces}

The goal of this section is to review the definition of the space $\calS_n(p)$ of $n$-dimensional compact manifold structures on a fibration $p\colon E\to B$ and to show that under suitable assumptions, this structure space is weakly homotopy equivalent to the space of lifts in the following diagram
\begin{equation}\label{strspaces:diagram_of_lifts}
\xymatrix{
 && {\coprod_{[M]}B\TOP(M)} \ar[d]\\
B \ar[rr]^p \ar@{.>}[rru] && BG(F)
}
\end{equation}
Here $\TOP(M)$ is the simplicial group of self-homeomorphisms of $M$, $G(M)$ is the simplicial monoid of self-homotopy equivalences, and the coproduct ranges over the isomorphism classes of compact $n$-man\-ifolds homotopy equivalent to $F$. This result is well-known; the goal is to present (and slightly simplify) the machinery of \cite{Hughes-Taylor-Williams(1990)} which can be used for a proof, as analogous results will be needed in a variety of similar situations later on.

We will always work in the category $\CGHaus$ of compactly generated Hausdorff spaces, and all constructions are to be taken in that category.

Fix once and for all a set $\calU$ of
cardinality at least $2^{\vert\RR\vert}$ and a bijection $\calU\times\calU\to\calU$.

Let $p\colon E\to B$ be a fibration over a paracompact space $B$. By definition, an
\emph{$n$-dimensional compact manifold structure on $p$} is a commutative diagram
\[\xymatrix{
E' \ar[rd]_{p'} \ar[rr]^\varphi&& E\ar[ld]^p \\
&B,
}\]
where 
\begin{itemize}
\item $p'$ is a bundle of $n$-dimensional compact (not necessarily closed)
manifolds (i.e.~$p'$ is the projection map of a fiber bundle with fibers $n$-dimensional compact topological
manifolds), 
\item  $\varphi$ is a homotopy equivalence, and
\item $E'$ is a subset of $B\times \calU$ such that $p'$ corresponds to the projection from $B\times\calU$ to $B$.
\end{itemize}

The last condition ensures that all $n$-dimensional compact manifold structures on $p$ form a
set $\calS_n(p)_0$. Given a pull-back square of fibrations
\begin{equation}\label{strspaces:pull_back_square_of_fibrations}
\xymatrix{
E_0 \ar[rr] \ar[d]^{p_0} && E\ar[d]^p \\
B_0 \ar[rr]^f && B
}
\end{equation}
a compact manifold structure $x=(p',E')$ on $p$ induces a compact manifold structure $f^*x=(f^*p', f^*E')$ on
$p_0$ of the same dimension by pull-back via $f$. (Notice that $f^*E'$ is naturally a subset of $B_0\times \calU$. This ensures that the pull-back operation is strictly functorial.)

One can therefore define a simplicial set $\calS_n(p)_\bullet$ by
\[\calS_n(p)_k:=\calS_n(p\times\id_{\Delta^k})_0,\]
such that the simplicial operations are induced by the pull-back operation on the level of standard simplices.

\begin{defn}
The \emph{space of $n$-dimensional compact manifold structures on $p$} is the
geometric realization
\[\calS_n(p):=\vert\calS_n(p)_\bullet\vert.\]
If $B$ is a point, we simply write $\calS_n(E)$ for $\calS_n(p)$.
\end{defn}

Given a fiber homotopy equivalence $\psi\colon p'\to p$ of fibrations over $B$,
we clearly obtain a simplicial map $\psi_*\colon\calS_n(p')_\bullet \to 
\calS_n(p)_\bullet$ by composition, inducing a map on the structure spaces. On the
other hand, given a pull-back square \eqref{strspaces:pull_back_square_of_fibrations} of fibrations, the restriction
operation leads to a map $f^*\colon\calS_n(p)\to\calS_n(p_0)$.

The following two lemmas (which are well-known and stated for completeness) show that both the covariant and the contravariant operation satisfy homotopy invariance.

\begin{lem}\label{strspaces:homotopy_invariance_I}
If $\psi,\varphi\colon p'\to p$ are fiber homotopy equivalences that are fiber
homotopic, then $\psi_*\simeq\varphi_*\colon\calS_n(p')\to\calS_n(p)$.
\end{lem}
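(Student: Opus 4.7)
The natural strategy is to produce a simplicial homotopy $\calS_n(p')_\bullet\times\Delta^1_\bullet\to\calS_n(p)_\bullet$ out of the given fiber homotopy $H\colon E'\times I\to E$, where $H_0=\psi$ and $H_1=\varphi$. Geometric realization, together with the natural homeomorphism $|X_\bullet\times\Delta^1_\bullet|\cong |X_\bullet|\times I$ valid in $\CGHaus$, will then yield the required homotopy $\psi_*\simeq\varphi_*$.

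The key construction is the following. Given a $k$-simplex $\sigma\in\Delta^1_k$, regarded as a map $\sigma\colon\Delta^k\to\Delta^1=I$ of topological simplices, define
\[h^\sigma\colon E'\times\Delta^k\lto E\times\Delta^k,\quad(e,s)\longmapsto\bigl(H(e,\sigma(s)),\,s\bigr).\]
This is a fiber map over $B\times\Delta^k$ from $p'\times\id_{\Delta^k}$ to $p\times\id_{\Delta^k}$, and it is a fiber homotopy equivalence: the affine homotopy in $\Delta^1$ from $\sigma$ to the constant $0$-map induces a fiber homotopy over $B\times\Delta^k$ between $h^\sigma$ and $\psi\times\id_{\Delta^k}$, and the latter is a fiber homotopy equivalence because $\psi$ is. Post-composition with $h^\sigma$ therefore defines a map
\[(h^\sigma)_*\colon\calS_n(p')_k=\calS_n(p'\times\id_{\Delta^k})_0\lto\calS_n(p\times\id_{\Delta^k})_0=\calS_n(p)_k.\]
As $k$ and $\sigma$ vary, these maps assemble into the desired simplicial map $\Phi\colon\calS_n(p')_\bullet\times\Delta^1_\bullet\to\calS_n(p)_\bullet$.

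Two things then remain to be checked. First, simpliciality of $\Phi$ reduces to the naturality identity $h^{\tau\circ\alpha}=(\id\times\alpha)^*h^\tau$, for any morphism $\alpha\colon[k]\to[\ell]$ in $\Delta$ and any $\tau\in\Delta^1_\ell$, combined with the strict functoriality of pull-back on $\calS_n(-)$. Both are immediate from the definitions; note in particular that post-composition with $h^\sigma$ only alters the homotopy equivalence component of a manifold structure and leaves the underlying bundle and its inclusion into $B\times\Delta^k\times\calU$ untouched, so the set-theoretic constraints of the structure-space definition are automatically preserved. Secondly, $\Phi$ must specialize to $\psi_*$ and $\varphi_*$ over the two vertices of $\Delta^1$; but for $\sigma$ the constant $k$-simplex at $\epsilon\in\{0,1\}$, $h^\sigma$ is simply $H_\epsilon\times\id_{\Delta^k}$, so this is clear. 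The main effort of the argument is thus a bookkeeping one: the verification itself is essentially tautological once the family $\{h^\sigma\}$ is in place.
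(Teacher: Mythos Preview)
The paper does not prove this lemma; it is stated there without proof as ``well-known and stated for completeness.'' Your argument is correct and is exactly the standard one: the family $\{h^\sigma\}_{\sigma\in\Delta^1_k}$ gives a simplicial homotopy, the simpliciality check reduces to the identity $(\id\times|\alpha|)^*h^\tau=h^{\tau\circ\alpha}$ which is immediate from the formula, and the endpoint identifications are clear.
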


\begin{lem}
\begin{enumerate}
\item Let $i_0,i_1\colon B\to B\times I$ be the inclusions at 0 and 1. Then $i_0^*,
i_1^*\colon\calS_n(p\times\id_I)\to\calS_n(p)$ are homotopic.
\item If in \eqref{strspaces:pull_back_square_of_fibrations}, $f$ is a homotopy equivalence, then so is $f^*\colon\calS_n(p)\to\calS_n(p_0)$.
\end{enumerate}
\end{lem}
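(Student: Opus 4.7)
For part (1), construct an explicit simplicial homotopy $\calS_n(p \times \id_I)_\bullet \times \Delta^1_\bullet \to \calS_n(p)_\bullet$. For a $k$-simplex $\sigma \in \Delta^1_k$, let $|\sigma|\colon |\Delta^k| \to I$ be its affine geometric realization. Given $x \in \calS_n(p \times \id_I)_k$, pull $x$ back along the map $B \times \Delta^k \to B \times I \times \Delta^k$, $(b, t) \mapsto (b, |\sigma|(t), t)$. Strict functoriality of pull-back makes this assignment simplicial, and the restrictions to the two vertices of $\Delta^1$ are precisely $i_0^*$ and $i_1^*$; geometric realization yields the desired homotopy.

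For part (2), choose a homotopy inverse $g\colon B \to B_0$ of $f$ together with a homotopy $H\colon B \times I \to B$ from $\id_B$ to $f \circ g$. Applying the homotopy lifting property of $p$ to $H$ with initial data $\id_E\colon E \times \{0\} \to E$ yields a fiber homotopy equivalence $\Phi\colon p \times \id_I \to H^*p$ of fibrations over $B \times I$, restricting to $\id_p$ at $0$ and to some fiber homotopy equivalence $\phi\colon p \to (f \circ g)^*p$ at $1$ (that $\Phi$ is a fiber homotopy equivalence is essentially a version of Dold's theorem; see the appendix on fibrations). Strict naturality of pull-back then gives
\[
i_0^* \circ \Phi_* = i_0^*, \qquad i_1^* \circ \Phi_* = \phi_* \circ i_1^*,
\]
as maps out of $\calS_n(p \times \id_I)$, where on the left $i_t^*$ denotes restriction from $\calS_n(H^*p)$. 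By Lemma \ref{strspaces:homotopy_invariance_I} applied to a fiber homotopy inverse of $\Phi$, the map $\Phi_*$ is a homotopy equivalence; combining this with part (1) and cancelling $\Phi_*$ gives $\phi_* \circ i_0^* \simeq i_1^*$ as maps $\calS_n(H^*p) \to \calS_n((f \circ g)^*p)$. Precomposing with $H^*\colon \calS_n(p) \to \calS_n(H^*p)$ and using $i_t^* \circ H^* = (H \circ i_t)^*$ produces $\phi_* \simeq g^* \circ f^*$; since $\phi_*$ is a homotopy equivalence, so is $g^* \circ f^*$, giving $f^*$ a left homotopy inverse. A symmetric argument using a homotopy from $\id_{B_0}$ to $g \circ f$ and the fibration $p_0 = f^*p$ shows that $f^* \circ g^*$ is also a homotopy equivalence, giving $f^*$ a right homotopy inverse. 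Hence $f^*$ is a homotopy equivalence.

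The main obstacle is the construction of $\Phi$ as a fiber homotopy equivalence restricting to $\id_p$ at $0$, together with the careful bookkeeping of which diagrams commute strictly versus up to homotopy. Once this is in place, everything else is formal manipulation with Lemma \ref{strspaces:homotopy_invariance_I} and the strict functoriality of pull-back built into the definition of $\calS_n$.
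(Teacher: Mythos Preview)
The paper does not actually prove this lemma; it is introduced as ``well-known and stated for completeness'' and left without argument. So there is no paper proof to compare against, and your write-up is a genuine contribution.

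Your part (i) is correct: the map $(x,\sigma)\mapsto (j_\sigma)^*x$, with $j_\sigma(b,t)=(b,|\sigma|(t),t)$, is simplicial because $j_\sigma\circ(\id_B\times|\alpha|)=(\id_{B\times I}\times|\alpha|)\circ j_{\sigma\alpha}$, and strict functoriality of pull-back does the rest.

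Part (ii) is essentially right, but the final sentence is too quick. The symmetric argument yields that
\[
f^*\circ g^*\colon \calS_n(p_0)\longrightarrow \calS_n\bigl((gf)^*p_0\bigr)
\]
is a homotopy equivalence; however the $f^*$ appearing here is $f^*\colon\calS_n(g^*p_0)\to\calS_n(f^*g^*p_0)$, not the original $f^*\colon\calS_n(p)\to\calS_n(p_0)$. So you do not literally obtain a right inverse for the map you care about. One clean fix: write $a=f^*\colon\calS_n(p)\to\calS_n(p_0)$, $b=g^*\colon\calS_n(p_0)\to\calS_n((fg)^*p)$, and $c=f^*\colon\calS_n((fg)^*p)\to\calS_n((fgf)^*p)$. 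You have shown $ba\simeq\phi_*$ and $cb\simeq\psi_*$ are homotopy equivalences; hence $b$ is both split injective and split surjective on homotopy groups, so $b$ is a weak equivalence between CW spaces, and then $a\simeq b^{-1}(ba)$ is a homotopy equivalence. Alternatively, use the naturality square $f^*\circ\phi_*=(f^*\phi)_*\circ f^*$ to transport the split-surjectivity of $c$ back to $a$. Either way the gap is small and the repair is routine.
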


We now proceed to show that under suitable hypotheses the structure space is weakly homotopy equivalent to the space of lifts in \eqref{strspaces:diagram_of_lifts}.

For two spaces $F$ and $B$, with $B$ paracompact, let $\mathbf{Bun}_n(B;F)$ be
the
category where an object is a bundle $E\to B$ with fibers compact
$n$-dimensional topological manifolds homotopy equivalent to $F$. Again we
assume that $E$, as a set, is a subset of $B\times \calU$ and the inclusion map
$E\to B\times\calU$ is a map over $B$. Morphisms in this category are to be
isomorphisms of such bundles. 

Denote by $\cpCW$ the category of compact
CW spaces, with continuous maps. Then the rule $X\mapsto \mathbf{Bun}_n(B\times
X;F)$ defines a functor 
\[\mathbf{Bun}_n(B;F)\colon\cpCW\op\to\cat,\] 
to the category of small categories. By giving an explicit system of simplices
in $\cpCW$ (i.e.~an embedding of categories $\Delta\to\cpCW$ such that $[n]$
maps to an $n$-simplex and a morphism $[m]\to[n]$ maps to the corresponding face
or degeneracy map), we can precompose to get a simplicial small category
(i.e.~simplicial object in the category of small
categories) 
\[\mathbf{Bun}_n(B;F)_\bullet\colon \Delta\op\to\cpCW\op\to\cat.\]

It is obvious that different choices of systems of simplices lead to 
naturally isomorphic simplicial small categories.

Similarly define $\mathbf{Fib}(B;F)$ to be the category
where an object is a (Hure\-wicz) fibration over $B$ with fibers homotopy
equivalent to $F$. We also require that the total space of the fibration is a subset of $B\times\calU$ such that the inclusion map is fiberwise over $B$. A morphism in $\mathbf{Fib}(B;F)$ is to be
a fiber homotopy equivalence. Again this gives rise to a functor
$\cpCW\op\to\cat$ by the rule $X\mapsto \mathbf{Fib}(B\times X;F)$ and
therefore to a simplicial category, by precomposing with the system of simplices. 
Since $B$ is supposed to be paracompact, any bundle over $B\times
\Delta^n$ is a fibration, such that we obtain a natural transformation
$\mathbf{Bun}_n(B;F)_\bullet\to\mathbf{Fib}(B;F)_\bullet$.

Consider more generally any functor $\mathbf C\colon\cpCW\op\to\cat$. Here are three
properties that such a functor may have. In fact all our examples of such
functors will satisfy all of these properties. It is useful to think of an
object of $\mathbf C(X)$ as an object ``over'' $X$ and the functoriality
operation as a restriction.

\begin{description}
\item[Amalgamation property] For any push-out square
\[\xymatrix{
X_0 \arincl[rr] \arinclinv[d]&& X_2 \ar[d]\\
X_1 \ar[rr] && X
}\]
of compact CW spaces such that for $i=1,2$, the map $X_0\to X_i$ is the inclusion of a subcomplex,  the square
\[\xymatrix{
{\mathbf C(X)} \ar[rr] \ar[d] && {\mathbf C(X_2)} \ar[d]\\
{\mathbf C(X_1)} \ar[rr]&& {\mathbf C(X_0)}
}\]
with inclusion-induced maps is a pull-back.
\end{description}

\begin{rem}
\begin{enumerate}
\item In comparison to \cite{Hughes-Taylor-Williams(1990)} this condition is slightly different. This difference does not affect the conclusions we are going to draw.
\item To verify that a commutative square 
\[\xymatrix{
{\mathbf A} \ar[rr] \ar[d] && {\mathbf B} \ar[d]\\
{\mathbf C} \ar[rr] && {\mathbf D}
}\]
of categories is a pull-back, it is enough to
verify the following two assertions:
\begin{enumerate}
\item Given any two objects $b\in\mathbf B$
and $c\in\mathbf C$ projecting to the same element $d\in\mathbf D$, there exists
a unique $a\in\mathbf A$ projecting to $b$ and $c$.
\item Given any two morphisms $\beta$ in $\mathbf B$ and $\gamma$ in $\mathbf
C$ projecting to the same morphism $\delta$ in $\mathbf D$, there exists a
unique morphism $\alpha$ in $\mathbf A$ projecting to $\beta$ and $\gamma$.
\end{enumerate}
\end{enumerate}
\end{rem}

\begin{description}
\item[Straightening Property] Denote by $p\colon\Delta^k\times I\to\Delta^k$
the projection and by $i\colon \Delta^k\to\Delta^k\times I$ the inclusion at 0.
For any object $E\in\mathbf C(\Delta^k\times I)$, there are to be morphisms
\[F\colon E\to p^*i^* E \quad\mathrm{and} \quad G\colon p^*i^* E\to E\]
in $\mathbf C(\Delta^k\times I)$ which are both the identity map $i^*E\to i^*E$
upon restriction with $i$.
\item[Fill-in property] Again denote by $p\colon\Delta^k\times I\to\Delta^k$
the projection. For any diagram
\[E_0 \xrightarrow{\varphi_0} E \xleftarrow{\varphi_1} E_1\]
in $\mathbf C(\Delta^k)$, there is an object $\bar E$ over $\Delta^k\times I$ and a morphism $\Phi\colon
\bar E\to p^* E$ which restricts to $\varphi_0$ over 0 and to $\varphi_1$ over
1. 

Moreover, given a second diagram
\[F_0 \xrightarrow{\varphi_0} F \xleftarrow{\varphi_1} F_1\]
in $\mathbf C(\Delta^k)$ together with morphisms
$\psi_i\colon F_i\to E$, $i=0,1$, which agree with the data $(E_0, E_1,
\varphi_0, \varphi_1)$ when restricted to a collection of faces of $\Delta^k$,
there are extensions $(\bar E, \Phi)$ and $(\bar F, \Psi)$ of $(E_0, E_1,
\varphi_0, \varphi_1)$ and $(F_0, F_1, \psi_0, \psi_1)$ that agree when
restricted to the same collection $\times I$.
\end{description}

We again restrict our functor $\mathbf C\colon\cpCW\op\to\cat$ to a
simplicial category $\mathbf C_\bullet\colon\Delta\op\to\cat$. Any simplicial
small category $\mathbf C_\bullet$ gives rise to the following three simplicial sets:
\begin{itemize}
\item The 0-nerve $C_\bullet:=N_0\mathbf C_\bullet$,
\item The (diagonal of the) bisimplicial set $N_\bullet\mathbf C_\bullet$, and
\item For each object $c\in\mathbf C_0$, the (diagonal of the) nerve
$N_\bullet\End(c)_\bullet$ of the simplicial monoid $\End(c)_\bullet$. The
$k$-simplices of $\End(c)_\bullet$ are just the endomorphisms of $c\in\mathbf
C_k$. (The object $c$ is understood to be lifted to $\mathbf C_k$ via the
degeneracy operation.)
\end{itemize}

If the original functor $\mathbf C$ satisfies the Amalgamation, Straightening,
and Fill-in properties, then the following holds (see \cite[\S\S~7 and
8]{Hughes-Taylor-Williams(1990)}):
\begin{enumerate}
\item All simplicial sets $N_k\mathbf C_\bullet$ are Kan.
\item The natural inclusion $C_\bullet=N_0\mathbf C_\bullet\to N_\bullet\mathbf
C_\bullet$ is a homotopy equivalence.
\item The natural inclusion $\End(c)_k\to \mathbf C_k$, for all objects
$c\in\mathbf C_0$, gives rise to a homotopy equivalence
\[\coprod_{[c]\in\pi_0 C_\bullet} N_\bullet\End(c)_\bullet \to N_\bullet\mathbf
C_\bullet.\]
\item Suppose that $\mathbf D\colon\cpCW\op\to\cat$ is another functor
and that $\mathbf f_\bullet\colon\mathbf D_\bullet\to \mathbf C_\bullet$ is a natural
transformation between the associated simplicial categories. For $c\in\mathbf
C_0$, the comma categories $\mathbf f_k / c$ define a simplicial small category
$\mathbf f_\bullet / c$ whose zero-nerve fits into a homotopy fibration sequence
\[ N_0 \mathbf f_\bullet / c \to D_\bullet \overset{f_\bullet}{\to} C_\bullet\]
(homotopy fiber over the point $c\in C_0$).
\end{enumerate}

The following has been proven in a slightly different form in \cite[\S
7]{Hughes-Taylor-Williams(1990)}. Recall that a space $B$ is called \emph{ULC} (or \emph{locally equiconnected}) if there is a neighborhood $U\subset B\times B$ of the diagonal and a homotopy
\[H\colon U\times I\to B\]
between the first and the second projection which is relative to the diagonal. For example, if $B$ is a metrizable ANR (e.g., a locally finite CW complex), then it is also ULC.

\begin{lem}
Both functors $\mathbf{Bun}_n(B;F)$ and $\mathbf{Fib}(B;F)$ satisfy the
Amalgamation, Straightening, and Fill-in properties provided $B$ is  
metrizable and ULC.
\end{lem}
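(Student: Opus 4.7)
The plan is to verify the three properties separately for each of $\mathbf{Bun}_n(B;F)$ and $\mathbf{Fib}(B;F)$. For bundles the arguments reduce to standard facts about fiber bundles over paracompact bases, while for fibrations one uses the parametrized homotopy theory of Hurewicz fibrations due to Dold. The metrizable and ULC hypotheses on $B$ ensure that the products $B \times X$ (for $X$ a compact CW space) remain metrizable and ULC, so that these classical results apply uniformly to all bases of the form $B \times \Delta^k$. The set-theoretic uniqueness demanded by the criterion in the remark is enforced via the universe condition: every total space is a subset of $B \times X \times \calU$, and the fixed bijection $\calU \times \calU \to \calU$ makes gluings canonical.

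For \emph{Amalgamation}, I would apply the criterion from the remark. Uniqueness on objects is immediate from the universe condition: the unique candidate for the glued object is the set-theoretic union inside $(B \times X) \times \calU$. Existence amounts to showing this union is again a bundle (resp.\ a fibration): for bundles it is locally trivial because $\{B \times X_1, B \times X_2\}$ is a closed cover of the paracompact space $B \times X$ with bundle charts on each piece; for fibrations this is Dold's theorem on fibrations over numerable closed covers (where the ULC hypothesis is used to carry out the gluing). The statements for morphisms are handled analogously, e.g., by viewing them as bundles or fibrations over $(B \times X) \times I$.

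For \emph{Straightening}, consider the deformation $r_s(x, t) := (x, (1-s) t)$ of $\Delta^k \times I$, which homotopes the identity to $i \circ p$ through maps that agree on $\Delta^k \times \{0\}$. Pulling back $E$ along $r_s$ yields a homotopy of bundles or fibrations over $\Delta^k \times I$ from $E$ to $p^* i^* E$, constant over $\Delta^k \times \{0\}$. Homotopy invariance (classical for bundles; Dold's fiber-homotopy-equivalence theorem for fibrations) then supplies an isomorphism $E \cong p^* i^* E$ whose restriction at $t = 0$ is the identity of $i^* E$. Taking $F$ and $G$ to be this isomorphism and its inverse supplies the Straightening data.

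For \emph{Fill-in}, I would construct $\bar E$ via a double-mapping-cylinder construction over $\Delta^k \times I$, gluing the mapping cylinder of $\varphi_0$ over $\Delta^k \times [0, 1/2]$ to that of $\varphi_1$ (reversed) over $\Delta^k \times [1/2, 1]$ along the copy of $E$ over $\Delta^k \times \{1/2\}$; the morphism $\Phi \colon \bar E \to p^* E$ then arises from the natural retractions of the mapping cylinders onto $E$. The relative version, requiring $(\bar E, \Phi)$ and $(\bar F, \Psi)$ to agree over a subcomplex $K \subseteq \Delta^k$ times $I$, is handled by first carrying out the construction coherently over $K$ and then extending along the CW cofibration $(K \times I) \cup (\Delta^k \times \{0, 1\}) \hookrightarrow \Delta^k \times I$ via the fiber-homotopy extension property. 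The main technical obstacle lies precisely here: coordinating the extensions and preserving all compatibility data simultaneously for fibrations requires a careful application of Dold's machinery together with attention to the universe bookkeeping; once that is in place, the remaining verifications are formal.
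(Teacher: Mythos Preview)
Your treatment of Amalgamation and Straightening is essentially in line with the paper's (the paper invokes Arnold's local-to-global theorem rather than Dold's open-cover theorem, since the cover $B\times X_1,\,B\times X_2$ is closed and the ULC hypothesis is what makes the closed-cover version work; but this is a matter of attribution, not a mathematical gap).

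The genuine problem is your Fill-in argument for $\mathbf{Fib}(B;F)$. You propose to build $\bar E$ as the double mapping cylinder of $\varphi_0$ and $\varphi_1$, viewed as a space over $B\times\Delta^k\times I$ with the $I$-coordinate being the cylinder direction. But the mapping cylinder of a fiber homotopy equivalence $\varphi\colon E'\to E$ between fibrations over $B\times\Delta^k$ is in general \emph{not} a Hurewicz fibration over $B\times\Delta^k\times I$. A minimal counterexample already occurs over a point: take $E'=\{0\}\hookrightarrow E=[0,1]$; then $\Cyl(\varphi)\to[0,1]$ has fiber a point over $[0,1)$ and fiber $[0,1]$ over $1$, and the path starting at $2\in\Cyl(\varphi)\cong[0,2]$ covering $t\mapsto 1-t$ admits no continuous lift. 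So your $\bar E$ need not be an object of $\mathbf{Fib}(B;F)(\Delta^k\times I)$, and fibrant replacement does not rescue this because Fill-in requires the restrictions over $0$ and $1$ to be \emph{literally} $E_0$ and $E_1$.

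The paper handles this via Tulley's construction: if $i\colon E'\hookrightarrow E$ is a fiberwise strong deformation retract, then $T(i):=E'\times\{0\}\cup E\times(0,1]\subset E\times I$ \emph{is} a fibration over $B\times\Delta^k\times I$ (the deformation retraction lets one push lifts into $E'$ as the $I$-coordinate approaches $0$). For general $\varphi_0,\varphi_1$, one first forms the \emph{fiberwise} mapping cylinder $\Cyl(\alpha)$ of $\alpha\simeq(\varphi_1)^{-1}\varphi_0$, which is a fibration over $B\times\Delta^k$ (not over $B\times\Delta^k\times I$) and into which both $E_0$ and $E_1$ include as acyclic cofibrations; then Tulley applied twice and glued via Amalgamation gives the required $\bar E$. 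Your mapping-cylinder idea is correct for $\mathbf{Bun}_n(B;F)$, where morphisms are isomorphisms, but for $\mathbf{Fib}(B;F)$ it needs to be replaced by this two-step Tulley argument.
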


\begin{proof}
For bundles, the Amalgamation property is classical, and so is the
Straightening property. Fill-ins are given by mapping cylinders. (Strictly
speaking, given a map $\varphi\colon E\to
E'$ over $B$, think of its mapping cylinder as a subset
$E\times [0,1)\cup E'\times \{1\}$ of $B\times I\times \calU$, endowed with
the suitable topology.)

As for fibrations, see Proposition \ref{fibr:glueing_over_base_spaces} for the Amalgamation property. Straightening for fibrations follows from homotopy lifting. The existence of fill-ins for fibrations is proven in Proposition \ref{fibr:fill_in}.
\end{proof}

\begin{cor}\label{strspaces:structure_space_as_homotopy_fiber}
\begin{enumerate}
\item For any fibration $p\colon E\to B$ over a metrizable ULC base space which also is an object of
$\mathbf{Fib}(B;F)$, there is a simplicial
homotopy equivalence 
\[\calS_n(p)_\bullet \to \hofib_p(\Bun_n(B;F)_\bullet\to\Fib(B;F)_\bullet)\]
which is natural in $B$. 
\item We have
\[\Bun_n(*;F)_\bullet\simeq \coprod_{[M]} B\TOP(M)_\bullet,\quad
\Fib(*;F)_\bullet\simeq BG(F)_\bullet,\]
where $\TOP(M)_\bullet$ resp.~$G(F)_\bullet$ denotes the simplicial monoid of
self-homeo\-morphisms resp.~self-homotopy equivalences, and the coproduct ranges
over all isomorphism classes of compact $n$-manifolds homotopy equivalent to
$F$.
\end{enumerate}
\end{cor}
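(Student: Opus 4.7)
The plan is to derive both parts directly from the simplicial-category machinery listed in items (1)--(4) above; the preceding lemma supplies the required Amalgamation, Straightening, and Fill-in properties once $B$ is metrizable and ULC.

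For part (1), let $\mathbf{f}_\bullet\colon\mathbf{Bun}_n(B;F)_\bullet\to\mathbf{Fib}(B;F)_\bullet$ be the natural transformation induced by the inclusion of bundles into fibrations, and apply item (4) at the vertex $p\in\mathbf{Fib}(B;F)_0$. This yields a homotopy fibration sequence
\[N_0(\mathbf{f}_\bullet/p)\lto \Bun_n(B;F)_\bullet\xrightarrow{\mathbf{f}_\bullet}\Fib(B;F)_\bullet.\]
Unwinding the definition, a $k$-simplex of $N_0(\mathbf{f}_\bullet/p)$ consists of a bundle $p'$ in $\mathbf{Bun}_n(B\times\Delta^k;F)$ together with a fiber homotopy equivalence $p'\to p\times\id_{\Delta^k}$, which is precisely the datum of a $k$-simplex of $\calS_n(p)_\bullet=\calS_n(p\times\id_{\Delta^k})_0$ (using that between two fibrations over a common base, any homotopy equivalence over the base is automatically a fiber homotopy equivalence). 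This produces an isomorphism of simplicial sets $\calS_n(p)_\bullet\cong N_0(\mathbf{f}_\bullet/p)$, compatible with pull-back along maps of $B$, and composing with the equivalence from item (4) yields the simplicial homotopy equivalence claimed in part (1), natural in $B$.

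For part (2), specialize to $B=*$ and invoke items (2) and (3) for each of $\mathbf{C}=\mathbf{Bun}_n(*;F)$ and $\mathbf{C}=\mathbf{Fib}(*;F)$; together they produce a homotopy equivalence
\[\coprod_{[c]\in\pi_0 N_0\mathbf{C}_\bullet}N_\bullet\End(c)_\bullet\xrightarrow{\simeq} N_0\mathbf{C}_\bullet.\]
In the bundle case, every bundle over $\Delta^k$ is trivializable, so two $0$-simplices $M_0,M_1$ are joined by an edge in $\Bun_n(*;F)_\bullet$ iff they are homeomorphic; hence $\pi_0$ is the set $\{[M]\}$ of isomorphism classes of compact $n$-manifolds homotopy equivalent to $F$. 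For a fixed such $M$, the endomorphisms in degree $k$ of the trivial bundle $\Delta^k\times M$ are maps $\Delta^k\to\Homeo(M)$, so $\End(M)_\bullet=\TOP(M)_\bullet$ and $N_\bullet\End(M)_\bullet=B\TOP(M)_\bullet$. In the fibration case, any two $0$-simplices are connected by an edge via the mapping-cylinder construction supplied by the Fill-in property, so $\pi_0$ collapses to a single point; the endomorphism simplicial monoid of the trivial fibration is $G(F)_\bullet$, giving $\Fib(*;F)_\bullet\simeq BG(F)_\bullet$.

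The substantive content has already been packaged in the preceding lemma, in particular the Fill-in property for fibrations (proved via the appendix); beyond invoking this, the present corollary is essentially bookkeeping---matching $k$-simplices on the two sides of the claimed equivalences and identifying the endomorphism simplicial monoids with the classical objects $\TOP(M)_\bullet$ and $G(F)_\bullet$.
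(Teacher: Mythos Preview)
Your proof is correct and follows exactly the route the paper intends: the corollary is stated without proof precisely because it is meant to be read off from items (1)--(4) of the Hughes--Taylor--Williams machinery once the preceding lemma supplies the three properties. Your identification $\calS_n(p)_\bullet\cong N_0(\mathbf{f}_\bullet/p)$ via Dold's theorem, and your unpacking of the endomorphism monoids as $\TOP(M)_\bullet$ and $G(F)_\bullet$, are the expected verifications.
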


Now we are going to show that we have obtained suitable models for classifying
spaces.

\begin{lem} \label{strspaces:disassembly_map}
Let $X_\bullet$ be a locally finite simplicial set.
\begin{enumerate}
\item There are natural simplicial isomorphisms
\begin{align*}
\Bun_n(\vert X_\bullet\vert;F)_\bullet &\cong
\map_\bullet(X_\bullet,\Bun_n(*;F)_\bullet)\\
\Fib(\vert X_\bullet\vert;F)_\bullet &\cong \map_\bullet(X_\bullet,\Fib(*;F)_\bullet)
\end{align*}
\item For any fibration $p\colon E\to B$, with $B$ the geometric realization of
a locally finite simplicial set $B_\bullet$, there is a natural simplicial
isomorphism
\[\hofib_p(\Bun_n(B;F)_\bullet\to\Fib(B;F)_\bullet) \to
\lift{\Bun_n(*;F)_\bullet}{B_\bullet}{p}{\Fib(*;F)_\bullet}.\]
\end{enumerate}
\end{lem}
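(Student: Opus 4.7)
The plan is to establish (1) in detail and then to deduce (2) from (1) essentially by unwinding definitions. For (1), I would argue that for every locally finite simplicial set $Y_\bullet$ the natural map sending an object of $\mathbf{Bun}_n(\vert Y_\bullet\vert;F)$ to the system of its restrictions along the characteristic maps of the simplices of $Y_\bullet$ is a bijection onto $\Hom_{sSet}(Y_\bullet,\Bun_n(*;F)_\bullet)$. Applied to $Y_\bullet=X_\bullet\times\Delta^k_\bullet$ (whose realization is $\vert X_\bullet\vert\times\vert\Delta^k\vert$, using local finiteness of $X_\bullet$), this bijection is then readily seen to be a simplicial isomorphism, yielding part (1).

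For finite $Y_\bullet$ I would proceed by induction on the number of non-degenerate simplices. The inductive step attaches a single $m$-simplex $\Delta^m$ along $\partial\Delta^m$, a push-out of compact CW spaces along a subcomplex inclusion; so the Amalgamation property directly identifies an object of $\mathbf{Bun}_n(-;F)$ on the new space with an object on the old space together with an object on $\vert\Delta^m\vert$ matching on the shared boundary, which is precisely the inductive step for compatible systems, i.e.\ for simplicial maps into $\Bun_n(*;F)_\bullet$. The passage to locally finite $Y_\bullet$ then proceeds by expressing $\vert Y_\bullet\vert$ as the colimit of its finite subcomplexes and using that bundles on such a colimit are determined by, and glueable from, their restrictions to compact subcomplexes. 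The argument for $\mathbf{Fib}$ is identical in structure, using Proposition \ref{fibr:glueing_over_base_spaces} in place of classical bundle amalgamation.

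Part (2) then follows quickly from (1): the natural isomorphisms just constructed carry the map $\Bun_n(B;F)_\bullet\to\Fib(B;F)_\bullet$ over to the map of simplicial mapping spaces induced by $\Bun_n(*;F)_\bullet\to\Fib(*;F)_\bullet$, and under this identification the vertex $p$ of the target becomes the classifying map $B_\bullet\to\Fib(*;F)_\bullet$ of the given fibration. The vertical homotopy fiber over this vertex of a map of simplicial mapping spaces is, by construction of $\opn{Lift}$, precisely the lift space appearing on the right-hand side.

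The main obstacle I foresee is the extension from finite to locally finite $Y_\bullet$ in part (1): the Amalgamation property is stated only for a single push-out of compact CW spaces, so its application to the transfinite skeletal filtration requires an extra argument that objects of $\mathbf{Bun}_n$ and $\mathbf{Fib}$ on a locally finite CW complex are determined by, and can be assembled from, their restrictions to compact subcomplexes. This is a standard compact-supports argument, but it is the one point that genuinely needs to be verified; all remaining steps are routine diagram chases.
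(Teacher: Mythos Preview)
Your proposal is correct and follows essentially the same strategy as the paper's proof: define the forward map by pulling back along characteristic maps of simplices, construct the inverse by gluing the bundles over the simplices, invoke Amalgamation in the finite case, and handle the locally finite case via an open cover each of whose members lies in a finite subcomplex. The only packaging difference is that the paper keeps an auxiliary topological parameter $Y$ and proves a single natural bijection $\Bun_n(\vert X_\bullet\vert\times Y;F)_0 \cong \map_0(X_\bullet,\Bun_n(Y;F)_\bullet)$, then lets $Y$ run over $\Delta^k$, whereas you substitute $X_\bullet\times\Delta^k_\bullet$ directly; these are equivalent ways of obtaining the simplicial isomorphism, and the obstacle you flag in the locally finite step is exactly the one the paper resolves by the open-cover remark.
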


\begin{notat}
Here and in the following, when referring to a a space of lifts, we will always implicitly assume that the vertical map has been converted into a fibration (Kan
fibration for simplicial sets or Hurewicz fibration for topological spaces), using the standard path-space construction (between Kan complexes, in the case of simplicial sets). 
\end{notat}

\begin{proof}
(i) We only treat the case of bundles; the other case is completely analogous.
We only need to give a natural bijection 
\[D\colon \Bun_n(\vert X_\bullet\vert\times Y;F)_0 \to
\map_0(X_\bullet,\Bun_n(Y;F)_\bullet)\]
on the level of 0-simplices.

Let $q\colon E\to \vert X_\bullet\vert\times Y$ be a $0$-simplex in the left hand
side. We need to associate to it a simplicial map $X_\bullet\to\Bun_n(Y;F)_\bullet$.
Therefore let $\sigma$ be an $l$-simplex of $X_\bullet$, represented by a
simplicial map $\sigma\colon \Delta^l_\bullet\to X_\bullet$. The pull-back
$(\vert\sigma\vert\times\id_Y)^* q$ is then a bundle over $\Delta^l\times Y$
which defines an $l$-simplex in $\Bun_n(Y;F)_\bullet$.

Here is a description of the inverse $D'$ of $D$. Let $\phi_\bullet\colon
X_\bullet\to\Bun_n(Y;F)_\bullet$ be a simplicial map. For a nondegenerate
$k$-simplex $\tau$ of $\Bun_n(Y;F)_\bullet$, denote by $E(\tau)$ the total space
of the corresponding bundle over $B(\tau)=Y\times\Delta^k$. 

Now define $E\to \vert X_\bullet\vert\times Y$ to be the bundle
\[\bigcup_\sigma E(\phi(\sigma)) \to \bigcup_\sigma B(\phi(\sigma)),\]
with $\sigma$ ranging over the nondegenerate simplices of $X_\bullet$. If
$X_\bullet$ is finite, then this is a bundle by the Amalgamation property.
Otherwise use the fact that there is an open cover of $\vert X_\bullet\vert$ such
that each element of the cover is contained in a finite simplicial subset.

The total space is canonically a subset of $\vert X_\bullet\vert\times Y\times
\calU$, so the bundle is really a zero simplex in $\Bun_n(\vert X_\bullet\vert\times
Y;F)_\bullet$ and the map $D'$ is a strict inverse of $D$. 

(ii) follows from (i). 
\end{proof}

Let
\[\Fib(B;F):=\vert\Fib(B;F)_\bullet\vert;\quad \Bun_n(B;F):=\vert\Bun_n(B;F)_\bullet\vert\]

\begin{cor}\label{strspaces:structure_space_as_space_of_lifts}
If $B$ is a locally finite ordered simplicial complex, then there is a natural
weak homotopy equivalence
\[\calS_n(p)\to\lift{\Bun_n(*;F)}{B}{p}{\Fib(*;F)}\simeq \lift{\coprod_{[M]} B\TOP(M)}{B}{p}{BG(F)}.\]
\end{cor}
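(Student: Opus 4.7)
The strategy is to chain together the previously established results: use Corollary \ref{strspaces:structure_space_as_homotopy_fiber} to identify $\calS_n(p)$ with a homotopy fiber, then apply Lemma \ref{strspaces:disassembly_map} to reinterpret that homotopy fiber as a space of lifts, and finally substitute the identifications of $\Bun_n(*;F)$ and $\Fib(*;F)$ provided by part (ii) of Corollary \ref{strspaces:structure_space_as_homotopy_fiber}.

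First, I observe that a locally finite ordered simplicial complex is in particular a locally finite CW complex, hence metrizable and ULC, so the hypotheses of Corollary \ref{strspaces:structure_space_as_homotopy_fiber} are satisfied. Moreover, $B = |B_\bullet|$ for a locally finite simplicial set $B_\bullet$ (the ordered structure lets us view the simplicial complex as a simplicial set without zig-zags). Thus Corollary \ref{strspaces:structure_space_as_homotopy_fiber}(i) provides a natural simplicial homotopy equivalence
\[\calS_n(p)_\bullet \longrightarrow \hofib_p\bigl(\Bun_n(B;F)_\bullet \to \Fib(B;F)_\bullet\bigr),\]
and Lemma \ref{strspaces:disassembly_map}(ii) identifies the target with the simplicial lift space
\[\lift{\Bun_n(*;F)_\bullet}{B_\bullet}{p}{\Fib(*;F)_\bullet}.\]
Composing these maps and passing to geometric realizations yields the required weak homotopy equivalence
\[\calS_n(p) \longrightarrow \lift{\Bun_n(*;F)}{B}{p}{\Fib(*;F)}.\]
The second equivalence in the statement is then immediate from part (ii) of Corollary \ref{strspaces:structure_space_as_homotopy_fiber}, which identifies $\Bun_n(*;F)_\bullet \simeq \coprod_{[M]} B\TOP(M)_\bullet$ and $\Fib(*;F)_\bullet \simeq BG(F)_\bullet$, together with the homotopy invariance of the space of lifts under equivalences of the ``target'' fibration.

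The main technical point to check carefully is the behavior of spaces of lifts under geometric realization: since lifts are defined only after converting the vertical map into a fibration, one has to verify that realizing the simplicial Kan-fibration replacement produces a Hurewicz fibration in the topological sense, so that the simplicial and topological lift spaces agree up to weak equivalence. This is standard (realization preserves Kan fibrations up to Serre fibrations, and for spaces with the homotopy type of CW complexes weak and strong notions of lift spaces coincide), but it is where one has to be most careful; the local finiteness of $B$ enters precisely here, ensuring compactness of simplices and hence commutation of realization with the relevant mapping space constructions used implicitly in Lemma \ref{strspaces:disassembly_map}. Naturality in $B$ is inherited at each step from the naturality statements already built into Corollary \ref{strspaces:structure_space_as_homotopy_fiber} and Lemma \ref{strspaces:disassembly_map}.
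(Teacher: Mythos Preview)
Your proposal is correct and follows precisely the route the paper intends: the corollary is stated without proof because it is meant to be an immediate consequence of chaining Corollary~\ref{strspaces:structure_space_as_homotopy_fiber}(i), Lemma~\ref{strspaces:disassembly_map}(ii), and Corollary~\ref{strspaces:structure_space_as_homotopy_fiber}(ii), exactly as you do. Your extra paragraph on realization of lift spaces is more caution than the paper exercises, but it does no harm.
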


\begin{rem}
There is still a weak homotopy equivalence, well-defined up to homotopy, if $B$ is
homotopy equivalent to a locally finite ordered simplicial complex, as both
domain and target are homotopy invariant. 
\end{rem}

\begin{defn}\label{strspaces:universal_bundles}	
\begin{enumerate}
\item  The \emph{universal ``bundle''} over $\tilde\calB^n:=\Bun_n(B;F)$ is the map
\[\tilde\calP^n\colon\tilde \calE^n\to\tilde \calB^n\]
which corresponds to the identity map on $\Bun_n(B;F)_\bullet$ under the construction in the proof of Lemma \ref{strspaces:disassembly_map}. It is a bundle over every locally finite subcomplex of $\tilde \calB^n$.
\item The \emph{universal ``bundle''} over $\calB:=\Fib(B;F)$ is the map
\[\calP\colon \calE\to \calB\]
which corresponds to the identity map on $\Fib(B;F)_\bullet$ under the construction in the proof of Lemma \ref{strspaces:disassembly_map}. It is a fibration over every locally finite subcomplex of $\calB$.
\end{enumerate}
\end{defn}


\section{Review of the parametrized \texorpdfstring{$A$}{A}-theory characteristic}\label{section:DWW}

Let $p\colon E\to B$ be a fibration with homotopy finitely dominated fibers. Given such a fibration, and assuming that $B=\vert B_\bullet\vert$ is the geometric realization of a simplicial set, Dwyer-Weiss-Williams \cite{Dwyer-Weiss-Williams(2003)} associate the parametrized $A$-theory characteristic
\[\chi(p)\in\holim_{\sigma\in\simp B_\bullet}A(E_\sigma).\]
Here $\simp B_\bullet$ is the category of simplices of the simplicial set $B_\bullet$ and $E_\sigma=\vert\sigma\vert^*E$, the pull-back of $E$ to a fibration over the simplex $\vert\sigma\vert$. See the first subsection below for the models of $A$-theory that will be used. The definition of $\chi(p)$ will be reviewed after that.

The homotopy limit can be understood, up to homotopy, as a space of sections of a fibration over $B$ which is obtained from $p$ by applying the $A$-theory functor fiberwise:
\[\holim_{\sigma\in\simp B_\bullet}A(E_\sigma)\simeq\sections{A_B(E)}{B}.\]
See the third subsection below for this identification. 

If $p$ has the structure of a fiber bundle, with fibers compact topological manifolds, then $\chi(p)$ may be lifted over the fiberwise assembly map to an ``excisive characteristic'' $\chi^\%(p)$. This is briefly described in the forth subsection below. We conclude by some remarks on naturality.

\subsection{The models for $A(X)$ and $A^\%(X)$}
$\Rfd(X)$ denotes the Waldhausen category of homotopy finitely dominated spaces over $X$, with $w\Rfd(X)$ the subcategory of weak equivalences. By definition, $A(X):=K(\Rfd(X))$ is the algebraic $K$-theory space of that category (which is actually an infinite loop space). There is a natural transformation
\[\vert w\Rfd(X)\vert\to A(X)\]
reminiscent of the group completion, which will play an important role.

By $A^\%(X)$ we will always mean the homology theory of $X$ with coefficients in (the spectrum) $A(*)$, and by $\alpha\colon A^\%(X)\to A(X)$ the assembly map. The model for $A^\%(X)$ that will be used is the one from \cite[page 57]{Dwyer-Weiss-Williams(2003)} (see also \cite[Definition 2.3]{Badzioch-Dorabiala(2007)}), which is
\[A^\%(X)=\holim\bigl(A(X) \rightarrow K(\calR^{ld}(\mathbb{J}X)) \leftarrow K(\mathcal{V}(X))\bigr)\]
which will not be explained here. In any case the assembly transformation $A^\%(X)\to A(X)$ is given by the obvious projection. As in \cite{Badzioch-Dorabiala(2007)}, the Waldhausen category $\calR^\%(X)$ will denote the pull-back of categories
\[\Rfd(X) \to \calR^{ld}(\mathbb J X) \leftarrow \mathcal V(X).\]
It has the feature that it comes with a natural map
\[\vert w\calR^\%(X)\vert \to A^\%(X)\]
under which the projection to $w\Rfd(X)$ corresponds to the assembly map.

\subsection{Background on characteristics}\label{subsection:background_on_characteristics}
Denote by $\cat$ the category of small categories. Given a small category $\calC$ and a functor $F\colon \calC\to\cat$, a \emph{characteristic} for $F$ is a natural transformation
\[\chi\colon \calC/? \to F\]
where $\calC/c$ is the over category: An object is a morphism $d\to c$ in $\calC$ and a morphism is a commutative triangle. 

Unraveling the definitions, it is not hard to see that $\chi$ is given by the following data:
\begin{enumerate}
\item For each object $c$ of $\calC$, a ``characteristic object'' $c^!\in F(c)$, which corresponds to the image of the identity morphism on $c$ under the functor $\chi(c)$, and
\item For each morphism $\varphi\colon c\to d$ in $\calC$, a morphism $\varphi^!\colon \varphi_*(c^!)\to d^!$, satisfying the cocycle condition $(\psi\varphi)^!=\psi^!\circ \psi_*(\varphi^!)$. 
\end{enumerate}

In our context there are basically two ways how such characteristics occur:
\begin{enumerate}
\renewcommand{\labelenumi}{(\Alph{enumi})}
\item Let $f$ be a space-valued functor on $\calC$ such that all objects are mapped to homotopy finitely dominated spaces, and all morphisms are mapped to homotopy equivalences. From $f$ we obtain a characteristic for the functor $w\Rfd\circ f$ as follows: The  characteristic object is
\[c^!=f(c)\times S^0\in\Rfd(f(c)),\]
considered as a retractive space over $f(c)$. If $\varphi\colon c\to d$ is a morphism of $\calC$, then there is an obvious weak equivalence of retractive spaces
\[\varphi^!\colon \varphi_* c^! = f(c) \amalg f(d) \to d^! = f(d) \amalg f(d)\]
which is easily seen to satisfy the cocycle condition.

If $p\colon E\to B$ is as above, then this procedure can be applied to $\calC=\simp B_\bullet$, the category of simplices of the simplicial set $B_\bullet$. By definition, an object of $\simp B_\bullet$ is a simplex of $B$, i.e.~a simplicial map $\sigma\colon\Delta^n_\bullet\to B_\bullet$ for some $n$. A morphism from $\sigma$ to $\sigma'$ is a map $f\colon \Delta^n_\bullet\to\Delta^{n'}_\bullet$ such that $\sigma'\circ f=\sigma$. The category $\simp B_\bullet$ is the domain of a space-valued functor $f$ by
\[f(\sigma):=E_\sigma:=\vert\sigma\vert^*E.\]
The above procedure applies to give a characteristic for $F(\sigma)=w\Rfd(E_\sigma)$.

\item If $f$ as in (A) maps all objects even to compact ENRs, and maps all morphisms to cell-like maps between these, then we obtain a characteristic for the functor $w\calR^\%\circ f$. After applying the transformation $w\calR^\%\to w\calR$, one gets back the characteristic from (A).
\end{enumerate}

Now let $G\colon \calC\to\CGHaus$ be a space-valued functor. Then a characteristic for $G$ is defined to be a natural transformation
\[\chi\colon \vert\calC/?\vert\to G.\]
Obviously the geometric realization of a characteristic on a functor $F\colon\calC\to\cat$ as above defines a characteristic on $\vert F\vert$. In our construction (A) above, recall that there is a natural map
\[\vert w\Rfd(E_\sigma)\vert\to A(E_\sigma)\]
reminiscent of the group completion. Hence, by composition of natural transformations, we also obtain a characteristic for the functor
\[\sigma\mapsto A(E_\sigma).\]

The space of all characteristics for $G$, endowed with its canonical topology, is by definition the homotopy limit $\holim G$. In summary, we may define:

\begin{defn}
The \emph{parametrized $A$-theory characteristic} of $p$ is the element
\[\chi(p)\in \holim_{\sigma\in\simp B_\bullet} A(E_\sigma)\]
which corresponds to the characteristic obtained by geometric realization and ``group com\-pletion'' of construction (A) above.
\end{defn}

\subsection{Background on homotopy limits and section spaces}
For a functor $F$ from a small category $\calC$ to spaces,  a point $\chi\in\holim F$ is a characteristic for $F$, i.e.~a natural transformation $\vert \calC/?\vert\to F$. Such a natural transformation induces a lift
\[\xymatrix{
&& {\hocolim F} \ar[d] \\
{\hocolim \vert\calC/?\vert} \ar[rr]^{\pi_*} \ar@{.>}[rru]^{\chi_*} && {\hocolim *}
= \vert\calC\vert
}\]
of the canonical projection $\pi_*$.

As each of the spaces $\vert\calC/c\vert$ is contractible, the map $\pi_*$ is a homotopy equivalence; it induces a homotopy equivalence
\[
\pi^*\colon \sections{\hocolim F}{\vert \calC\vert}\xrightarrow{\simeq}
\lift{\hocolim F}{\hocolim\vert\calC/?\vert}{}{\vert\calC\vert}.
\]
(Recall that in our notation, all vertical maps have been converted into fibrations.) 

If $F$ sends all morphisms to homotopy equivalences, the projection map from $\hocolim F$ to $\vert\calC\vert$ is a quasifibration. In other words, if $c$ is an object of $\calC$, then the inclusion of $F(c)$ into the homotopy fiber over $c\in\vert\calC\vert$ of the projection map is a weak homotopy equivalence. One can show that in this case the above construction produces a zigzag of weak homotopy equivalences
\begin{equation}\label{whtors:weak_h_eq_from_holim_to_section_space}
\holim F\simeq\sections{\hocolim F}{\vert\calC\vert}.
\end{equation}

In the case of the parametrized $A$-theory characteristic, $\calC=\simp B_\bullet$, so $\vert \calC\vert\simeq B$ by Kan's last vertex map. The functor $F$ sends a simplex $\sigma\in\simp B_\bullet$ to the space $E_\sigma:=\vert\sigma\vert^*E$, the total space of the pull-back of $E$ over $\sigma$. In this situation the weak homotopy equivalence from \eqref{whtors:weak_h_eq_from_holim_to_section_space} takes the form
\[\holim_{\sigma\in\simp B_\bullet} A(E_\sigma) \simeq \sections{A_B(E)}{B}\] 
where, by definition, the map $A_B(E)\to B$ is the fibration associated with the composite $\hocolim F\to\vert\calC\vert\to B$, with fiber over $b\in B$ homotopy equivalent to $A(p^{-1}(b))$.

\subsection{The excisive characteristic}

If the fibration $p$ happens to be a bundle with fibers compact (not necessarily closed) topological manifolds, Dwyer-Weiss-Williams also define an excisive characteristic. Informally, it can be understood as a refinement of $\chi(p)$ in the sense that it defines, up to homotopy, an element in 
\[\chi^\%(p)\in\lift{A^\%_B(E)}{B}{\chi(p)}{A_B(E)}\]
where the vertical map is the fiberwise assembly map $\alpha$. Thus it defines a section, denoted by $\chi_e(p)$, of the fibration over $B$ obtained from $p$ by applying the functor $A^\%$ fiberwise, together with a path from $\alpha\chi_e(p)$ to $\chi(p)$. In the homotopy limit language this corresponds to an element in the homotopy fiber over $\chi(p)$ of the map
\begin{equation}\label{whtors:fiberwise_assembly_map}
\alpha\colon \holim_{\sigma\in\simp B_\bullet} A^\%(E_\sigma) \to \holim_{\sigma\in\simp B_\bullet} A(E_\sigma)
\end{equation}
induced by the assembly transformation.

Formally, the excisive characteristic is an element in yet another homotopy limit space we are going to define now. Let $tB_\bullet$ be the simplicial set where an $n$-simplex is an $n$-simplex $\sigma$ of $B_\bullet$, together with an equivalence relation $\theta$ on $E_\sigma:=\vert\sigma\vert^*E$, with quotient space $E_\sigma^\theta$, such that the projections induce a homeomorphism $E_\sigma\to \Delta^n\times E_\sigma^\theta$ over $\vert\sigma\vert\cong\Delta^n$. Then the functor from $\simp tB_\bullet$ to spaces, sending $(\sigma,\theta)$ to $E_\sigma^\theta$, maps all objects to compact ENRs and all morphisms to homeomorphisms (which are cell-like maps). Applying the construction (B) from above,  we obtain a characteristic for the functor
\[\simp tB_\bullet\to\cat, \quad (\sigma,\theta) \mapsto w\calR^\%(E_\sigma^\theta).\]

\begin{defn}
The \emph{excisive characteristic} of $p$ is the element
\[\chi_e(p)\in\holim_{(\sigma,\theta)} A^\%(E_\sigma^\theta)\]
obtained from the characteristic above by geometric realization and the natural transformation $\vert w\calR^\%(X)\vert\to A^\%(X)$.
\end{defn}

The images of $\chi(p)$ and $\chi_e(p)$ in $\holim_{(\sigma,\theta)}A(E^\theta_\sigma)$ are related by a canonical path. We obtain therefore an element  
\[\chi^\%(p)\in\holim\bigl(\holim_\sigma A(E_\sigma)\to \holim_{(\sigma,\theta)} A(E^\theta_\sigma) \leftarrow \holim_{(\sigma,\theta)} A^\%(E^\theta_\sigma)\bigr)
\]
which projects to $\chi(p)\in\holim_\sigma A(E_\sigma)$.

Now the projections 
\[\holim_\sigma A^\%(E_\sigma)\to\holim_{(\sigma,\theta)} A^\%(E_\sigma^\theta),\quad \holim_\sigma A(E_\sigma)\to\holim_{(\sigma,\theta)} A(E_\sigma^\theta)\]
are homotopy equivalences (this deep fact uses \cite{McDuff(1980)}, see \cite[Theorem 2.5]{Dwyer-Weiss-Williams(2003)}). So we may consider $\chi_e(p)$ as an element in the homotopy fiber of \eqref{whtors:fiberwise_assembly_map} over $\chi(p)$, well-defined up to homotopy.

\subsection{Some remarks on naturality}\label{subsection:some_remarks_on_naturality}

Suppose that, given a fibration $p\colon E\to B$ as above, the space $E$ is a subset of $B\times \calU$ and that the map $p$ corresponds to the projection from $B\times\calU$ to $B$. 

Let $f_\bullet\colon B'_\bullet\to B_\bullet$ be a simplicial map inducing $B'\to B$ on the geometric realization. Then $f_\bullet$ induces a functor $\simp B'_\bullet\to\simp B_\bullet$ so that we get a restriction map
\[f^*\colon \holim_{\sigma\in \simp B_\bullet} A(E_\sigma) \to\holim_{\sigma\in\simp B'_\bullet} A(E_\sigma)\]
If we use the convention that $E_\sigma$ is considered as a subset of $\vert\sigma\vert\times \calU$, then we have
\[f^*\chi(p)=\chi(p')\]
where $p'\colon f^*E\to B'$ is the pull-back fibration such that $f^*E$ is a subset of $B'\times \calU$. This follows immediately from the definitions, since $E'_\sigma = E_\sigma$.

If $p$ is a bundle of compact manifolds, then the same naturality statement holds for the excisive characteristic
\[\chi_e(p)\in\holim_{(\sigma,\theta)\in\simp tB_\bullet} A^\%(E_\sigma^\theta).\]

From now on we will tacitly assume that all bundles and fibrations $E\to B$ are in fact objects of the categories $\mathbf{Bun}_n(B;F)$ or $\mathbf{Fib}(B;F)$ as defined in section \ref{strspaces}, so that in particular the total spaces come as subsets of $B\times\calU$.

\begin{lem}\label{DWW:naturality_of_A_theory_characteristic}
Suppose that $\varphi\colon p\to p'$ is a fiber homotopy equivalence between fibrations for which the $A$-theory characteristic is defined. Then there is a canonical path
\[\varphi_*\chi(p)\rightsquigarrow \chi(p')\]
between the $A$-theory characteristics, where
\[\varphi_*\colon\holim_{\sigma} A(E_\sigma)\to\holim_{\sigma} A(E'_\sigma)\]
is the obvious map.
\end{lem}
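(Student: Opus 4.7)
The fiber homotopy equivalence $\varphi\colon p\to p'$ pulls back under every simplex $\sigma\colon\Delta^n_\bullet\to B_\bullet$ to a homotopy equivalence $\varphi_\sigma\colon E_\sigma\to E'_\sigma$ over $|\sigma|$. By functoriality of the pull-back, these are compatible with the restriction maps $f_E\colon E_\sigma\to E_{\sigma'}$ and $f_{E'}\colon E'_\sigma\to E'_{\sigma'}$ induced by morphisms $f\colon \sigma\to\sigma'$ in $\simp B_\bullet$, in the sense that $\varphi_{\sigma'}\circ f_E=f_{E'}\circ\varphi_\sigma$. Unwinding construction (A) in subsection \ref{subsection:background_on_characteristics} together with the naturality discussion in subsection \ref{subsection:some_remarks_on_naturality}, I would first identify $\varphi_*\chi(p)$ explicitly: at the categorical level it is realized by the characteristic sending $\sigma$ to the pushforward retractive space $\varphi_{\sigma,*}\sigma^!=E'_\sigma\amalg E_\sigma$ over $E'_\sigma$ (retraction: identity on the first summand, $\varphi_\sigma$ on the second), with structural morphism over $f\colon\sigma\to\sigma'$ given by $\id\amalg f_E$.

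The next step is to define, for each $\sigma$, a weak equivalence in $w\Rfd(E'_\sigma)$
\[\tau_\sigma\colon \varphi_{\sigma,*}\sigma^!=E'_\sigma\amalg E_\sigma\longrightarrow(\sigma')^!=E'_\sigma\amalg E'_\sigma,\]
namely the identity on the first summand and $\varphi_\sigma$ on the second. The required naturality square for a morphism $f\colon\sigma\to\sigma'$ then commutes on the first summand trivially and on the second summand precisely by the identity $f_{E'}\circ\varphi_\sigma=\varphi_{\sigma'}\circ f_E$. Thus the $\tau_\sigma$ assemble into a natural transformation of characteristics for the functor $\sigma\mapsto w\Rfd(E'_\sigma)$ from the one realizing $\varphi_*\chi(p)$ to the one realizing $\chi(p')$.

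Finally, I would invoke the standard principle that a natural transformation of characteristics yields a simplicial homotopy between the corresponding maps $|\simp B_\bullet/?|\to|w\Rfd(E'_?)|$ on geometric realizations, hence a path in $\holim_\sigma |w\Rfd(E'_\sigma)|$. Post-composing with the group completion transformation $|w\Rfd(E'_?)|\to A(E'_?)$ then produces the required canonical path $\varphi_*\chi(p)\rightsquigarrow\chi(p')$ in $\holim_\sigma A(E'_\sigma)$.

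The construction is essentially bookkeeping; the one point that requires care is verifying that the pushforward $\varphi_*\chi(p)$ really has the explicit description above, since this depends on tracking how the structural morphisms $f^!_\sigma$ of the characteristic transform under the restriction operation $\varphi_*$ on the homotopy limit. Once that identification is made, the compatibility $f_{E'}\varphi_\sigma=\varphi_{\sigma'}f_E$ furnished by $\varphi$ being a map over $B$ supplies naturality of $\tau$ and hence the homotopy, so no further substantial obstacle remains.
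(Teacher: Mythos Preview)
Your proposal is correct and follows essentially the same idea as the paper's proof. The paper packages it more slickly: rather than explicitly identifying $\varphi_*\chi(p)$ and building the natural transformation $\tau$, it observes that the assignment $(\sigma,0)\mapsto E_\sigma$, $(\sigma,1)\mapsto E'_\sigma$ (with $\varphi_\sigma$ for the morphism $0\to1$) defines a space-valued functor on $\simp B_\bullet\times[1]$ to which construction~(A) applies directly, yielding an element $\chi_\varphi\in\holim G=\holim\bigl(\holim_\sigma A(E_\sigma)\xrightarrow{\varphi_*}\holim_\sigma A(E'_\sigma)\bigr)$ which \emph{is} the desired path---thereby sidestepping the bookkeeping you flag as the one delicate point.
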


\begin{proof}
The natural transformation $w\Rfd(E_\sigma)\to w\Rfd(E'_\sigma)$ may be viewed as a functor $G$ on the category $\simp B_\bullet\times [1]$ for which we may apply construction (A) again, since it sends all morphisms to homotopy equivalences. By geometric realization and ``group completion'' we obtain an element
\[\chi_\varphi\in\holim G=\holim\bigr(\holim_\sigma A(E_\sigma)\xrightarrow{\varphi_*} \holim_\sigma A(E'_\sigma)\bigl).\]
Such an element is given by a path from $\varphi_*\chi(p)$ to $\chi(p')$.
\end{proof}

See Lemma \ref{strspaces:lax_naturality_of_excisive_characteristic} for an analogous statement for the excisive characteristic.


\section{The parametrized Whitehead torsion}\label{whtors}

The goal of this section is to define a \emph{parametrized Whitehead torsion}
\[\tau\colon \calS_n(p)\to \sections{\Omega\Wh_B(E)}{B}\]
whenever $p\colon E\to B$ is a bundle of compact topological manifolds. Here the right-hand side is the space of sections of a fibration over $B$ which is obtained from $p$ by applying $\Omega\Wh$ fiberwise, as defined by \cite{Dwyer-Weiss-Williams(2003)} and reviewed in section \ref{section:DWW} above. The symbol $\Wh$ denotes the connective topological Whitehead functor as defined by Waldhausen. In the case where $B$ is a point, the parametrized Whitehead torsion reduces to
\[\tau\colon \calS_n(E)\to\Omega\Wh(E)\]
whose induced map on path components 
\[\pi_0\calS_n(E)\to\Wh(\pi_1(E))\]
sends a homotopy equivalence $f\colon M\to E$ to the classical Whitehead torsion $\tau(f)$ (Appendix \ref{section:unparametrized_case}).

The definition of the parametrized torsion can be outlined as follows: Denote by $\calP\colon\calE\to \calB$ the universal ``bundle'' for fibrations with fiber $F$ and by $\tilde\calP\colon \tilde\calE\to \tilde\calB$ the universal ``bundle'' for fiber bundles with fibers compact $n$-manifolds homotopy equivalent to $F$. (See definition \ref{strspaces:universal_bundles} for the notion of universal ``bundle''.) The parametrized $A$-theory characteristic and its excisive version yield a commutative diagram
\[\xymatrix{
{\tilde{\calB}} \ar[rr]^{\chi(\tilde\calP)} \ar[d] &&  A^\%_\calB(\calE) \ar[d]\\
{\calB} \ar[rr]^{\chi(\calP)} && A_\calB(\calE)
}\]
Hoehn \cite{Hoehn(2009)} used this diagram to produce a map
\[\calS_n(p)\xrightarrow{\simeq}\lift{\tilde\calB}{B}{p}{\calB}\to \lift{A^\%_\mathcal B(\mathcal E)}{B}{\chi(p)}{A_\mathcal B(\mathcal E)},\]
by precomposition. This map will be called \emph{parametrized excisive characteristic} by us. 

While the transition from the abstract homtopy limit spaces to section spaces makes this definition quite simple, it is inconvenient for our purposes. In fact the infinite loop space structure which is present in $K$-theory gets lost in the section space model. Therefore our first task will be to redescribe Hoehn's map completely within the homotopy limit picture:
\[\chi^\%\colon \calS_n(p)\to \hofib_{\chi(p)}\biggl(\holim_{\sigma\in\simp B_\bullet} A^\%(E_\sigma) \to \holim_{\sigma\in\simp B_\bullet} A(E_\sigma)\biggr).\] 

With this definition of the parametrized excisive characteristic, the parametrized Whitehead torsion is just one step away. In fact, if $p$ is a bundle of compact $n$-dimensional manifolds, we can subtract $\chi^\%(p)$ from the map $\chi^\%$ to get
\begin{multline*}\tau\colon\calS_n(p)\to \hofib_* \biggl(\holim_{\sigma\in\simp B_\bullet} A^\%(E_\sigma) \to \holim_{\sigma\in\simp B_\bullet} A(E_\sigma)\biggr) \\
\simeq \holim_{\sigma\in\simp B_\bullet} \Omega\Wh(E_\sigma) \simeq \sections{\Omega\Wh_\sigma(E_\sigma)}{B}.
\end{multline*}

Hence,
\[\bar R(p):=\holim_{\sigma\in\simp B_\bullet} \Omega\Wh(E_\sigma)\]
is the infinite loop space mentioned in the introduction. It is the space the parametrized Whitehead torsion is actually taking values in. When no confusion is possible, we will implicitly identify $\bar R(p)$ with the section space since this one may be considered to be the more intuitive one.

\subsection{Pull-back in families}

The goal of this subsection is to describe the technical framework which is necessary for the definition of the parametrized excisive characteristic $\chi^\%$. Recall that Hoehn's definition of the parametrized excisive characteristic used the Dwyer-Weiss-Williams construction in a universal situation. This will also be our guideline; however to avoid the passage to section spaces, a technically more complicated construction is necessary.

For notational convenience we remain in the abstract setting. The basic idea of this subsection is the following observation:

\begin{obs}\label{whtors:observation1}
Let $\calB_\bullet$ be Kan, $F\colon\simp \calB_\bullet\to\CGHaus$ be a functor that sends all morphisms to homotopy equivalences. Let $f,g\colon B_\bullet\to\calB_\bullet$ be homotopic via a homotopy $H$. Then there is a zig-zag of weak homotopy equivalences
\[\holim(F\circ f_*) \underset{H}{\simeq} \holim(F\circ g_*)\]
where $f_*$ and $g_*$ are the induced functors on the categories of simplices.

If $K$ is another homotopy from $g$ to $h$, then the zig-zag induced by a concatenated homotopy $H*K$ agrees with the composite zig-zag of weak homotopy equivalences.
\end{obs}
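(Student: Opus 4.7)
The plan is to construct the zig-zag through the homotopy itself. View $H$ as a simplicial map $H\colon B_\bullet\times\Delta^1_\bullet\to\calB_\bullet$; it induces a functor $H_*\colon\simp(B_\bullet\times\Delta^1_\bullet)\to\simp\calB_\bullet$. The two endpoint inclusions $i_0,i_1\colon B_\bullet\to B_\bullet\times\Delta^1_\bullet$ satisfy $H\circ i_0=f$ and $H\circ i_1=g$, and the induced functors on simplex categories yield restrictions assembling into
\[
\holim(F\circ f_*)\xleftarrow{i_0^*}\holim(F\circ H_*)\xrightarrow{i_1^*}\holim(F\circ g_*).
\]
This will be the zig-zag claimed by the statement.

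The main step is to show that both arrows are weak equivalences. I would first establish the general fact that if $j\colon\calC\to\mathcal{D}$ is a functor between small categories whose nerve is a weak equivalence, and if $G\colon\mathcal{D}\to\CGHaus$ sends every morphism to a homotopy equivalence, then $j^*\colon\holim_\mathcal{D} G\to\holim_\calC(G\circ j)$ is a weak equivalence. This is immediate from the section-space description \eqref{whtors:weak_h_eq_from_holim_to_section_space}: both sides are section spaces of quasifibrations over $|\mathcal{D}|$ and $|\calC|$, and $j^*$ corresponds to the pullback of sections along $|j|$, which is an equivalence when $|j|$ is. Apply this with $j=(i_k)_*$ for $k=0,1$: by Kan's last-vertex map $|\simp X_\bullet|\simeq|X_\bullet|$ naturally in $X_\bullet$, so the hypothesis on $j$ reduces to the obvious fact that the inclusions $|B_\bullet|\hookrightarrow|B_\bullet|\times I$ are homotopy equivalences. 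Note that $F\circ H_*$ does send every morphism to a weak equivalence, because every morphism of $\simp(B_\bullet\times\Delta^1_\bullet)$ is sent by $H_*$ into $\simp\calB_\bullet$, on which $F$ is homotopy-invariant by assumption.

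For compatibility with concatenation, I would use that $\calB_\bullet$ is Kan: the two homotopies $H$, $K$ together with a chosen concatenation $H*K$ appear as the three outer edges of a filler $\widetilde H\colon B_\bullet\times\Delta^2_\bullet\to\calB_\bullet$. The three edge inclusions induce restrictions from $\holim(F\circ\widetilde H_*)$ onto each of $\holim(F\circ H_*)$, $\holim(F\circ K_*)$, and $\holim(F\circ (H*K)_*)$, all weak equivalences by the preceding argument, and each of these further restricts to the three endpoint homotopy limits $\holim(F\circ f_*)$, $\holim(F\circ g_*)$, $\holim(F\circ h_*)$. Commutativity of the resulting diagram expresses, in the homotopy category, the equality of the zig-zag for $H*K$ with the composite of the zig-zags for $H$ and $K$.

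The principal technical obstacle is the restriction lemma for homotopy limits invoked above; it is well known but deserves a line of justification since this observation will be applied repeatedly later. Once one is comfortable translating between the homotopy-limit and the section-space models (as is already done implicitly in subsection \ref{subsection:background_on_characteristics}), the remainder of the argument reduces to formal manipulation of simplicial objects together with the Kan condition.
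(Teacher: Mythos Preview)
Your proposal is correct and follows essentially the same approach as the paper: the paper constructs the identical zig-zag $\holim(F\circ f_*)\leftarrow\holim(F\circ H_*)\rightarrow\holim(F\circ g_*)$ via restriction and justifies the weak equivalences by the section-space interpretation, exactly as you do. Your treatment is in fact more detailed than the paper's, which does not spell out the restriction lemma or the $\Delta^2$-filler argument for concatenation compatibility.
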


In fact, the zig-zag induced by $H\colon B_\bullet\times\Delta^1_\bullet\to\calB_\bullet$ is given by
\[\holim(F\circ f_*) \leftarrow \holim(F\circ H_*) \rightarrow \holim(F\circ g_*)\]
where the maps are given by restriction; they are weak homotopy equivalences by the section space interpretation.

Suppose now that we are given two maps of simplicial sets
\[f\colon \tilde\calB_\bullet\to\calB_\bullet, \quad p\colon B_\bullet\to \calB_\bullet\]
where $\calB_\bullet$ and $\tilde\calB_\bullet$ are Kan, and abbreviate
\[\calS := \lift{\tilde\calB_\bullet}{B_\bullet}{p}{\calB_\bullet}.\]
Let $F, F'\colon\simp \calB\to\CGHaus$ be two functors which send all morphisms to weak homotopy equivalences and $\alpha\colon F'\to F$ a natural transformation.

(In our main example, the spaces $\calB$ and $\tilde\calB$ are classifying spaces and $\calS\simeq\calS_n(p)$. The functors $F'$ and $F$ will send $\sigma$ to $A^\%(E_\sigma)$ and $A(E_\sigma)$ respectively, and $\alpha$ will be the assembly transformation.)

\begin{defn}
A \emph{characteristic pair} is an element $\chi\in\holim F$ together with a lift $\chi_e\in\holim(F'\circ f_*)$ through $\alpha$ of $f^*\chi$. 

A \emph{homotopy} from $(\chi,\chi_e)$ to another characteristic pair $(\chi',\chi'_e)$ is a path $\gamma$ from $\chi$ to $\chi'$ together with a lift of $f^*\gamma$ to a path from $\chi_e$ to $\chi'_e$.
\end{defn}

\begin{prop}\label{whtors:construction_of_excisive_characteristic}
\begin{enumerate}
\item Any characteristic pair $(\chi, \chi_e)$ determines, up to homotopy, a map
\[\kappa_{(\chi, \chi_e)}\colon \calS\to
\hofib_{p^*\chi} \bigl(\holim (F'\circ p_*)\xrightarrow{\alpha} \holim (F\circ p_*)\bigr).
\]
\item If $\gamma$ is a homotopy from $(\chi,\chi_e)$ to $(\chi',\chi'_e)$, then
\[\kappa_{(\chi', \chi'_e)}\simeq t_\gamma\circ \kappa_{(\chi, \chi_e)}\]
where $t_\gamma$ denotes fiber transport along $\gamma$.
\item Let $G,G'\colon\simp\calB\to\CGHaus$ be two more functors and 
\[\xymatrix{
F' \ar[rr]^\alpha \ar[d]^{\varphi} && F \ar[d]^\varphi\\
G' \ar[rr]^\beta && G 
}\]
a commutative square of natural transformations. Then $(\varphi_*\chi, \varphi_*\chi_e)$ is a characteristic pair for $(G,G')$ and
\[\kappa_{(\varphi_*\chi,\varphi_*\chi_e)} \simeq \varphi_*\kappa_{(\chi,\chi_e)}.\]
\item Suppose that $F, F'$ take values in loop spaces and that $\alpha$ is a loop map. If a characteristic pair $(\chi, \chi_e)$ decomposes as a loop sum $(\chi',\chi_e')+(\chi'',\chi''_e)$, then
\[\kappa_{(\chi, \chi_e)}=\kappa_{(\chi', \chi'_e)}+\kappa_{(\chi'', \chi''_e)}.\]
\end{enumerate}
\end{prop}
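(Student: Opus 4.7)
The plan is to carry out the construction parametrically over $\calS$ and read off parts (ii)--(iv) from the functoriality of that construction. After converting the right-hand vertical map of the lift diagram into a fibration, a point of $\calS$ consists of a simplicial map $\tilde p : B_\bullet \to \tilde\calB_\bullet$ together with a homotopy $H : B_\bullet \times \Delta^1_\bullet \to \calB_\bullet$ from $f\tilde p$ to $p$. The map $\kappa_{(\chi,\chi_e)}$ will come about by performing three operations on $(\chi,\chi_e)$ at each such point: pull back along $\tilde p$, transport across the zig-zag of Observation \ref{whtors:observation1} determined by $H$, and package the result as a point of the homotopy fiber over $p^*\chi$.

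In more detail, pulling back gives $\tilde p^*\chi_e \in \holim(F'\circ (f\tilde p)_*)$, whose image under $\alpha$ is $(f\tilde p)^*\chi$. Observation \ref{whtors:observation1}, applied simultaneously to $F'$ and $F$, produces a commutative zig-zag of weak homotopy equivalences
\[
\xymatrix{
\holim(F'\circ (f\tilde p)_*) \ar[d]^\alpha & \holim(F'\circ H_*) \ar[l]^{\simeq} \ar[r]^{\simeq} \ar[d]^\alpha & \holim(F'\circ p_*) \ar[d]^\alpha \\
\holim(F\circ (f\tilde p)_*) & \holim(F\circ H_*) \ar[l]^{\simeq} \ar[r]^{\simeq} & \holim(F\circ p_*)
}
\]
across which we transport $\tilde p^*\chi_e$ to an element in $\holim(F'\circ p_*)$; since the restriction $H^*\chi \in \holim(F\circ H_*)$ of $\chi$ along $H$ provides a canonical lift of $(f\tilde p)^*\chi$ that projects to $p^*\chi$, compatibility of $\alpha$ with the horizontal zig-zag yields a canonical path from the $\alpha$-image of the transported element to $p^*\chi$. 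This element together with the path defines a point of the homotopy fiber over $p^*\chi$, and performing the entire construction over the universal $(\tilde p, H)$ parametrized by $\calS$ yields the desired $\kappa_{(\chi,\chi_e)}$.

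For (ii), a homotopy $\gamma$ between characteristic pairs can be viewed as a one-parameter family of characteristic pairs; running the construction of (i) parametrically in this family yields, upon restriction at the two endpoints, a homotopy between $\kappa_{(\chi,\chi_e)}$ and $\kappa_{(\chi',\chi'_e)}$ that by construction covers fiber transport along $\gamma$. Part (iii) is immediate from the observation that pull-back along $\tilde p$, the zig-zag transport, and the assembly $\alpha$ are each natural in the ambient transformation of functors, so $\varphi_* \circ \kappa_{(\chi,\chi_e)} \simeq \kappa_{(\varphi_*\chi, \varphi_*\chi_e)}$. Part (iv) uses that under the loop-space hypotheses $\holim(F'\circ p_*)$, $\holim(F\circ p_*)$, and the homotopy fiber are loop spaces and $\alpha$ is a loop map, while each step of the construction is a loop map in the variable $(\chi,\chi_e)$; hence $\kappa$ respects the additive structure.

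The main obstacle is a coherence one: transport across the zig-zag of Observation \ref{whtors:observation1} depends on the choice of homotopy $H$, so the construction of $\kappa_{(\chi,\chi_e)}$ is only defined once that choice is made. The concatenation-of-homotopies clause of Observation \ref{whtors:observation1} is precisely what is needed: it guarantees that different choices of $H$ produce compatibly homotopic transports, so that $\kappa_{(\chi,\chi_e)}$ is well-defined up to homotopy in (i) and the connecting homotopy in (ii) is truly fiber transport along $\gamma$ and nothing more.
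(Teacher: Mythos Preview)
Your proposal is correct and follows the same conceptual route as the paper: pull back $\chi_e$ along the lift, use the zig-zag of Observation~\ref{whtors:observation1} coming from $H$ to pass from $\holim(F'\circ(f\tilde p)_*)$ to $\holim(F'\circ p_*)$, and read off a point of the homotopy fiber over $p^*\chi$. Parts (ii)--(iv) indeed drop out of functoriality just as you say.

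The one place where your write-up is looser than the paper is the phrase ``performing the entire construction over the universal $(\tilde p,H)$ parametrized by $\calS$.'' Since one leg of the zig-zag points the wrong way, you cannot literally transport pointwise and then assert continuity in $(\tilde p,H)$; some device is needed to make this an honest map. The paper's device is to set $\calC=\simp\calS\op$, let $G(q,H)=\holim(F\circ f_*\circ q_*)$, and observe that restriction gives a genuine map $\kappa\colon\holim(F\circ f_*)\to\lim G\to\holim G$, while Observation~\ref{whtors:observation1} applied objectwise identifies $\holim G$ with $\map(\calS,\holim(F\circ p_*))$ through a zig-zag of weak equivalences. In other words, rather than inverting the backward leg simplex-by-simplex, the paper keeps everything on the ``wrong'' side and inverts only once, globally, at the level of $\holim G$. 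Your description unravels to exactly this after that identification, so the two arguments coincide; the paper's packaging simply makes the coherence you flag in your last paragraph automatic.
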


The remainder of this subsection will be devoted to the proof of this result. The core is the construction of a space $X$, equipped with a zigzag of weak homotopy equivalences
\begin{equation}\label{eq:complicated_mapping_space}
X\simeq \map\left(\calS, \holim(F\circ p_*)\right), 
\end{equation}
and a canonical map
\[\kappa\colon \holim (F\circ f_*)\to X.\]

Applying this construction to the functor $F'$ instead of $F$, it follows that any $\chi_e\in\pi_0\holim(F'\circ f_*)$ determines a homotopy class of maps
\[\calS\to\holim (F'\circ p_*),\]
which is half-way in the construction of $\kappa_{(\chi,\chi_e)}$.

To construct $X$, let $\calC:=\simp \calS\op$; an object in $\calC$ is thus of the form $(q,H)$ with 
\[
q\colon B_\bullet\times\Delta^n_\bullet\to\tilde\calB_\bullet,\quad
H\colon B_\bullet\times\Delta^n_\bullet\times\Delta^1_\bullet \to\calB_\bullet 
\]
where $q$ lifts $H_0$ and $H_1$ factors through $p$. Consider the functor $G\colon\calC\to\CGHaus$ given by
\[G(q,H)  = \holim(F\circ f_*\circ q_*).\]
By Observation \ref{whtors:observation1}, the homotopy  $H$ from $q\circ f$ to $p$ induces a (natural) weak homotopy equivalence $G(q,H)\simeq\holim(F\circ p_*)$ so that
\[\holim G \simeq \holim\bigl(\holim(F\circ p_*)\bigr)\simeq \map(\vert\calC\vert, \holim(F\circ p_*)).\]
Now $\vert\calC\vert\simeq\calS$; thus if we let $X:=\holim G$, we see that $X\simeq\map(\calS,\holim(F\circ p_*))$. 

For any object $(q,H)$ of $\calC$ there is a restriction map 
\[\holim (F\circ f_*)\to \holim(F\circ f_*\circ q_*)=G(q,H).\]
These maps are compatible with the morphisms of $\calC$, so we may define
\[\kappa\colon \holim (F\circ f_*)\to\lim G\to\holim G\]
where the latter map is the canonical inclusion. This finishes the construction of $X$ and $\kappa$.

\begin{proof}[of Proposition \ref{whtors:construction_of_excisive_characteristic}]
Under the zig-zag of maps between $G(q,H)$ and $\holim(F\circ p_*)$, the image of $f^*\chi$ corresponds to  $p^*\chi$. In other words there is a commutative diagram
\[\xymatrix{
{\holim(F'\circ f_*)} \ar[r]^\alpha \ar[d]^{\kappa} 
 & {\holim(F\circ f_*)} \ar[d]^\kappa
 & {*} \ar[l]_(.3){f^*\chi} \ar@{=}[d]\\
{\holim G'} \ar[r]^\alpha \ar@{-}[d]^{\wr}
 & {\holim G} \ar@{-}[d]^{\wr} 
 & {*} \ar[l]_(.3){\kappa(f^*\chi)} \ar@{=}[d]\\
{\map(\calS, \holim(F'\circ p_*))} \ar[r]^\alpha
 & {\map(\calS, \holim(F\circ p_*))} 
 & {*} \ar[l]_(.2){p^*\chi}
}\]
Here the lower horizontal arrow on the right denotes the constant map with value $p^*\chi$.

Thus there is a map
\begin{multline*}
\pi_0\hofib_{f^*\chi}\bigl(\holim(F'\circ f_*)\xrightarrow{\alpha}\holim(F\circ f_*)\bigr)\\
\xrightarrow{\kappa} 
\pi_0\hofib_{\kappa(f^*\chi)}\bigl(\holim G'\xrightarrow{\alpha}\holim G\bigr) \\
\cong
\bigl[\calS, \hofib_{p^*\chi} \bigl(\holim(F'\circ p_*)\xrightarrow{\alpha}\holim(F\circ p_*)\bigr)\bigr]. 
\end{multline*}
By definition, $\kappa_{(\chi,\chi_e)}$ is the image of $\chi_e$ under this map.

This proves part (i). To prove part (ii), replace the one-point spaces in the right-hand column of the diagram by intervals. Part (iii) follows from naturality and (iv) holds as all of these constructions are compatible with loop spaces.
\end{proof}

\subsection{The parametrized excisive characteristic and the parametrized torsion}

Recall from section \ref{strspaces} that $\mathcal B:=\vert\Fib(*;F)_\bullet\vert$ and $\tilde{\mathcal B}:=\vert\Bun_n(*;F)_\bullet\vert$ carry universal ``bundles'' $\mathcal P\colon \mathcal E\to \mathcal B$ and $\tilde{\mathcal P}\colon\tilde{\mathcal E}\to\tilde{\mathcal B}$. They have the property that the restriction of the ``bundles'' over every locally finite subcomplex are fibrations resp.~bundles. This is good enough to define parametrized characteristics, which only make use of the restrictions over simplices. Thus we get
\[\chi\in\holim_{\sigma\in\simp \calB_\bullet} A(\mathcal E_\sigma),\quad \chi_e\in\holim_{\sigma\in\simp\tilde\calB_\bullet} A^\%(\mathcal E_\sigma)\]
where $\chi_e$ lifts the restriction of $\chi$ over $\tilde\calB_\bullet$.

In other words, if we let
\[F(\sigma):=A(\mathcal E_\sigma),\quad F'(\sigma):=A^\%(\mathcal E_\sigma)\]
and $\alpha\colon F'\to F$ be the assembly transformation, then $(\chi,\chi_e)$ defines a characteristic pair.

\begin{defn}
The \emph{parametrized excisive characteristic} for the fibration $p$ is given by
\[\chi^\%:=\kappa_{(\chi, \chi_e)}\colon\calS_n(p)
\to
\hofib_{\chi(p)} \biggl(\holim_{\sigma\in\simp B_\bullet} A^\%(E_\sigma) \to \holim_{\sigma\in\simp B_\bullet} A(E_\sigma)\biggr).
\]
\end{defn}

So given two structures $x$ and $y$ on $p$, we may take the loop space difference
\begin{multline*}
\chi^\%(x)-\chi^\%(y)\in \hofib_* \biggl(\holim_{\sigma\in\simp B_\bullet} A^\%(E_\sigma) \to \holim_{\sigma\in\simp B_\bullet} A(E_\sigma)\biggr)\\
\simeq \holim_{\sigma\in\simp B_\bullet} \Omega\Wh(E_\sigma)\simeq \sections{\Omega\Wh_B(E)}{B}.
\end{multline*}

We will sometimes use the section space notation as it is the more intuitive one; strictly speaking however we will always use the homotopy limit space since this one has an infinite loop space structure.

\begin{defn}
If $p$ is a bundle of compact topological $n$-manifolds, the loop space difference 
\[\tau:=\chi^\% - \chi^\%(\id)\colon\calS_n(p)\to \holim_{\sigma\in\simp B_\bullet} \Omega\Wh(E_\sigma)\]
is the \emph{parametrized Whitehead torsion} for $p$.
\end{defn}

\subsection*{Naturality properties}

It follows directly from the definition of the excisive characteristic that it is compatible with pull-backs: If $f\colon B'\to B$ is a map, then we have
\[f^*\circ\chi^\% \simeq \chi^\%\circ f^*\]
with the obvious restriction operations. Since $f^*\id=\id$, the parametrized Whitehead torsion is compatible with pull-backs, too. 

Let us now consider naturality with respect to fiber homotopy equivalences: Let $p'\colon E'\to B$ and $p''\colon E''\to B$ be fibrations for which the excisive characteristic is defined, and let $\varphi\colon p'\to p''$ be a fiber homotopy equivalence. 
There is a canonical path $\varphi_*\chi(p')\rightsquigarrow \chi(p'')$; using the fiber transport $t_\gamma$ along this path we may define
\begin{multline*}
\varphi_\bigstar:=t_\gamma\circ \varphi_*\colon \hofib_{\chi(p')}\bigl(\holim_\sigma A^\%(E'_\sigma)\to\holim_\sigma A(E'_\sigma)\bigr) \\
\to \hofib_{\chi(p'')}\bigl(\holim_\sigma A^\%(E''_\sigma)\to\holim_\sigma A(E''_\sigma)\bigr).
\end{multline*}

\begin{lem}\label{whtors:covariant_functoriality_of_excisive_characteristic}
The parametrized excisive characteristic is natural in the sense that $\varphi_\bigstar\circ\chi^\%\simeq \chi^\%\circ\varphi_*$.
\end{lem}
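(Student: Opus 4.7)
My approach exploits the stated pull-back naturality $f^*\circ\chi^\%\simeq\chi^\%\circ f^*$. The fiber homotopy equivalence $\varphi\colon p'\to p''$ is classified, via the identifications underlying Corollary~\ref{strspaces:structure_space_as_space_of_lifts}, by a simplicial homotopy $\Phi\colon B_\bullet\times\Delta^1_\bullet\to\calB_\bullet$ between the classifying maps of $p'$ and $p''$. This homotopy yields a fibration $q:=\Phi^*\calE\to B\times I$ whose restrictions along the endpoint inclusions $i_j\colon B\to B\times I$ ($j=0,1$) are $p'$ and $p''$, respectively. Both $\chi^\%\circ\varphi_*$ and $\varphi_\bigstar\circ\chi^\%$ should then factor through $\chi^\%(q)$, making the naturality a formal consequence of pull-back invariance.

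The first step is to show that $\varphi_*\simeq i_1^*\circ(i_0^*)^{-1}$ as maps $\calS_n(p')\to\calS_n(p'')$, where the $i_j^*$ are weak equivalences by Lemma~\ref{strspaces:homotopy_invariance_I} (contravariant version) since the $i_j$ are homotopy equivalences. Passing to the classifying-space model of Corollary~\ref{strspaces:structure_space_as_space_of_lifts}, both composites realize fiber transport along $\Phi$ in the Kan fibration $\tilde\calB_\bullet\to\calB_\bullet$, so they agree up to homotopy.

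The second step is the analogous identification $\varphi_\bigstar\simeq i_1^*\circ(i_0^*)^{-1}$ on the level of homotopy fibers $\hofib_{\chi(p')}(\ldots)\to\hofib_{\chi(p'')}(\ldots)$. Here the $i_j^*$ are weak equivalences via the canonical paths between $i_j^*\chi(q)$ and $\chi(p'),\chi(p'')$ coming from Lemma~\ref{DWW:naturality_of_A_theory_characteristic} and its excisive analogue (Lemma~\ref{strspaces:lax_naturality_of_excisive_characteristic}). This identification reduces to verifying that the abstract path $\gamma\colon\varphi_*\chi(p')\rightsquigarrow\chi(p'')$ (used to define $\varphi_\bigstar$) is homotopic to the path obtained by concatenating the two canonical paths at the endpoints of $\Phi$, which holds by the universality of the characteristic pair.

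Combining the two steps with the pull-back naturality $i_j^*\circ\chi^\%\simeq\chi^\%\circ i_j^*$ yields the chain
\[\chi^\%\circ\varphi_*\simeq\chi^\%\circ i_1^*\circ(i_0^*)^{-1}\simeq i_1^*\circ\chi^\%(q)\circ(i_0^*)^{-1}\simeq i_1^*\circ(i_0^*)^{-1}\circ\chi^\%\simeq\varphi_\bigstar\circ\chi^\%,\]
proving the lemma. The main obstacle is the coherence verification in the second step: comparing two \emph{a priori} distinct descriptions of the canonical path from $\varphi_*\chi(p')$ to $\chi(p'')$, namely the one of Lemma~\ref{DWW:naturality_of_A_theory_characteristic} (constructed via construction (A) applied to $\varphi$) and the one obtained by restriction of the universal characteristic pair along $\Phi$. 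The analogous comparison must be carried out coherently for the excisive lift $\chi_e$ as well.
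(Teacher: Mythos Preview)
Your approach is essentially the paper's own. The paper also introduces a fibration $p$ over $B\times I$ with endpoint restrictions $p',p''$ and fiber transport $\varphi$, then uses pull-back naturality of $\chi^\%$ along $i_0,i_1$ together with the identifications $\varphi_*\circ i_0^*\simeq i_1^*$ on structure spaces and $\varphi_\bigstar\circ i_0^*\simeq i_1^*$ on targets; your chain of equivalences is (up to inverting $i_0^*$) literally the paper's chain $\chi^\%\circ\varphi_*\circ i_0^*\simeq\chi^\%\circ i_1^*\simeq i_1^*\circ\chi^\%\simeq\varphi_\bigstar\circ i_0^*\circ\chi^\%\simeq\varphi_\bigstar\circ\chi^\%\circ i_0^*$. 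The one place where the paper is cleaner is exactly your ``main obstacle'': rather than comparing two descriptions of the canonical path $\varphi_*\chi(p')\rightsquigarrow\chi(p'')$, the paper proves $i_1^*\simeq\varphi_\bigstar\circ i_0^*$ by first checking it when $p=p'\times\id_I$ is trivial in the $I$-direction (where $\varphi_\bigstar=\id$ and both $i_j^*$ are left inverses of $\times I$, hence agree), and then observing that any fibration over $B\times I$ can be trivialized in the $I$-direction. This sidesteps the coherence verification you flag.
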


\begin{cor}[(Composition rule)]
If $\varphi\colon p'\to p''$ is a fiber homotopy equivalence between bundles of compact $n$-manifolds, then
\[\tau\circ\varphi_* \simeq \varphi_*\circ \tau + \tau(\varphi).\]
\end{cor}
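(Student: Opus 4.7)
The plan is to reduce the statement to Lemma \ref{whtors:covariant_functoriality_of_excisive_characteristic} together with the additivity of loop subtraction in the homotopy fibers of the assembly map. Everything takes place in $\bar R(p'')=\holim_{\sigma}\Omega\Wh(E''_\sigma)$; the key observation is that the definition $\tau=\chi^\%-\chi^\%(\id)$ means the output lives in the loop-space $\hofib_{\chi(p'')}(\alpha'')\simeq \bar R(p'')$, so ordinary loop-space identities apply.

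Given a structure $x\in\calS_n(p')$, I would first unfold the definitions: $\tau(\varphi_* x)=\chi^\%(\varphi_*x)-\chi^\%(\id_{p''})$. Applying Lemma \ref{whtors:covariant_functoriality_of_excisive_characteristic} to $x$ gives $\chi^\%(\varphi_*x)\simeq \varphi_\bigstar\chi^\%(x)$, and applying it to $\id_{p'}$ together with the identification $\varphi=\varphi_*(\id_{p'})$ (viewing $\varphi$ as a structure on $p''$) gives $\chi^\%(\varphi)\simeq \varphi_\bigstar\chi^\%(\id_{p'})$. Next I would use the additivity of loop subtraction to insert and cancel:
\[\varphi_\bigstar\chi^\%(x)-\chi^\%(\id_{p''})\simeq \bigl(\varphi_\bigstar\chi^\%(x)-\varphi_\bigstar\chi^\%(\id_{p'})\bigr)+\bigl(\varphi_\bigstar\chi^\%(\id_{p'})-\chi^\%(\id_{p''})\bigr).\]
The second summand is precisely $\tau(\varphi)$ by the previous step, so it only remains to identify the first summand with $\varphi_*\tau(x)$.

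This last identification is where the real work lies, and it is the point I expect to be the main obstacle. Recall that $\varphi_\bigstar=t_\gamma\circ\varphi_*$ is the composite of the functorially induced map $\varphi_*\colon \holim_\sigma A(E'_\sigma)\to\holim_\sigma A(E''_\sigma)$ (and its analogue for $A^\%$) with fiber transport $t_\gamma$ along the canonical path $\gamma\colon \varphi_*\chi(p')\rightsquigarrow\chi(p'')$. Since the assembly transformation $\alpha$ is a map of infinite loop spaces, the homotopy fiber at any basepoint inherits the loop-space structure of $\bar R$, and this structure is preserved under fiber transport along $\gamma$ as well as under the loop map $\varphi_*$. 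Consequently
\[\varphi_\bigstar\chi^\%(x)-\varphi_\bigstar\chi^\%(\id_{p'})\simeq \varphi_*\bigl(\chi^\%(x)-\chi^\%(\id_{p'})\bigr)=\varphi_*\tau(x),\]
where on the right the symbol $\varphi_*$ denotes the induced map $\bar R(p')\to\bar R(p'')$. Combining these steps yields $\tau(\varphi_*x)\simeq \varphi_*\tau(x)+\tau(\varphi)$, which is the desired composition rule.

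To make Step~3 rigorous, I would invoke the infinite loop space structures of $A$ and $A^\%$: the assembly map is a map of $\Omega$-spectra, so taking homotopy fibers is functorial in the basepoint, and fiber transport is compatible with the additive structure in the canonical way. Alternatively, one can deduce the same statement directly from Proposition \ref{whtors:construction_of_excisive_characteristic}(iv), applied to a characteristic pair that decomposes as a loop sum, together with part~(ii) of the same proposition governing fiber transport.
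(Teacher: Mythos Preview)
Your proof is correct and follows essentially the same route as the paper's: unfold $\tau=\chi^\%-\chi^\%(\id)$, insert and cancel $\chi^\%(\varphi)=\chi^\%(\varphi_*\id_{p'})\simeq\varphi_\bigstar\chi^\%(\id_{p'})$ via Lemma~\ref{whtors:covariant_functoriality_of_excisive_characteristic}, and then observe that on loop differences $\varphi_\bigstar=t_\gamma\circ\varphi_*$ reduces to $\varphi_*$ because fiber transport is a loop map. The paper carries this out as a one-line chain of $\simeq$'s and passes from $\varphi_\bigstar$ to $\varphi_*$ without comment; you have made that step explicit, which is an improvement in exposition rather than a different argument.
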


\begin{proof}[of Composition rule, assuming Lemma \ref{whtors:covariant_functoriality_of_excisive_characteristic}]
We have
\begin{align*}
\tau\circ \varphi_* & \simeq \chi^\%\circ \varphi_* - \chi^\%(\id_{p''}) \\
& \simeq(\chi^\%\circ \varphi_* - \chi^\%\circ\varphi_*(\id_{p'})) +
   (\chi^\%(\varphi) - \chi^\%(\id_{p''}))\\
& \simeq (\varphi_\bigstar\circ\chi^\% - \varphi_\bigstar\circ\chi^\%(\id_{p'})) + \tau(\varphi)\\
& \simeq (\varphi_*\circ\chi^\% - \varphi_*\circ\chi^\%(\id_{p'})) + \tau(\varphi)\\
& \simeq \varphi_*\circ \tau + \tau(\varphi). 
\end{align*}
\end{proof}

To prove Lemma \ref{whtors:covariant_functoriality_of_excisive_characteristic}, let $p\colon E\to B\times I$ be a fibration which restricts to $p'$ over $B\times 0$ and to $p''$ over $B\times 1$, such that fiber transport from $B\times 0$ to $B\times 1$ is given by the fiber homotopy equivalence $\varphi$. Denote by $i_0,i_1$ the inclusions of $B\times 0$ and $B\times 1$ into $B\times I$ respectively. Abbreviate $\bar R(p)$ as the target of the map $\chi^\%$ for the fibration $p$.

\begin{obs}
We have 
\[i_1^*\simeq \varphi_\bigstar\circ i_0^*\colon \bar R(p)\to \bar R(p'')\]
where the sign $\simeq$ means that the two maps agree after inverting weak homotopy equivalences.
\end{obs}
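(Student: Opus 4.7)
The plan is to use Observation \ref{whtors:observation1} to produce a zigzag of weak equivalences through which both $i_0^*$ and $i_1^*$ factor, and then to identify this zigzag with the map $\varphi_\bigstar$. The starting point is that the identity map $B\times I\to B\times I$, under the identification $I\cong\vert\Delta^1\vert$, supplies a simplicial homotopy $H\colon B_\bullet\times\Delta^1_\bullet\to (B\times I)_\bullet$ from $i_0$ to $i_1$.

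First I would apply Observation \ref{whtors:observation1} to the functors $F(\sigma)=A(E_\sigma)$ and $F'(\sigma)=A^\%(E_\sigma)$ with the assembly transformation $\alpha\colon F'\to F$. This produces compatible zigzags
\[\holim(F\circ (i_0)_*) \xleftarrow{\simeq} \holim(F\circ H_*) \xrightarrow{\simeq} \holim(F\circ (i_1)_*),\]
and likewise for $F'$, consisting of restriction maps. Passing to homotopy fibers over the characteristics $i_0^*\chi(p)=\chi(p')$ and $i_1^*\chi(p)=\chi(p'')$ (which are related by the definitions reviewed in \S\ref{subsection:some_remarks_on_naturality}) yields a zigzag of weak equivalences $\bar R(p')\simeq \bar R(p'')$. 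By construction both $i_0^*\colon \bar R(p)\to\bar R(p')$ and $i_1^*\colon\bar R(p)\to\bar R(p'')$ arise as the composition of restriction along $H$ into $\holim(F\circ H_*)$ (and its $F'$-analogue) followed by restriction at the endpoints of $\Delta^1_\bullet$, so they correspond to one another under this zigzag.

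The main content is to identify the resulting comparison $\bar R(p')\simeq \bar R(p'')$ with $\varphi_\bigstar=t_\gamma\circ\varphi_*$. The fiber transport in $p$ along $I$ is by hypothesis the fiber homotopy equivalence $\varphi$, so applying the $A$-theory functor fiberwise turns the middle term into a cylinder-type interpolation between $A(E'_\sigma)$ and $A(E''_\sigma)$ whose induced map on homotopy limits is $\varphi_*$, up to a canonical path between $\varphi_*\chi(p')$ and $\chi(p'')$. That path must agree with $\gamma$ because both are produced by the same recipe: construction (A) from \S\ref{subsection:background_on_characteristics} applied, via the natural transformation supplied by $\varphi$, to the functor $\simp B_\bullet\times [1]\to\CGHaus$ interpolating between $E'_\sigma$ and $E''_\sigma$.

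The main obstacle will be this final identification — showing that the abstract Observation-induced zigzag, after applying the $A$-theoretic functors, coincides up to homotopy with the explicit construction of $\gamma$ given in the proof of Lemma \ref{DWW:naturality_of_A_theory_characteristic}. The argument reduces to a bookkeeping exercise comparing the two characteristic-pair constructions over $\simp B_\bullet\times\simp\Delta^1_\bullet$, and it has to be carried out compatibly with both $F$ and $F'$ so that passage to the homotopy fibers, and hence the loop-space difference defining $\tau$, respects the identification.
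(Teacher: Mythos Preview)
Your approach is correct in spirit but considerably more elaborate than the paper's. The paper argues in two lines: first, if $p=p'\times\id_I$ is trivial in the $I$-direction, then both $i_0^*$ and $i_1^*$ are left inverses of the product map $\bar R(p')\to\bar R(p'\times\id_I)$ and hence agree, while $\varphi_\bigstar=\id$ since $\varphi=\id$; second, the general case follows because any fibration over $B\times I$ may be trivialized in the $I$-direction.

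Your route via Observation~\ref{whtors:observation1} is a natural direct attack, but the step you flag as the main obstacle --- matching the abstract zigzag with $\varphi_\bigstar=t_\gamma\circ\varphi_*$ --- is precisely where the content lies. Note in particular that the path $\gamma$ of Lemma~\ref{DWW:naturality_of_A_theory_characteristic} is built over $\simp B_\bullet\times[1]$, whereas your zigzag lives over $\simp(B_\bullet\times\Delta^1_\bullet)$; saying that ``both are produced by the same recipe'' glosses over a genuine comparison between these indexing categories. More fundamentally, since $\varphi$ is \emph{defined} as fiber transport along $I$, any rigorous identification of your zigzag with $\varphi_*$ will almost certainly pass through a trivialization of $p$ in the $I$-direction --- at which point you have reproduced the paper's argument inside a larger framework. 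The paper simply front-loads that trivialization, so the comparison becomes tautological and all the bookkeeping over $\simp(B_\bullet\times\Delta^1_\bullet)$ evaporates. What your approach buys is a clearer picture of why the statement should hold; what the paper's approach buys is a two-sentence proof.
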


In fact, if the bundle $p=p'\times \id_I$ is trivial in the $I$-direction, then both $i_1^*$ and $i_0^*$ are left inverses of taking product with $I$. So $i_1^*\simeq i_2^*$. Since $\varphi_\bigstar=\id$ in this case, the claim is true. The general case follows since each fibration may be trivialized in the $I$-direction.

\begin{proof}[of Lemma \ref{whtors:covariant_functoriality_of_excisive_characteristic}]
With the same notation as above, we have
\[\chi^\%\circ \varphi_*\circ i_0^*\simeq \chi^\%\circ i_1^*\simeq i_1^*\circ \chi^\%\simeq \varphi_\bigstar \circ i_0^*\circ \chi^\%\simeq \varphi_\bigstar\circ \chi^\%\circ i_0^*.\]
Since $i_0^*$ is a homotopy equivalence, the result follows.
\end{proof}

As a final naturality property, we remark:

\begin{rem}[(Homeomorphism invariance)]
If $\varphi\colon E'\to E$ is a fiber homeomorphism of bundles, then $\tau(\varphi)=0$. In fact, the fiber homeomorphism provides $\varphi\in\calS_n(p)$ with a canonical nullhomotopy.
\end{rem}

\section{A product rule}\label{section:product_rule}

Given a fibration $p\colon E\to B$ and a compact topological $k$-manifold $X$, taking cartesian product with $X$ defines a map
\[(\times X)\colon \calS_n(p)\to\calS_{n+k}(p\times X)\]
where $p\times X\colon E\times X\to B$ is the composition of the projection map onto $E$ with $p$. We will show:

\begin{thm}\label{pr:general_product_rule}
If $X$ is connected, then the parametrized Whitehead torsion satisfies:
\[\tau\circ (\times X) \simeq \chi_e(X) \cdot i_*\circ \tau\colon \calS_n(p)\to\sections{\Omega\Wh_B(E\times X)}{B}\]
where $\chi_e(X)\in\ZZ$ is the Euler characteristic and $i_*$ is the map induced by the inclusion $E\cong E\times \{x\}\subset E\times X$ for any $x\in X$.
\end{thm}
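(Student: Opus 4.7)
The strategy is to reduce the theorem, via the characteristic-pair formalism of Proposition~\ref{whtors:construction_of_excisive_characteristic}, to a product formula for the universal parametrized excisive characteristic, and then to extract the integer $\chi_e(X)$ using connectedness of $X$. Since $\tau=\chi^\%-\chi^\%(\id)$ and $i_*$ is $\ZZ$-linear, it is enough to prove the identity
\[\chi^\%(q\times X)\simeq\chi_e(X)\cdot i_*\chi^\%(q)\]
naturally in $q\in\calS_n(p)$; subtracting the same identity evaluated at $q=\id_p$ then yields the product rule.

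To establish this, I would prove a product formula at the universal level. The bundle $\tilde\calP\times X\colon\tilde\calE\times X\to\tilde\calB$ carries a characteristic pair built from the parametrized $A$-theory characteristic and its excisive lift. The external product of retractive spaces induces natural pairings
\[A(Y)\wedge A(X)\to A(Y\times X),\qquad A^\%(Y)\wedge A^\%(X)\to A^\%(Y\times X),\]
compatible with the assembly transformation. Under these pairings, the universal characteristic pair for $\tilde\calP\times X$ factors as the product of the universal pair for $\tilde\calP$ with the excisive characteristic pair for $X$ viewed over a point. Proposition~\ref{whtors:construction_of_excisive_characteristic}(iii) and (iv) then transport this factorization into a natural identification $\chi^\%(q\times X)\simeq\chi^\%(q)\cdot\chi^\%(X)$ in the target infinite loop space.

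The third step is to exploit connectedness of $X$. The excisive element $\chi^\%(X)\in A^\%(X)$ projects, under the canonical map $A^\%(X)\to\pi_0 A^\%(X)\cong\ZZ$ (an isomorphism since $X$ is connected), to the ordinary Euler characteristic $\chi_e(X)$. Under the module structure of $\Omega\Wh$ over the $A^\%$-homology theory, the pairing with $\chi^\%(X)$ therefore reduces to multiplication by $\chi_e(X)$ followed by the inclusion-induced map $i_*$ at any basepoint of $X$, which gives the identity claimed above.

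The principal obstacle is setting up the product formula for characteristic pairs compatibly with the construction $\kappa$ of Proposition~\ref{whtors:construction_of_excisive_characteristic}. The external product in the $K$-theory of spaces must descend to the homotopy limits over the simplex categories of $\tilde\calB_\bullet$ and $\calB_\bullet$, and must lift consistently through the assembly map, which demands careful bookkeeping of the Dwyer--Weiss--Williams model for $A^\%$. Once this is in place, the loop-sum compatibility of Proposition~\ref{whtors:construction_of_excisive_characteristic}(iv) reduces the statement to the universal product identity, and the connectedness of $X$ finishes the argument.
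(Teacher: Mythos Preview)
Your approach is genuinely different from the paper's, and while the overall strategy is reasonable, there is a real gap that you yourself identify but do not resolve.

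\textbf{What the paper does.} The paper never establishes a multiplicative product formula for the excisive characteristic. Instead it proceeds in two stages. First, for \emph{contractible} $X$ it proves the special identity $\pi_\bigstar\circ\chi^\%\circ(\times X)\simeq\chi^\%$ (Theorem~\ref{pr:special_product_rule_for_excisive_characteristic}); the key input is Lemma~\ref{strspaces:lax_naturality_of_excisive_characteristic}, which exploits the fact that the projection $E_\sigma^\theta\times X\to E_\sigma^\theta$ is a \emph{cell-like} map when $X$ is contractible, so that construction~(B) from \S\ref{subsection:background_on_characteristics} applies directly. Second, for general $X$ the paper reduces to the contractible case by the Additivity theorem (Theorem~\ref{strspaces:additivity}) and an induction on a handlebody decomposition of $X$: one writes $X=X'\cup_{D^l\times S^{k-1}} D^l\times D^k$, uses that the claim is already known for $D^n$ and (inductively) for $S^n$, and appeals to additivity of both $\tau$ and the Euler characteristic. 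No external product on $A^\%$ is ever invoked.

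\textbf{The gap in your argument.} The heart of your proof is the assertion that the universal characteristic pair for $\tilde\calP\times X$ factors as an external product of the pair for $\tilde\calP$ with $\chi^\%(X)$. This is precisely the multiplicativity of the Dwyer--Weiss--Williams excisive characteristic, and it is not available within the framework of the paper: the cell-like argument of Lemma~\ref{strspaces:lax_naturality_of_excisive_characteristic} gives nothing for non-contractible $X$, and Proposition~\ref{whtors:construction_of_excisive_characteristic}(iii),(iv) concern only natural transformations and loop sums, not pairings. Establishing such multiplicativity in the specific model $A^\%(X)=\holim\bigl(A(X)\to K(\calR^{ld}(\mathbb{J}X))\leftarrow K(\mathcal V(X))\bigr)$ used here is a substantial undertaking, comparable in difficulty to the additivity results of Badzioch--Dorabia\l a that the paper imports. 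So the ``principal obstacle'' you flag is in fact the entire content of the proof, not a matter of bookkeeping.

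A smaller point: your opening reduction ``$\chi^\%(q\times X)\simeq\chi_e(X)\cdot i_*\chi^\%(q)$'' is ill-posed as stated, since $\chi^\%(q)$ lives in a torsor over the loop space, not in the loop space itself, so integer multiplication makes no direct sense. You later correct this to the external-product form $\chi^\%(q)\cdot\chi^\%(X)$, which is the right formulation; after taking the difference to form $\tau$, your $\pi_0 A^\%(X)\cong\ZZ$ argument does go through, because the pairing $\Omega\Wh(E)\times\hofib_{\chi(X)}(\alpha)\to\Omega\Wh(E\times X)$ only uses the $A^\%(X)$-component of the second factor. But none of this helps until the multiplicativity of $\chi^\%$ is in hand.
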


Recall that, strictly speaking, the space of sections appearing in the right-hand side is a certain homotopy limit which carries the structure of an infinite loop space. This structure is used for the multiplication in the Theorem.

\begin{rem}
Recall that the total spaces of all our bundles and fibrations are subsets of $B\times\calU$. To make sense of the product map $(\times X)$, we must (and will) assume that $X$ is a subset of $\calU$, so that $E\times X\subset B\times \calU\times \calU\cong B\times \calU$, by our chosen bijection $\calU\times\calU\to\calU$.
\end{rem}

In this section, we will only consider the case where $X$ is contractible -- the general case will follow from the additivity of the parametrized torsion, see section \ref{section:additivity}. If $X$ is contractible, then $i_*$ is a homotopy equivalence, whose inverse is induced by the projection $\pi\colon E\times X\to E$. Hence in this situation Theorem \ref{pr:general_product_rule} reads:
\begin{equation}\label{pr:special_product_rule}
\pi_*\circ \tau\circ (\times X) \simeq \tau\colon\calS_n(p)\to\sections{\Omega\Wh_B(E)}{B}.
\end{equation}

In fact \eqref{pr:special_product_rule} is a consequence of a corresponding property of the parametrized excisive characteristic. Recall the  map $\pi_\bigstar$ which was defined before Lemma \ref{whtors:covariant_functoriality_of_excisive_characteristic}. It connects the target of the excisive characteristic for $p\times X$ with that for $p$ in the most obvious way.

\begin{thm}\label{pr:special_product_rule_for_excisive_characteristic}
We have
\[
\pi_\bigstar\circ\chi^\%\circ (\times X) \simeq \chi^\%\colon
\calS_n(p)
\to 
\hofib_{\chi(p)}\biggl(\holim_{\sigma\in\simp B_\bullet} A^\%(E_\sigma)\to \holim_{\sigma\in\simp B_\bullet} A(E_\sigma)\biggr).
\]
\end{thm}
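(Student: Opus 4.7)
The plan is to apply the framework of Proposition \ref{whtors:construction_of_excisive_characteristic} in order to reduce the claim to producing a canonical homotopy between two characteristic pairs.

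First I observe that the operation $(\times X)$ is induced at the universal level. Setting $\calB'=\vert\Fib(*;F\times X)_\bullet\vert$ and $\tilde\calB'=\vert\Bun_{n+k}(*;F\times X)_\bullet\vert$ with their universal ``bundles'' $\calP'$ and $\tilde\calP'$, taking Cartesian product with $X$ yields natural maps $\mu\colon\calB\to\calB'$ and $\tilde\mu\colon\tilde\calB\to\tilde\calB'$ with canonical identifications $\mu^*\calP'\cong\calP\times X$ and $\tilde\mu^*\tilde\calP'\cong\tilde\calP\times X$, and under $\calS_n(p)\simeq\lift{\tilde\calB}{B}{p}{\calB}$ (and the analogous identification for $\calS_{n+k}(p\times X)$) the map $(\times X)$ corresponds to precomposition with $\mu,\tilde\mu$. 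By the construction of $\kappa$, this precomposition converts $\chi^\%(p\times X)=\kappa_{(\chi',\chi'_e)}$ into $\kappa_{(\mu^*\chi',\tilde\mu^*\chi'_e)}$; by the naturality remarks of subsection \ref{subsection:some_remarks_on_naturality} the pullback pair equals $(\chi(\calP\times X),\chi_e(\tilde\calP\times X))$, now viewed as a characteristic pair over $\calB,\tilde\calB$ for the functors $\sigma\mapsto A(\calE_\sigma\times X)$ and $\sigma\mapsto A^\%(\calE_\sigma\times X)$. Applying Proposition \ref{whtors:construction_of_excisive_characteristic}(iii) to the natural transformation $\pi_*$ then yields
\[\pi_*\circ\chi^\%(p\times X)\circ(\times X)\simeq \kappa_{\bigl(\pi_*\chi(\calP\times X),\,\pi_*\chi_e(\tilde\calP\times X)\bigr)},\]
valued in the same target as $\chi^\%(p)=\kappa_{(\chi(\calP),\chi_e(\tilde\calP))}$.

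It therefore suffices to exhibit a canonical homotopy of characteristic pairs from $(\pi_*\chi(\calP\times X),\pi_*\chi_e(\tilde\calP\times X))$ to $(\chi(\calP),\chi_e(\tilde\calP))$: then Proposition \ref{whtors:construction_of_excisive_characteristic}(ii), combined with the very definition of $\pi_\bigstar$ as fiber transport along the resulting path from $\pi_*\chi(p\times X)$ to $\chi(p)$, completes the proof. For the $A$-theory component this path arises directly from the contraction of $X$: at each simplex $\sigma$, construction~(A) of subsection \ref{subsection:background_on_characteristics} describes the characteristic object as $(\calE_\sigma\times X)\times S^0$, whose pushforward along $\pi$ is a retractive space over $\calE_\sigma$ on which the contraction $X\simeq *$ induces a simple-homotopy equivalence to $\calE_\sigma\times S^0$, natural in $\sigma\in\simp\calB_\bullet$, and these simple-homotopy equivalences assemble into the required path in the $A$-theory homotopy limit.

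The main obstacle is the excisive counterpart: because the Waldhausen category $\calR^\%$ is less transparent than $\Rfd$ and construction~(B) additionally involves cell-like data, the contraction of $X$ must be manipulated as a homotopy of cell-like collapses, carried out compatibly with the assembly transformation, so as to refine the $A$-theory path above to a path of characteristic pairs in the sense required by Proposition \ref{whtors:construction_of_excisive_characteristic}(ii). All of the data must moreover be produced at the universal level so as to be natural in $\sigma\in\simp\tilde\calB_\bullet$.
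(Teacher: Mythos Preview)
Your reduction to a homotopy of characteristic pairs, via Proposition \ref{whtors:construction_of_excisive_characteristic}(ii)--(iii), is exactly the paper's strategy, and the formal manipulations you carry out match the paper's computation. The genuine gap is in your final paragraph: you correctly identify the excisive lift of the $A$-theory path as the crux, but you do not supply it. Describing what ``must be manipulated'' is not a proof, and in fact the way you phrase it (``as a homotopy of cell-like collapses'') suggests you are looking for a more complicated construction than is needed.

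The missing observation is this: since $X$ is a contractible compact manifold, the projection $E_\sigma^\theta\times X\to E_\sigma^\theta$ is itself a cell-like map (proper, with contractible point-inverses). Hence the natural transformation $E_\sigma^\theta\times X\to E_\sigma^\theta$, viewed as a functor on $\simp tB_\bullet\times[1]$, sends all objects to compact ENRs and all morphisms to cell-like maps. Construction~(B) of subsection~\ref{subsection:background_on_characteristics} therefore applies directly to this functor, yielding an element of
\[\holim\Bigl(\holim_{(\sigma,\theta)}A^\%(E_\sigma^\theta\times X)\xrightarrow{\bar\pi_*}\holim_{(\sigma,\theta)}A^\%(E_\sigma^\theta)\Bigr),\]
which is precisely a path from $\pi_*\chi_e(\tilde\calP\times X)$ to $\chi_e(\tilde\calP)$. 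Because construction~(B) projects to construction~(A) under $w\calR^\%\to w\Rfd$, this path lifts the $A$-theory path you already built. This is the content of Lemma~\ref{strspaces:lax_naturality_of_excisive_characteristic} in the paper. No separate ``contraction homotopy'' enters the argument at the excisive level; the contractibility of $X$ is used only to make $\pi$ cell-like.
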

 
Using this Theorem, we conclude
\begin{align*}
\pi_*\circ \tau\circ (\times X) 
& \simeq \pi_\bigstar\circ \chi^\%\circ(\times X) - \pi_\bigstar\circ\chi^\%(\id_{p\times X})\\
& \simeq \pi_\bigstar\circ \chi^\%\circ(\times X) - \pi_\bigstar\circ\chi^\%\circ(\times X)(\id_p)\\
& \simeq \chi^\%-\chi^\%(\id_p)\\
& \simeq \tau,
\end{align*}
hence we obtain \eqref{pr:special_product_rule}, so that Theorem \ref{pr:general_product_rule} follows in the case when $X$ is contractible.

The remainder of this section will be devoted to the proof of Theorem \ref{pr:special_product_rule_for_excisive_characteristic}. Recall that
\[\chi^\%=\kappa_{(\chi,\chi^n_e)}\]
where $(\chi,\chi^n_e)$ is the characteristic pair for the universal ``bundle'' $\calP$ over $\calB$ and the universal ``bundle'' of compact $n$-manifolds $\tilde\calP^n$ over $\tilde\calB^n$, and $\kappa_{(\chi,\chi^n_e)}$ is given as in Proposition \ref{whtors:construction_of_excisive_characteristic}.

Now consider the ``bundles'' $\calP\times X$ and $\tilde\calP^n\times X$; they also produce a characteristic pair $(\chi',\chi'_e)$ for the functors
\[G'(\sigma):=A^\%(E_\sigma\times X),\quad G(\sigma):=A(E_\sigma\times X)\]
on $\simp \calB$.

The main part of the proof will be to show that the characteristic pairs $(\chi,\chi^n_e)$ and $(\pi_*\chi', \pi_*\chi'_e)$ are homotopic. Assuming this for the moment, the proof of Theorem \ref{pr:special_product_rule_for_excisive_characteristic} is a formal consequence:

\begin{proof}[of Theorem \ref{pr:special_product_rule_for_excisive_characteristic}]
Since the product ``bundles'' $\calP\times X$ and $\tilde\calP^n\times X$ are classified by maps to $\calP$ and $\calP^{n+k}$ respectively, the characteristic pair $(\chi', \chi'_e)$ is obtained by restriction of the characteristic pair $(\chi,\chi^{n+k}_e)$. It follows that
\[\kappa_{(\chi',\chi'_e)} \simeq \kappa_{(\chi,\chi^{n+k}_e)}\circ(\times X).\]

Denote by $\gamma$ the path from $(\pi_*\chi', \pi_*\chi'_e)$ to $(\chi, \chi_e)$ whose existence we assume. Using the above identity together with parts (ii) and (iii) of Proposition \ref{whtors:construction_of_excisive_characteristic}, we have
\begin{align*}
\multbox
\chi^\% & \simeq \kappa_{(\chi,\chi^n_e)}\\
& \simeq t_\gamma \circ \kappa_{(\pi_*\chi',\pi_*\chi'_e)}\\
& \simeq t_\gamma\circ\pi_* \circ \kappa_{(\chi',\chi'_e)}\\
& \simeq \pi_\bigstar \circ \kappa_{(\chi,\chi^{n+k}_e)}\circ (\times X)\\
& \simeq \pi_\bigstar \circ\chi^\%\circ (\times X).
\emultbox
\end{align*}
\end{proof}

Thus it remains to show that the characteristic pairs $(\chi,\chi_e^n)$ and $(\pi_*\chi', \pi_*\chi'_e)$ are indeed homotopic. Recall that by Lemma \ref{DWW:naturality_of_A_theory_characteristic}, there is a canonical path $\pi_*\chi'\to\chi$. Moreover we have:

\begin{lem}\label{strspaces:lax_naturality_of_excisive_characteristic}
\begin{enumerate}
\item Let $X$ be a contractible compact manifold, $p\colon E\to B$ a bundle of compact topological manifolds, and consider the bundle $p\times X\colon E\times X\to B$. The canonical path 
\[\pi_*\chi(p\times X)\rightsquigarrow \chi(p)\]
between the parametrized $A$-theory characteristics, induced by the projection $\pi\colon E\times X\to E$, lifts along the fiberwise assembly map $\alpha$, up to homotopy relative endpoints, to a path 
\[\pi_*\chi_e(p\times X)\rightsquigarrow \chi_e(p)\]
between the Dwyer-Weiss-Williams excisive characteristics.
\item Let $Y$ be another contractible compact manifold. If $\pi'\colon E\times X\times Y\to E\times X$ is the projection, then the two possible ways of lifting the path
\[\pi'_*\pi_*\chi(p\times X\times Y)\rightsquigarrow  \chi(p)\]
are homotopic relative endpoints. 
\end{enumerate}
\end{lem}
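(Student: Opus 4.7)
My plan is to mimic the proof of Lemma \ref{DWW:naturality_of_A_theory_characteristic} but using construction (B) from Section \ref{subsection:background_on_characteristics} in place of construction (A). The crucial geometric input is that for any compact ENR $Y$, the projection $Y\times X \to Y$ is cell-like, since its fibers are the contractible compact ENR $X$. Thus whenever we combine base-change homeomorphisms (available from the definition of $tB_\bullet$) with such projections, the hypotheses of construction (B) are met, and the resulting characteristic will automatically refine the $A$-theory characteristic of construction (A) via the natural transformation $w\calR^\% \to w\Rfd$.

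For part (i), I will consider the product category $\calC := \simp tB_\bullet \times [1]$, where $[1] = \{0 \to 1\}$, and define a functor $f\colon \calC \to \CGHaus$ by $f((\sigma,\theta), 0) := E_\sigma^\theta \times X$ and $f((\sigma,\theta), 1) := E_\sigma^\theta$, sending each morphism $0 \to 1$ to the projection and each morphism from $\simp tB_\bullet$ to the corresponding homeomorphism. Since all images are compact ENRs and all morphisms are cell-like, construction (B) produces a characteristic for $w\calR^\% \circ f$; after geometric realization and the natural transformation $\vert w\calR^\%(\cdot)\vert \to A^\%(\cdot)$, this yields an element of
\[\holim_\calC A^\% \circ f \;\simeq\; \holim\Bigl(\holim_{(\sigma,\theta)} A^\%(E_\sigma^\theta \times X) \xrightarrow{\pi_*} \holim_{(\sigma,\theta)} A^\%(E_\sigma^\theta)\Bigr),\]
i.e., a path between the two endpoints. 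Unwinding definitions identifies the endpoint at $1$ with $\chi_e(p)$ and the endpoint at $0$ with the restriction of $\chi_e(p\times X)$ along the map $\simp tB_\bullet(p) \to \simp tB_\bullet(p\times X)$, $(\sigma,\theta) \mapsto (\sigma, \theta\times \id_X)$; under the homotopy equivalence $\holim_{(\sigma,\theta)} \simeq \holim_\sigma$ this latter element is $\pi_*\chi_e(p\times X)$. Since the transformation $w\calR^\% \to w\Rfd$ collapses construction (B) to construction (A), our path projects along $\alpha$ to the canonical path of Lemma \ref{DWW:naturality_of_A_theory_characteristic}, giving the required lift.

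For part (ii), the same method will apply to the category $\simp tB_\bullet \times [2]$, where $[2] = \{0 \to 1 \to 2\}$: the functor sends the slices $i = 0, 1, 2$ over $(\sigma,\theta)$ to $E_\sigma^\theta \times X \times Y$, $E_\sigma^\theta \times X$, and $E_\sigma^\theta$ respectively, with morphisms the corresponding projections, all cell-like. Construction (B) again applies and yields a ``$2$-simplex of paths'' whose three edges are the direct lift (using $X\times Y$) and the two stepwise lifts (using $Y$, then $X$) supplied by part (i); its interior furnishes the required homotopy rel endpoints between the direct lift and the concatenation. The main obstacle in both parts will be the careful bookkeeping to recognize the endpoints of the constructed paths as the specified characteristics, which is exactly where one invokes the Dwyer-Weiss-Williams equivalence $\holim_\sigma \simeq \holim_{(\sigma,\theta)}$ and tracks how restriction along $\theta \mapsto \theta\times\id_X$ interacts with it; once this is set up, compatibility with the $A$-theory path is automatic by the naturality of $w\calR^\% \to w\Rfd$.
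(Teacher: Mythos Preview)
Your proposal is correct and follows essentially the same approach as the paper: both apply construction (B) to the functor on $\simp tB_\bullet \times [1]$ (resp.\ $\simp tB_\bullet \times [2]$) sending $((\sigma,\theta),i)$ to $E_\sigma^\theta$ times the appropriate factor, using that the projections are cell-like because $X$ (and $Y$) are contractible compact ENRs. Your identification of the endpoint at $0$ via the restriction along $(\sigma,\theta)\mapsto(\sigma,\theta\times\id_X)$ is exactly what the paper spells out in the remark preceding its proof.
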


\begin{rem}
Recall that, formally, the characteristic 
\[\chi_e(p)\in\holim_{(\sigma,\theta)\in\simp tB_\bullet} A^\%(E_\sigma^\theta)\]
is a characteristic for a functor defined on the category of simplices on a simplicial set $tB_\bullet$, where a simplex is a pair $(\sigma,\theta)$ of a simplex $\sigma$ of $B_\bullet$ and an equivalence relation $\theta$ on $E_\sigma$. Consequently, 
\[\chi_e(p\times X)\in\holim_{(\sigma,\theta')\in\simp t'B_\bullet} A^\%((E_\sigma \times X)^{\theta'})\]
where a simplex of $t'B_\bullet$ is a simplex $\sigma$ of $B_\bullet$ and an equivalence relation $\theta'$ on $E_\sigma \times X$. Notice that there is an inclusion of simplicial sets
\[i\colon tB_\bullet\to t'B_\bullet,\quad (\sigma,\theta) \mapsto (\sigma, \theta\times X)\]
where $\theta\times X$ is the product relation of $\theta$ with the identity relation on $X$.

The map $\pi_*$ used in Lemma \ref{strspaces:lax_naturality_of_excisive_characteristic} is defined to be the composite
\[\holim_{(\sigma,\theta')\in\simp t'B_\bullet} A^\%((E_\sigma\times X)^{\theta'})
   \xrightarrow{i^*}
\holim_{(\sigma,\theta)\in\simp tB_\bullet} A^\%(E_\sigma^\theta\times X)
\xrightarrow{\bar\pi_*}
\holim_{(\sigma,\theta)\in\simp tB_\bullet} A^\%(E_\sigma^\theta)
\]
where the second map is induced by the projection $E_\sigma^\theta\times X\to E_\sigma^\theta$.

The diagram
\[\xymatrix{
{\holim_{\sigma\in\simp B_\bullet}  A^\%(E_\sigma\times X)} \ar[rr]^{\pi_*} \ar[d]^\simeq
&& {\holim_{\sigma\in\simp B_\bullet}  A^\%(E_\sigma)} \ar[d]^\simeq
\\
{\holim_{(\sigma,\theta')\in\simp t'B_\bullet} A^\%((E_\sigma\times X)^{\theta'})} \ar[rr]^{\pi_*}
&& \holim_{(\sigma,\theta)\in\simp tB_\bullet} A^\%(E_\sigma^\theta)
}\]
is commutative, with vertical maps which are homotopy equivalences (for non-trivial reasons, as mentioned before). 
\end{rem}

\begin{proof}[of Lemma \ref{strspaces:lax_naturality_of_excisive_characteristic}]
We can view the natural transformation
\[E_\sigma^\theta\times X \to E_\sigma^\theta\]
as a functor $F$ on the category $\simp tB_\bullet\times [1]$ which sends all objects to compact ENRs. As $X$ is a contractible compact manifold, $F$ also sends all morphisms to cell-like maps. Following the procedure of section \ref{subsection:background_on_characteristics}, we obtain an element 
\[\bar\chi\in \holim(F) = \holim\biggl(\holim_{(\sigma,\theta)\in\simp tB_\bullet} A^\%(E_\sigma^\theta\times X)
\xrightarrow{\bar\pi_*}
\holim_{(\sigma,\theta)\in\simp tB_\bullet} A^\%(E_\sigma^\theta)\biggr).\]
Spelled out, $\bar\chi$ is the element $i^*\chi_e(p\times X)$ together with a path from $\pi_*\chi_e(p\times X)=\bar\pi_*i^*\chi_e(p\times X)$ to $\chi_e(p)$. This construction is obviously compatible with the construction of the path from $\pi_*\chi(p\times X)$ to $\chi(p)$ from section \ref{subsection:some_remarks_on_naturality}.

This shows part (i) of the Lemma. Part (ii) can be proven similarly by considering the excisive characteristic of a functor defined on $\simp tB_\bullet \times [2]$.
\end{proof}

\section{Additivity}\label{section:additivity}

One of the major properties of classical Whitehead torsion is additivity. Building on results of Badzioch-Dorabia\l a, we are going to show that the parametrized Whitehead torsion enjoys an analogous property. First we proceed to formulate the parametrized version. For a paracompact base space $B$, let us form a category where an object is an object of $\mathbf{Fib}(B;F)$ for some homotopy finitely dominated space $F$, and a morphism $p\to p'$ is a fiberwise map from $p$ to $p'$. With this language, let $\square$ denote the following commutative diagram in this category
\begin{equation}\label{strspaces:diagram_square}
\xymatrix{
p_0 \ar[r] \ar[d] \ar[rd]^{j_0} & p_1 \ar[d]^{j_1}\\
p_2 \ar[r]^{j_2} & p_3
}\end{equation}
where $p_i$ is an object of $\mathbf{Fib}(B;F_i)$ for $i=0,1,2,3$ and where we assume that all maps on the level of total spaces are cofibrations. We also assume that the total space $E(p_3)$ is the push-out of the total spaces $E(p_1)$ and $E(p_2)$ over $E(p_0)$.

By an $n$-dimensional structure on $\square$, we mean an extension of this diagram to a commutative cube 
\[\xymatrix@=1.5ex{
q_0 \ar[rr] \ar[dd] \ar[rd] && q_1  \ar[dd]\ar[rd] \\
& p_0 \ar[rr] \ar[dd] && p_1 \ar[dd]\\
q_2 \ar[rr] \ar[rd] && q_3 \ar[rd]\\
&p_2 \ar[rr] && p_3
}\]
such that $q_i$ are objects of the categories $\mathbf{Bun}_n(B;F_i)$ for $i=1,2,3$ respectively, $q_0$ is an object of $\mathbf{Bun}_{n-1}(B;F_0)$, all the maps $q_i\to p_i$ are fiber homotopy equivalences, and the $q$-square is a codimension 1 splitting of $q_3$. By this we mean that all bundles $q_i$ are locally flat subbundles of $q_3$, of codimension 0 for $i=1,2$ and of codimension 1 of $i=0$, and that the total space of $q_0$ is the intersection of the total spaces of $q_1$ and $q_2$. The $n$-dimensional structures on $\square$ form the zeroth term of a simplicial set $\calS_n(\square)_\bullet$, with $k$-simplices such diagrams parametrized over $\Delta^k$. Let $\calS_n(\square)$ be its geometric realization. It comes with forgetful maps $\beta_i\colon \calS_n(\square)\to\calS_n(p_i)$ for $i=1,2,3$, and $\beta_0\colon\calS_n(\square)\to\calS_{n-1}(p_0)$. 

\begin{thm}[(Additivity)]\label{strspaces:additivity}
The following diagram commutes up to homotopy:
\[\xymatrix{
{\calS}_n(\square) \ar[rr]^{\beta_3} \ar[d]^(0.4){\prod_{i=0}^2 \tau\circ \beta_i} && {\calS}_n(p_3) \ar[d]^(0.4)\tau\\
{\prod_{i=0}^2} \sections{\Omega\Wh_B(E_i)}{B} \ar[rr]^{j_{1*}+j_{2*}-j_{0*}} && {\sections{\Omega\Wh_B(E)}{B}}
}\]
\end{thm}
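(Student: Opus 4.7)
The plan is to reduce additivity of $\tau$ to an additivity property of the parametrized excisive characteristic $\chi^\%$, and then to deduce the latter from a corresponding identity at the level of characteristic pairs in a universal situation, invoking part (iv) of Proposition \ref{whtors:construction_of_excisive_characteristic}.

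First I would observe that, because $\tau = \chi^\% - \chi^\%(\mathrm{id})$ and the identity structure on $p_3$ tautologically decomposes via the identity structures on $p_0,p_1,p_2$ (which form a codimension-$1$ split of $p_3$), the correction terms $\chi^\%(\mathrm{id}_{p_i})$ cancel in the asserted identity. Hence it suffices to prove
\[\chi^\%\circ\beta_3 \;\simeq\; j_{1\bigstar}\chi^\%\beta_1 + j_{2\bigstar}\chi^\%\beta_2 - j_{0\bigstar}\chi^\%\beta_0\]
as maps from $\calS_n(\square)$ into $\hofib_{\chi(p_3)}\bigl(\holim_\sigma A^\%(E_{3,\sigma})\to\holim_\sigma A(E_{3,\sigma})\bigr)$, where the loop sum is taken using the infinite loop space structure on the target.

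Next I would pass to the universal setting, extending the classifying space machinery of section \ref{strspaces} to squares of fibrations with a compatible codimension-$1$ splitting. Concretely, one defines $\mathbf{Fib}(B;\square)$ whose objects are diagrams of the form \eqref{strspaces:diagram_square} over $B$, and analogously $\mathbf{Bun}_n(B;\square)$ for $n$-dimensional codimension-$1$ splittings; these give rise to universal families $\calQ_0,\calQ_1,\calQ_2,\calQ_3$ over a classifying space $\tilde\calB(\square)$, fitting into a pushout cube, with $\calS_n(\square)$ realized as a space of lifts. Each $\calQ_i$ gives a characteristic pair $(\chi(\calQ_i),\chi_e(\calQ_i))$ for the functors $\sigma\mapsto A(\calE_{i,\sigma})$ and $\sigma\mapsto A^\%(\calE_{i,\sigma})$ on $\simp\tilde\calB(\square)_\bullet$; the forgetful maps produce a characteristic pair for each vertex $p_i$ of $\square$, and $\kappa$ recovers the corresponding $\chi^\%\circ\beta_i$ by Proposition \ref{whtors:construction_of_excisive_characteristic}(iii).

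The key input is then an additivity identity of characteristic pairs at the universal level: a canonical path
\[(\chi(\calQ_3),\chi_e(\calQ_3)) \;\simeq\; j_{1*}(\chi(\calQ_1),\chi_e(\calQ_1)) + j_{2*}(\chi(\calQ_2),\chi_e(\calQ_2)) - j_{0*}(\chi(\calQ_0),\chi_e(\calQ_0))\]
in the loop space structure. For the $A$-theoretic characteristic $\chi$ this follows from the additivity theorem of Waldhausen applied fibrewise on each simplex, combined with the inclusion-exclusion pushout $\calE_3 = \calE_1\cup_{\calE_0}\calE_2$; for the excisive refinement $\chi_e$ this is the parametrized version of Badzioch--Dorabia{\l}a's additivity, using Mayer--Vietoris for $A^\%$ and the cell-like maps provided by the splitting. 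Given this identity, part (iv) of Proposition \ref{whtors:construction_of_excisive_characteristic} immediately yields the required loop-sum decomposition of $\kappa$ and hence the theorem.

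The main obstacle is the third step: constructing the two mutually compatible paths, one in $\holim A(\calE_{3,\sigma})$ and one lifting it in $\holim A^\%(\calE_{3,\sigma})$, that together witness additivity of the characteristic pair. The $A$-theoretic path amounts to a natural Mayer--Vietoris identity for Waldhausen's characteristic of a pushout of retractive spaces, the subtlety being the codimension-$1$ stratum $\calQ_0$, whose $(n-1)$-dimensional fibres enter with the opposite sign; the lift to the excisive level relies on the fact that the splitting is locally flat, so that the inclusions of $\calE_{i,\sigma}^\theta$ into $\calE_{3,\sigma}^\theta$ are cell-like at the quotient level and hence compatible with construction (B) of section \ref{subsection:background_on_characteristics}. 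Once both paths are constructed and seen to project correctly under the assembly transformation, the rest of the argument is formal.
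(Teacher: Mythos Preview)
Your proposal is correct and follows essentially the same route as the paper: set up classifying spaces for split fibrations and bundles, obtain a characteristic pair on the universal level, establish its additive decomposition, and then invoke part (iv) of Proposition~\ref{whtors:construction_of_excisive_characteristic} to deduce additivity of $\chi^\%$ and hence of $\tau$. The only difference worth noting is that the ``main obstacle'' you flag---constructing the two compatible additivity paths for $\chi$ and $\chi_e$---is not carried out directly in the paper but is imported as Theorem~\ref{whtors:additivity_of_DWW} from Dorabia{\l}a and Badzioch--Dorabia{\l}a; you need not reprove it.
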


\begin{rem}
As pointed out in section \ref{whtors}, the section spaces appearing in the Additivity theorem are really certain homotopy limit spaces which carry the structure of infinite loop spaces. This structure is used to define the sum appearing in the lower line of the diagram. 
\end{rem}

The proof consists, first, of describing $\calS_n(\square)$ as a suitable space of lifts, by considering universal bundles and fibrations with splittings. This will follow from the machinery presented in section \ref{strspaces} if we can establish certain formal properties of the ``bundle theories'' involved. Then we will make use of additivity results for the Dwyer-Weiss-Williams homotopy invariant and excisive characteristics, obtained by \cite{Dorabiala(2002)} and \cite{Badzioch-Dorabiala(2007)}, respectively. 

The fibers of each of the fibrations in $\square$ fit into a diagram
\[\xymatrix{
F_0 \ar[r] \ar[d] & F_1 \ar[d]\\
F_2 \ar[r] & F_3
}\]
which we are going to abbreviate $(F_i)$. Now consider, for a space $B$, the category $\mathbf{Fib}(B;(F_i))$ of all diagrams \eqref{strspaces:diagram_square} satisfying the conditions stated there, such that there are compatible homotopy equivalences between the fibers of $p_i$ and $F_i$. Morphisms are commuting diagrams of fiber homotopy equivalences. Pulling back with $f\colon B'\to B$ defines an object in $\mathbf{Fib}(B';(F_i))$. In fact, the cofibration condition still holds on the induced fibrations by Corollary \ref{fibr:cofibration_preserved_under_pullback}.

Following the construction of section \ref{strspaces}, we obtain a functor \[\mathbf{Fib}(B;(F_i))\colon\cpCW\op\to\cat\]
and therefore a simplicial category. Finally we obtain a simplicial set 
\[\Fib(B;(F_i))_\bullet := N_0\mathbf{Fib}(B;(F_i))_\bullet.\]
Similarly consider the category $\mathbf{Bun}_n(X;(F_i))$ whose objects are those objects of $\mathbf{Fib}(B;(F_i))$ such that $p_0$ is an object of $\mathbf{Bun}_{n-1}(B;F_0)$, $p_i$ are objects in $\mathbf{Bun}_n(B;F_i)$ for $i=1,2,3$, and the square is a codimension 1 splitting of $p$. A morphism in this category is to be a commuting diagram of fiberwise homeomorphisms. This category gives rise to a simplicial set $\Bun_n(B;(F_i))_\bullet$. 

\begin{lem}
The functors $\mathbf{Bun}_n(B;(F_i))$ and $\mathbf{Fib}(B;(F_i))$ satisfy the Amalgamation property, the Straightening property, and the Fill-in property if $B$ is metrizable ULC.
\end{lem}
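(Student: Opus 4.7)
My plan is to reduce each of the three properties for the diagram categories to the corresponding property for the individual categories $\mathbf{Fib}(B;F_i)$ and $\mathbf{Bun}_n(B;F_i)$, which were already verified in the earlier lemma. The observation that makes this reduction work is that all the extra data defining an object of $\mathbf{Fib}(B;(F_i))$ -- namely, that each of the maps on total spaces $E(p_0)\to E(p_i)$ is a cofibration and that $E(p_3)$ is the pushout $E(p_1)\cup_{E(p_0)}E(p_2)$ -- together with the extra data in $\mathbf{Bun}_n(B;(F_i))$ -- that the $q$-square is a locally flat codimension-one splitting of $q_3$ -- are \emph{local} in the base $B$ and are preserved by the standard constructions (amalgamation of fibrations, homotopy lifting, fiberwise mapping cylinders, collar neighborhoods).

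For Amalgamation: given a pushout square of compact CW spaces as in the property, an object over the pushout corresponds to a choice of four compatible objects over $X_1$ and $X_2$, matching over $X_0$. Apply the single-functor Amalgamation property to each of $p_0, p_1, p_2, p_3$ separately. The cofibration and pushout-of-total-spaces conditions are local on the base, so they persist after amalgamation; similarly for the locally flat splitting in the bundle case. The pullback criterion in the remark after the Amalgamation property lets one handle morphisms coordinate-wise in the same way.

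For Straightening: apply the straightening construction (homotopy lifting for fibrations, product-with-$I$ for bundles) to each $p_i$ individually, but do it inductively, starting from $p_0$, extending along the cofibrations $p_0\to p_1$ and $p_0\to p_2$ via the homotopy extension property, and finally defining the morphism for $p_3$ by the universal property of the pushout $E(p_1)\cup_{E(p_0)}E(p_2)$. In the bundle case one performs the straightening first on a bicollar of the codimension-one splitting and then extends, so the splitting is preserved.

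For Fill-in: in the fibration case, the mapping cylinder functor commutes with pushouts of cofibrations, so applying it to the whole diagram produces simultaneously fill-ins for $p_0, p_1, p_2, p_3$ and a diagram structure with cofibrations and pushout property on total spaces; relative fill-ins are obtained the same way, using Proposition \ref{fibr:fill_in} coordinate-wise. For bundles, one uses that a locally flat codimension-one splitting admits a bicollar, so the fiberwise mapping-cylinder-type construction for bundles can be performed in a way compatible with the bicollar coordinate, ensuring that the resulting fill-in over $\Delta^k\times I$ is again a codimension-one splitting. The main obstacle is precisely this last point: producing bundle fill-ins for the whole cube that retain the codimension-one structure. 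This is resolved by doing the fill-in first on the $p_0$-piece and its bicollar, and then extending to the complementary pieces of $p_1, p_2$ by fiberwise mapping cylinders in a way that matches the bicollar data, finally assembling $p_3$ via the pushout of total spaces.
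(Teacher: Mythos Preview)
Your overall strategy---reduce each property to the single-fibration/\-single-bundle case and then verify that the extra diagram data (cofibrations, pushout, codimension-one splitting) survives---is exactly what the paper does. The Straightening argument you give matches the paper's almost verbatim. For Amalgamation your claim that the cofibration condition ``is local on the base'' is the right intuition, but note that this is not immediate: the paper invokes Proposition~\ref{fibr:glueing_and_cofibrations} (which in turn rests on the NDR-pair argument of Lemma~\ref{fibr:cofibration_condition_on_pairs}) to show that a fiberwise cofibration glued over a pushout of base spaces is again a cofibration.

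The genuine gap is in your Fill-in argument for fibrations. Mapping cylinders do \emph{not} by themselves produce fill-ins here: the fiberwise mapping cylinder of a fiber homotopy equivalence $\varphi\colon E'\to E$ is a fibration over $B$ (by Theorem~\ref{fibr:fiberwise_glueing_2}), but a fill-in must be a fibration over $B\times\Delta^k\times I$ restricting to $E'$ and $E''$ at the two ends. For bundles this works because morphisms are homeomorphisms, so the mapping cylinder is visibly a bundle over $B\times I$; for fibrations it fails. The paper's construction (Proposition~\ref{fibr:fill_in}) uses mapping cylinders only as an intermediate step to replace an arbitrary fiber homotopy equivalence by a cofibration-and-equivalence, and then applies Tulley's construction $T(i)=E'\times\{0\}\cup E\times(0,1]$ (Proposition~\ref{fibr:tulleys_result}) to obtain an honest fibration over $B\times I$. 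For the diagram version one then needs the additional Lemma~\ref{fibr:tulleys_construction_and_cofibrations}, which says that Tulley's construction applied to a square of cofibrations again yields a cofibration $T(i_0)\hookrightarrow T(i_1)$; this is what guarantees that the fill-in for the whole square still has the required cofibration/pushout structure. Your sentence ``mapping cylinder functor commutes with pushouts of cofibrations, so applying it to the whole diagram produces simultaneously fill-ins'' skips precisely this point; citing Proposition~\ref{fibr:fill_in} ``coordinate-wise'' is not enough without the compatibility lemma. The paper packages all of this as Proposition~\ref{fibr:fill_in_with_splitting}.
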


\begin{proof}
The case of bundles being similar to the case in section \ref{strspaces}, we focus on fibrations. For the Amalgamation property, we have to check that the cofibration condition still holds after glueing. This is verified in Proposition \ref{fibr:glueing_and_cofibrations}. For the Straightening property, first use homotopy lifting to straighten each of the fibrations, and then use cofibration arguments to make the diagrams strictly commutative. Finally, the Fill-in property is the content of Proposition \ref{fibr:fill_in_with_splitting}.
\end{proof}

Following the arguments of section \ref{strspaces} again, one sees that whenever $B$ is homotopy equivalent to a locally finite ordered simplicial complex, there is a weak homotopy equivalence
\[\calS_n(\square) \xrightarrow{\simeq} \lift{\Bun_n(*;(F_i))}{B}{\square}{\Fib(*;(F_i))}.\]
Denote by $\calQ$ and $\tilde\calQ$ the universal ``bundles'' over $\Fib(*;(F_i))$ and $\Bun(*;(F_i))$. They are of the form
\pagebreak[4]
\[\xymatrix{
{\calQ_0} \ar[d] \ar[r] \ar[rd]^{j_0} & {\calQ_1} \ar[d]^{j_1}
 && {\tilde\calQ_0} \ar[d] \ar[r] \ar[rd]^{j_0} & {\tilde \calQ_1} \ar[d]^{j_1}\\
{\calQ_2} \ar[r]^{j_2} & {\calQ_3}
 && {\tilde \calQ_2} \ar[r]^{j_2} & {\tilde \calQ_3}
}\]

\begin{thm}[({\cite{Dorabiala(2002),Badzioch-Dorabiala(2007)}})]
\label{whtors:additivity_of_DWW}
\begin{enumerate}
\item On the level of Dwyer-Weiss-Williams $A$-theory characteristics, there is a canonical path 
\[j_{1*}\chi(\calQ_1)+j_{2*}\chi(\calQ_2)-j_{0*}\chi(\calQ_0)
\rightsquigarrow\chi(\calQ_3).\]
\item The path from (i) lifts, up to homotopy relative endpoints, to a path  \[j_{1*}\chi_e(\tilde \calQ_1)+j_{2*}\chi_e(\tilde \calQ_2)-j_{0*}\chi_e(\tilde \calQ_0)\rightsquigarrow \chi_e(\tilde \calQ_3)\]
between the Dwyer-Weiss-Williams excisive characteristics.
\end{enumerate}
\end{thm}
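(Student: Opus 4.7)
The strategy is to deduce both parts from the additivity theorems of Dorabiala \cite{Dorabiala(2002)} and Badzioch-Dorabiala \cite{Badzioch-Dorabiala(2007)} applied simplex-wise to the universal cubes $\calQ$ and $\tilde\calQ$, using the characteristic-object formalism of section \ref{subsection:background_on_characteristics}. Before invoking them I would unravel the characteristics: by construction (A), $\chi(\calQ_i)$ is the realization of a characteristic whose value at a simplex $\sigma$ of the base is the retractive space $E(\calQ_i)_\sigma \amalg E(\calQ_i)_\sigma$ over $E(\calQ_i)_\sigma$, and analogously for the excisive version via construction (B). The assumptions on the diagram ensure that for every such $\sigma$ the total space $E(\calQ_3)_\sigma$ is the pushout of $E(\calQ_1)_\sigma$ and $E(\calQ_2)_\sigma$ along $E(\calQ_0)_\sigma$, with cofibrations on each edge, and (for $\tilde\calQ$) the pieces form a codimension-$1$ splitting.

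For part (i), this pushout structure produces, for each $\sigma$, a cofibration sequence in the Waldhausen category $\Rfd(E(\calQ_3)_\sigma)$ of the form
\[ j_{0*}\bigl(E(\calQ_0)_\sigma\amalg E(\calQ_0)_\sigma\bigr) \;\hookrightarrow\; j_{1*}(\cdots)\vee j_{2*}(\cdots) \;\twoheadrightarrow\; E(\calQ_3)_\sigma\amalg E(\calQ_3)_\sigma,\]
and Waldhausen's additivity theorem attaches to it a canonical path in $A(E(\calQ_3)_\sigma)$ realizing the identity $j_{1*}[\calQ_1]+j_{2*}[\calQ_2]-j_{0*}[\calQ_0]\simeq[\calQ_3]$. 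Since the pushout decomposition is preserved by face and degeneracy maps, these paths are natural in $\sigma$ and therefore assemble, via the characteristic-object formalism applied to $\simp\calB_\bullet\times[1]$ (exactly as in the proof of Lemma \ref{DWW:naturality_of_A_theory_characteristic}), to a single element of the corresponding homotopy limit space, i.e.~to the desired path of parametrized characteristics.

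For part (ii), I would run the same argument with $w\calR^\%$ in place of $w\Rfd$ via construction (B); the codimension-$1$ splitting is precisely what guarantees that the decomposition is by compact ENRs with cell-like face maps, so the excisive characteristic is available on each piece. Badzioch-Dorabiala establish the corresponding additivity theorem at the level of $A^\%$, and by construction the path they produce maps, under the fiberwise assembly $\alpha$, to the $A$-theory path of part (i) up to homotopy rel endpoints; this supplies the required lift.

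The main obstacle is arranging that the Waldhausen additivity witnesses produced at each simplex are strictly natural in $\sigma$, so as to yield a genuine element of the homotopy limit rather than merely a homotopy class at each individual simplex. This strictness is the content of the cited papers, where the additivity sequence is made functorial in the Waldhausen category input by working with specific simplicial models of $A$ and $A^\%$ (in particular via the $S_\bullet$-construction).
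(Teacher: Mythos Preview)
The paper does not contain a proof of this theorem: it is stated as a citation of \cite{Dorabiala(2002)} and \cite{Badzioch-Dorabiala(2007)}, with only the remark that the Badzioch--Dorabiala argument, while formulated for closed manifolds, goes through for compact manifolds. So there is no in-paper proof to compare your proposal against.

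That said, your sketch is a fair high-level summary of what those references do, and your identification of the main obstacle---arranging the simplex-wise additivity witnesses to be strictly natural so that they assemble to an element of the homotopy limit rather than just a collection of pointwise paths---is exactly the technical heart of the matter. One small quibble: the specific cofibration sequence you write down is not quite the one used; the standard route is via the two sequences $j_{1*}E_1 \rightarrowtail E_3 \twoheadrightarrow E_3/j_{1*}E_1$ and $j_{0*}E_0 \rightarrowtail j_{2*}E_2 \twoheadrightarrow j_{2*}E_2/j_{0*}E_0$, together with the weak equivalence $j_{2*}E_2/j_{0*}E_0 \to E_3/j_{1*}E_1$ induced by the pushout, which combines (via two applications of Waldhausen additivity) to the desired relation. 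But this does not affect the correctness of your overall outline.
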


Hence the restriction $(\chi',\chi'_e)$ of the excisive pair $(\chi,\chi_e)$ onto $\Bun_n(*,(F_i))$ and $\Fib(*;(F_i))$ decomposes as
\[(\chi',\chi'_e) = (\chi_1,(\chi_e)_1) + (\chi_2,(\chi_e)_2) - (\chi_0,(\chi_e)_0).\] Now part (iv) of Lemma \ref{whtors:construction_of_excisive_characteristic} shows that the parametrized excisive characteristic $\chi^\%$ behaves additively. The additivity for the parametrized torsion is again a formal consequence. We omit the details, which are very similar to the the proof of the product rule.

\begin{rem}
In \cite{Badzioch-Dorabiala(2007)}, the statement is only formulated for closed manifolds. However the proof actually shows the more general case of compact manifolds.
\end{rem}

As an application, we may now prove the general version of the Product rule using the special case proved in section \ref{section:product_rule} and the Additivity theorem:

\begin{proof}[of Theorem \ref{pr:general_product_rule}]
Suppose that $X$ comes provided with a codimension 1 splitting $X=Y \cup_A Z$, and suppose that the claim holds for $Y$, $A$, and $Z$. Then the claim also holds for $X$. In fact, denote by $p\times X\colon E\times X\to B$ the composite fibration and by $\square$ the following square:
\[\xymatrix{
p\times A \ar[r] \ar[d] \ar[rd]^{j_0} & p\times Y \ar[d]^{j_1}\\
p\times Z \ar[r]^{j_2} & p\times X
}\]
with the obvious inclusions. Then taking product with $A$, $Y$, $Z$, and $X$ induces a map
\[P\colon\calS_n(p)\to \calS_{n+k}(\square).\]
Using the Additivity theorem, we conclude:
\begin{align*}
\tau\circ (\times X)& =\tau\circ \beta_3\circ P \\
& \simeq j_{1*}\circ \tau\circ \beta_1\circ P + j_{2*}\circ\tau\circ\beta_2\circ P - j_{0*}\circ\tau\circ \beta_0\circ P\\
& = j_{1*}\circ \tau\circ (\times Y) + j_{2*}\circ\tau\circ(\times Z) - j_{0*}\circ\tau\circ (\times A)\\
& \simeq \chi_e(Y) \cdot i_*\circ\tau + \chi_e(Z) \cdot i_*\circ \tau - \chi_e(A)\cdot i_*\circ \tau\\
& = \chi_e(X) \cdot i_*\circ\tau,
\end{align*}
since the Euler characteristic behaves additively.

Applying this reasoning to the codimension 1 splitting $S^n= D^n \cup_{S^{n-1}} D^n$, one sees inductively that the claim holds for $X=S^n$ for all $n$. Using the special version of the product rule, Theorem \ref{pr:special_product_rule_for_excisive_characteristic}, it follows that the claim also holds for all spaces $X=D^l\times S^n$.

Let now $X$ be arbitrary. We may assume that $\dim X\geq 6$; for otherwise we may replace $X$ by $X\times D^6$. Choose a handlebody decomposition of $X$, which exists by \cite[Essay III, Thm. 2.1]{Kirby-Siebenmann(1977)}. The proof is by induction on the number of handles: If there is only one handle, then $X\cong D^n$, so we are in the special case that has already been proven. In the inductive step, we have a codimension one splitting $X=X'\cup_{D^l\times S^{k-1}} D^l\times D^k$ where the claim holds for all the three spaces by the inductive assumption. The above argument shows that the claim holds for $X$ as well.
\end{proof}

\section{The torsion on the stable structure space}\label{section:sttors}

So far we defined for each space $\calS_n(p)$, $n\in\NN$, a parametrized excisive characteristic $\chi^\%$ and a parametrized torsion map $\tau$, well-defined up to homotopy. Now let $I:=[0,1]$ be the unit interval and consider the stabilization map
\[(\times I)\colon \calS_n(p)\to \calS_{n+1}(p\times I).\]

It follows from section \ref{section:product_rule} that 
\begin{equation}\label{sttors:noncanonical_homotopies}
\pi_\bigstar\circ \chi^\%\circ (\times I)\simeq \chi^\%, \quad \pi_*\circ \tau\circ (\times I)\simeq \tau
\end{equation}
for the projection $\pi\colon p\times I\to p$. The goal of this section is to use these homotopies to define a stabilized version of $\chi^\%$ on
\[\calS_\infty(p):=\hocolim_n \calS_n(p\times I^n)\]
whose restriction onto any of subspaces $\calS_n(p\times I^n)\subset\calS_\infty(p)$ agrees with the unstable $\chi^\%$ defined so far. It follows that the parametrized Whitehead torsion extends to a map
\[\tau\colon\calS_\infty(p)\to\sections{\Omega\Wh_B(E)}{B}\]
as usual by taking the loop space difference with $\chi^\%(p)$.

Notice that the homotopies in \eqref{sttors:noncanonical_homotopies} are not canonical in any way, since for each $n$ the parametrized excisive characteristic is well-defined only up to homotopy. So we need to analyze the construction of $\chi^\%$ more carefully.

Recall that $\chi^\%$ was defined using characteristic pairs. In fact for each $n$ there is a characteristic pair $(\chi, \chi^n_e)$ where $\chi$ is the parametrized $A$-theory characteristic for the universal ``bundle'' $\calE\to\calB=\Fib(*;F)$ and $\chi^n_e$ is the Dwyer-Weiss-Williams excisive characteristic for the universal ``bundle'' $\tilde\calE^n\to\tilde\calB^n=\Bun_n(*;F)$. 

Denote by $\sigma\colon \calB\to\calB$ and $\tilde\sigma\colon \tilde\calB^n\to\tilde\calB^{n+1}$ the stabilization map $(\times I)$. By Lemma \ref{strspaces:lax_naturality_of_excisive_characteristic}, there is a homotopy of characteristic pairs
\[(\pi_*\sigma^*\chi^{n+1}, \pi_*\tilde\sigma^*\chi^{n+1}_e)\rightsquigarrow (\chi^n,\chi^n_e)\]
where $\pi$ denotes either of the projection maps
\[\calE\times I\to\calE,\quad \tilde\calE^n\times I\to\tilde\calE^n.\]
Letting $n$ vary we obtain a whole family of characteristic pairs which are connected by homotopies as above.

The following result (and its proof) is the canonical extension of Proposition \ref{whtors:construction_of_excisive_characteristic}, which dealt with the situation for a single characteristic pair. Some indications of proof will be given below.

\begin{prop}\label{whtors:construction_of_stable_excisive_characteristic}
Such a family of characteristic pairs determines, up to homotopy, a map
\[\calS_\infty(p)\to 
\hofib_{p^*\chi} \biggl(\holim_{\sigma\in\simp B_\bullet} A^\%(E_\sigma) \to \holim_{\sigma\in\simp B_\bullet} A(E_\sigma)\biggr)
\]
\end{prop}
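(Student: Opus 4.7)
The plan is to globalize the construction of $\kappa_{(\chi,\chi_e)}$ from the proof of Proposition~\ref{whtors:construction_of_excisive_characteristic} by replacing the single characteristic pair by an $\NN$-indexed family connected by homotopies, and the single space $\calS$ by the homotopy colimit $\calS_\infty(p)$.

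First I would recall that in the proof of Proposition~\ref{whtors:construction_of_excisive_characteristic} the map $\kappa_{(\chi,\chi_e)}$ was built from the following ingredients: a category $\calC = \simp\calS\op$, a functor $G$ on $\calC$ whose value on $(q,H)$ is $\holim(F \circ f_*\circ q_*)$, a zigzag of weak equivalences $\holim G \simeq \map(\calS, \holim(F\circ p_*))$ obtained from Observation~\ref{whtors:observation1}, and a restriction map $\kappa\colon \holim(F\circ f_*)\to\holim G$. The element $\chi_e$ was sent through this machinery to produce the desired map. To generalize, I would enlarge $\calC$ to the category $\calC_\infty$ of simplices of $\calS_\infty(p)$, thought of as pairs consisting of a stabilization level $n\in\NN$ together with a simplex of $\calS_n(p\times I^n)$, with morphisms given by face/degeneracy maps on the simplex direction together with the stabilization maps $(\times I)$ in the $\NN$-direction.

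Next, I would define a functor $G_\infty$ on $\calC_\infty$ whose value on a simplex of $\calS_n(p\times I^n)$ coming from a lift $q$ and homotopy $H$ is $\holim(F\circ (f^n)_*\circ q_*)$, where $F(\sigma) = A(\calE_\sigma)$ and $f^n\colon\tilde\calB^n\to\calB$ is the forgetful map at level $n$. On stabilization morphisms $\calS_n\to\calS_{n+1}$, the functor $G_\infty$ would use the natural maps $\pi_*\sigma^*$ built out of the projection and the stabilization, together with the homotopy of characteristic pairs $(\pi_*\sigma^*\chi^{n+1}, \pi_*\tilde\sigma^*\chi^{n+1}_e)\rightsquigarrow (\chi^n, \chi^n_e)$ supplied as input. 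As in Observation~\ref{whtors:observation1} and its iterated use, restriction along the homotopies gives a zigzag of weak equivalences $\holim G_\infty \simeq \map(\calS_\infty(p),\holim(F\circ p_*))$. Applying the analogous construction to the functor $F'(\sigma)=A^\%(\calE_\sigma)$ and using the fibrewise assembly transformation, any coherent family of characteristic pairs $(\chi^n,\chi^n_e)$ together with the given homotopies produces an element in the relevant $\holim G'_\infty$ whose image in $\holim G_\infty$ is the constant characteristic $\chi$. Interpreting this element as a map $\calS_\infty(p)\to\hofib_{p^*\chi}(\cdots)$ gives the desired result.

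The step I expect to be most delicate is ensuring genuine coherence of the data entering $G_\infty$: the family of characteristic pairs is linked only by pairwise homotopies, whereas the $\holim$-construction asks for a functor. Here one does not need strict compatibility of the homotopies, only that the full data assembles into a characteristic $\vert\calC_\infty/?\vert \to G_\infty$; the second assertion of Observation~\ref{whtors:observation1} (that concatenated homotopies are sent to composed zigzags) combined with the pasting of the finite-level restriction maps $\kappa$ from Proposition~\ref{whtors:construction_of_excisive_characteristic} ensures this. The well-definedness up to homotopy follows from part (ii) of that proposition applied at each level and the observation that the homotopy colimit topology on $\calS_\infty(p)$ is built precisely to record these intermediate homotopies.
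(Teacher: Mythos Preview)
Your approach is essentially the same as the paper's, though organized slightly differently. The paper takes a more modular route: rather than building one big category $\calC_\infty$ and functor $G_\infty$, it observes that the construction of the spaces $X^n\simeq\map(\calS_n(p),\holim_\sigma A^\%(E_\sigma))$ from Proposition~\ref{whtors:construction_of_excisive_characteristic} is natural in $n$, and then simply takes $\holim_n X^n$. Since $\holim_n\map(\calS_n(p),-)=\map(\hocolim_n\calS_n(p),-)=\map(\calS_\infty(p),-)$, this immediately gives the required identification, and the family $(\chi^n_e)$ together with the connecting homotopies defines an element of the tower $\holim_n\bigl(\holim_{\sigma\in\simp\tilde\calB^n_\bullet}A^\%(\tilde\calE^n_\sigma)\bigr)$ which maps into $\holim_n X^n$ via the level-wise $\kappa$. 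Your global $\calC_\infty$ is the Grothendieck construction of $n\mapsto\simp\calS_n(p\times I^n)\op$, and $\holim_{\calC_\infty}G_\infty$ decomposes as $\holim_n\holim_{\calC_n}G_n=\holim_n X^n$, so the two constructions agree.

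One point where your write-up is muddled: in the second paragraph you say the functor $G_\infty$ ``uses'' the homotopies of characteristic pairs on stabilization morphisms. It does not; $G_\infty$ is determined solely by the functors $F$, $F'$ and the forgetful/stabilization maps between the classifying spaces. The homotopies of characteristic pairs enter only later, as the data needed to assemble the individual $\chi^n_e$ into a single element of the tower (equivalently, of $\holim G'_\infty$). Keeping these two roles separate --- functor first, element afterwards --- is exactly what makes the paper's ``take $\holim_n$ of the unstable construction'' organization cleaner and avoids the coherence worry you flag at the end.
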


\begin{defn}
\begin{enumerate}
\item The map in Proposition \ref{whtors:construction_of_stable_excisive_characteristic} is called the \emph{stable excisive characteristic.}
\item If $p$ is a bundle of compact topological $n$-manifolds, then the \emph{stable parametrized torsion} is the loop difference
\[\tau:=\chi^\%-\chi^\%(x)\colon \calS_\infty(p)\to\ \holim_{\sigma\in\simp B_\bullet} \Omega\Wh(E_\sigma)\simeq\sections{\Omega\Wh_B(E)}{B}\]
for the structure $x$ given by the inclusion $p\to p\times I^n$.
\end{enumerate}
\end{defn}

Working with families of characteristic pairs instead of characteristic pairs, one can see: 

\begin{thm}
The Composition rule, Additivity property, and the Product rule also hold for the stable parametrized torsion.
\end{thm}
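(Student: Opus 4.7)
The plan is to deduce each of the three stable properties from its unstable counterpart, by lifting all constructions to the level of families of characteristic pairs and invoking the canonical extension of Proposition \ref{whtors:construction_of_excisive_characteristic} announced just above the theorem. Concretely, since $\calS_\infty(p) = \hocolim_n \calS_n(p\times I^n)$, giving a homotopy between two maps out of $\calS_\infty(p)$ amounts to giving a compatible system of homotopies on each $\calS_n(p\times I^n)$ together with a coherence relating them to the structure maps $(\times I)$. The unstable proofs already provide the homotopies on each level; the only task is to check that these homotopies are compatible with the stabilization homotopies coming from Lemma \ref{strspaces:lax_naturality_of_excisive_characteristic}.

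For the composition rule, let $\varphi\colon p'\to p''$ be a fiber homotopy equivalence of bundles. On each level $\calS_n(p'\times I^n)$ the unstable composition rule gives $\tau\circ\varphi_*\simeq \varphi_*\circ\tau + \tau(\varphi)$. The key point from the unstable proof was the relation $\varphi_\bigstar\circ i_0^*\simeq i_1^*$ obtained by trivializing an interpolating fibration in the $I$-direction. This argument goes through verbatim at each stable level; the compatibility with stabilization follows from part (ii) of Lemma \ref{strspaces:lax_naturality_of_excisive_characteristic}, which guarantees that the two possible paths obtained by successive projections agree up to homotopy relative endpoints, so the composition rule homotopies commute (up to higher homotopy) with the structure maps of the hocolim system.

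For additivity, Theorem \ref{whtors:additivity_of_DWW} decomposes each characteristic pair $(\chi,\chi^n_e)$ as a sum of three characteristic pairs coming from the splitting. Taking product with $I$ preserves codimension-one splittings (since $\square\times I$ is again such a diagram), so the decomposition is compatible with the stabilization homotopies; that is, the whole family of characteristic pairs decomposes as a sum of three families. Part (iv) of Proposition \ref{whtors:construction_of_excisive_characteristic}, applied in its family version, then gives the additivity of the stable excisive characteristic. The passage from excisive characteristic to torsion is formal, exactly as in the unstable case.

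For the product rule with a connected compact $k$-manifold $X$, the unstable argument reduced to Theorem \ref{pr:special_product_rule_for_excisive_characteristic} (contractible $X$) and additivity. The contractible case is proved by exhibiting a homotopy of characteristic pairs $(\chi,\chi^n_e)\rightsquigarrow (\pi_*\chi',\pi_*\chi'_e)$ via Lemma \ref{strspaces:lax_naturality_of_excisive_characteristic}. Part (ii) of that Lemma provides precisely the coherence needed when $X$ and a second contractible factor (the stabilization interval) are present, so these homotopies of characteristic pairs assemble into a homotopy of families of characteristic pairs, yielding the stable contractible case. The inductive handle-decomposition argument then proceeds as in the proof of Theorem \ref{pr:general_product_rule}, using the stable Additivity just established. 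The main obstacle throughout is the coherence issue: at each level the excisive characteristic is defined only up to homotopy, so to obtain a map defined on $\calS_\infty(p)$ one must work one categorical level higher, with families of pairs and prescribed homotopies between them; this is exactly what the family version of Proposition \ref{whtors:construction_of_excisive_characteristic} is designed to handle.
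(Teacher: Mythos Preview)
Your proposal is correct and follows precisely the approach the paper indicates. The paper gives no explicit proof of this theorem; it only offers the single sentence ``Working with families of characteristic pairs instead of characteristic pairs, one can see:'' immediately before the statement. You have supplied exactly the details this sentence gestures at: lifting each unstable argument to families of characteristic pairs, with Lemma \ref{strspaces:lax_naturality_of_excisive_characteristic}(ii) providing the coherence between levels, and invoking the family version of Proposition \ref{whtors:construction_of_excisive_characteristic} (announced just before Proposition \ref{whtors:construction_of_stable_excisive_characteristic}) to conclude.
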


\begin{proof}[of Proposition \ref{whtors:construction_of_stable_excisive_characteristic}]
We will only give the construction of the map
\[\calS_\infty(p)\to \holim_{\sigma\in\simp B_\bullet} A^\%(E_\sigma),\]
which is the first half of the construction, leaving the second half to the reader.

The different elements $\chi^n_e$, along with their homotopies, assemble to define an element in the homotopy limit of
\begin{equation}\label{whtors:complicated_homotopy_limit_space}
\holim_{\sigma\in\simp \tilde\calB^1_\bullet} A^\%(\tilde\calE^1_\sigma) \leftarrow \holim_{\sigma\in\simp \tilde\calB^2_\bullet} A^\%(\tilde\calE^2_\sigma) \leftarrow \dots
\end{equation}
where the connecting maps are induced by restriction along $\sigma$ followed by the map induced by $\pi$.

Recall that the main ingredient in the proof of Proposition \ref{whtors:construction_of_excisive_characteristic} was the construction of a space 
\[X^n\simeq \map(\calS_n(p), \holim_{\sigma\in\simp B_\bullet} A^\%(E_\sigma))\]
such that $\chi^n_e(p)$ determines a point in $X^n$. This construction is natural in $n$ so that we obtain a homotopy equivalence
\[
\holim_n X^n\simeq \holim_n \map(\calS_n(p), \holim_{\sigma\in\simp B_\bullet} A^\%(E_\sigma))
= \map(\calS_\infty(p), \holim_{\sigma\in\simp B_\bullet} A^\%(E_\sigma)).
\]
Moreover the family $(\chi^n_e)$, considered as an element in the homotopy limit of \eqref{whtors:complicated_homotopy_limit_space}, determines an element in $\holim_n X^n$ and therefore a map
\[\calS_\infty(p)\to \holim_{\sigma\in\simp B_\bullet} A^\%(E_\sigma)\]
up to homotopy.
\end{proof}

\subsection*{Hoehn's result}

Let $p\colon E\to B$ be a fibration. Given a 0-simplex in $\calS_n(p)$, i.e.~a commutative diagram
\[\xymatrix{
E' \ar[rr]^\varphi  \ar[rd]_q && E \ar[ld]^p \\
& B
}\]
with $q$ a fiber bundle of compact $n$-manifolds, and $\varphi$ a homotopy equivalence, the stable vertical (i.e.~fiberwise) tangent bundle $(\varphi^{-1})^*T^v E'$ of $q$ defines a map $E\to B\TOP$. A family version of this argument determines a map
\[T^v\colon \calS_\infty(p)\to\map(E,B\TOP).\]

\begin{thm}[({\cite{Hoehn(2009)}})]\label{whtors:Hoehns_result}
If $p$ is a bundle of compact topological manifolds, then the param\-etrized torsion and the vertical tangent bundle induce a weak homotopy equivalence
\[(\tau, T^v)\colon \calS_\infty(p)\to \sections{\Omega\Wh_B(E)}{B} \times \map(E,B\TOP).\]
\end{thm}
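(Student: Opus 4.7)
The plan is to deduce this from Hoehn's original theorem, which is already stated (in her formulation) in the introduction: for $p$ a bundle of compact topological manifolds, there is a weak equivalence $\calS_\infty(p)\xrightarrow{\simeq} R(p)\times\map(E,B\TOP)$ where the first coordinate is a specific $K$-theoretic invariant built from the parametrized $A$-theory characteristic of Dwyer-Weiss-Williams. The task is therefore to identify the pair $(\tau, T^v)$ defined in this section with Hoehn's pair, up to an identification of the target spaces.

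First I would identify the targets. The homotopy limit $\bar R(p)=\holim_{\sigma\in\simp B_\bullet}\Omega\Wh(E_\sigma)$ is canonically weakly equivalent to the section space $\sections{\Omega\Wh_B(E)}{B}$ via the zig-zag \eqref{whtors:weak_h_eq_from_holim_to_section_space}, applied to the functor $\sigma\mapsto\Omega\Wh(E_\sigma)$; and the resulting section space is, up to homotopy, exactly Hoehn's $R(p)$, since $\Omega\Wh$ is the fiber of the assembly $A^\%\to A$.

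Second I would compare the maps. Hoehn constructs her invariant by precomposition: she uses the universal ``bundle'' $\tilde\calP^n$ over $\tilde\calB^n$, applies the excisive characteristic of Dwyer-Weiss-Williams in this universal situation, and precomposes with the classifying map $B\to\tilde\calB^n$ supplied by a point of $\calS_n(p)$, finally subtracting the class of the structure $p\times I^n$. The map $\chi^\%=\kappa_{(\chi,\chi^n_e)}$ built in section \ref{whtors} is by design a homotopy-limit reformulation of this very recipe: Proposition \ref{whtors:construction_of_excisive_characteristic} and its stable counterpart \ref{whtors:construction_of_stable_excisive_characteristic} were set up precisely so that the characteristic pair on the universal bundles passes to a map on $\calS_\infty(p)$ landing in the homotopy fiber of the fiberwise assembly. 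Subtracting $\chi^\%(\mathrm{id})$ to obtain $\tau$ matches the subtraction of the basepoint in Hoehn's construction. Transported through the zig-zag of Step 1, the two maps agree up to homotopy.

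Third, the second coordinate $T^v$ is defined identically in both setups (stable vertical tangent bundle, pulled back along $\varphi^{-1}$), so no further comparison is needed. Hoehn's theorem then yields that $(\tau,T^v)$ is a weak homotopy equivalence.

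The main obstacle is purely one of bookkeeping rather than analysis: one must check that the numerous zigzags -- between $\holim$ and section spaces, between the different models of $A$-theory and $A^\%$, between the universal and the pulled-back characteristics, and across the stabilization $n\to n+1$ -- all commute up to coherent homotopy with Hoehn's construction. The genuinely deep input (the weak equivalence itself) comes entirely from Hoehn's theorem, which in turn rests on the stable parametrized $h$-cobordism theorem of Waldhausen-Jahren-Rognes and on Dwyer-Weiss-Williams.
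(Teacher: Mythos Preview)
Your proposal is correct and matches the paper's approach exactly: the paper does not prove this theorem but simply cites Hoehn, noting only that ``Hoehn works with the parametrized excisive characteristic $\chi^\%$ instead of the torsion. Since these maps differ only by a constant, both statements are equivalent.'' Your write-up is a more detailed unpacking of precisely this reduction.
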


In other words, stably, the space of manifold structures on a bundle can be completely described in terms of algebraic $K$-theory and bundle data. Of course, Hoehn works with the parametrized excisive characteristic $\chi^\%$ instead of the torsion. Since these maps differ only by a constant, both statements are equivalent.


\section{The geometric assembly map}\label{fbs}

This section is devoted to the definition and study of the ``geometric assembly map'' on structure spaces. This will lead to the proof of Theorem \ref{intro:main_theorem} which asserts that the geometric assembly map on the level of stable structure spaces has an ``algebraic'' counterpart.

The total space of a bundle of $n$-dimensional compact manifolds over a $k$-dimensional compact manifold is itself an $(n+k)$-dimensional compact manifold. Therefore, given a fibration $p\colon E\to B$, there is a product map
\[\calS_k(B)\times \calS_n(p)\to \calS_{n+k}(E),\]
which on $m$-simplices is defined as follows: If $x\in\calS_k(B)_m$ and $y\in\calS_n(p)_m$ are given by 
\[\xymatrix{
B' \ar[rd] \ar[rr]^\varphi && B\times\Delta^m\ar[ld] && E' \ar[rd] \ar[rr]^\psi && E\times\Delta^m \ar[ld] \\
& {\Delta}^m &&&& B\times\Delta^m
}\]
then the image of $(x,y)$ is given by $\varphi^*E'$, considered as a bundle over $\Delta^m$, together with its canonical map to $E\times\Delta^m$. 

Strictly speaking, of course, this construction again makes use of the chosen bijection $\calU\times\calU\to\calU$.

In particular, if $B$ is itself a $k$-dimensional compact manifold, the identity map on $B$ defines a point in $S_k(B)$; hence we can evaluate the product map to obtain a map
\[\alpha\colon \calS_n(p)\to\calS_{n+k}(E).\]
Notice that the homotopy class of $\alpha$ does not depend on the choice of how we embed $B$ as a subset of $\calU$.

Geometrically $\alpha$ takes all the structures on the fibers of $p$ and assembles them into one big structure. Therefore we are going to call $\alpha$ the \emph{geometric assembly map}. It should not be confused with the homotopy theoretic notion of assembly map as defined in \cite{Weiss-Williams(1995a)}.

\begin{rem}\label{fbs:geometric_and_algebraic_t_over_a_point}
If $B$ is a point, then the map $\alpha\colon \calS_n(E)\to\calS_n(E)$ is canonically homotopic to the identity map. In fact, $\alpha(x)$ and $x$ are canonically homeomorphic. This homeomorphism provides a homotopy.
\end{rem}

As $\alpha$ commutes with stabilization up to a canonical homotopy, there is also a stabilized version
\[\alpha\colon\calS_\infty(p)\to\calS_\infty(E)\]
of the geometric assembly map.

Suppose for simplicity that $B$ is path-connected, and define (after choosing a point $b\in B$):
\[\beta\colon \sections{\Omega\Wh_B(E)}{B}\xrightarrow{\mathrm{Restriction}}\Omega\Wh(F_b)\xrightarrow{\chi_e(B)\cdot j_*} \Omega\Wh(E).\]
Here $j_*$ is induced by the inclusion of the fiber $F_b$ into $E$ and $\chi_e$ denotes the Euler characteristic. The multiplication uses the loop space structure on $\Omega\Wh(E)$. Notice that a different choice of base point $b$ leads to the same map $\beta$, up to homotopy.

\begin{thm}\label{fbs:geometric_and_algebraic_t}
If $p$ is a bundle of compact topological manifolds over a compact connected topological manifold, then the following diagram commutes up to homotopy:
\begin{equation}\label{fbs:diag_geometric_and_algebraic_t}
\xymatrix{
{\calS_\infty}(p) \ar[rr]^\alpha \ar[d]^(0.4)\tau && {\calS_\infty}(E) \ar[d]^\tau \\
{\sections{\Omega\Wh_B(E)}{B}} \ar[rr]^\beta && {\Omega}\Wh(E)
}
\end{equation}
\end{thm}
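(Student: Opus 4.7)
The plan is to mirror the proof of the Product rule (Theorem \ref{pr:general_product_rule}): induct on a handlebody decomposition of $B$, using the Additivity theorem of Section \ref{section:additivity} for codimension-one splittings, and the Product rule to dispatch the base cases. Replacing $p$ with $p\times D^6$ (which alters neither side of the diagram up to canonical homotopy) we may assume $\dim B\geq 6$, so $B$ admits a handlebody decomposition by \cite[Essay III, Thm.~2.1]{Kirby-Siebenmann(1977)}.

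For the base case $B=D^k$, the bundle $p$ is fiber homotopy equivalent to the trivial bundle $F\times D^k\to D^k$, and by Corollary \ref{strspaces:structure_space_as_space_of_lifts} the map
\[
\Phi\colon \calS_\infty(F)\xrightarrow{\simeq}\calS_\infty(p),\quad (\varphi\colon F'\to F)\mapsto (\varphi\times\id_{D^k})
\]
is a weak equivalence, since its target is a space of lifts over the contractible base $D^k$. By construction the composite $\alpha\circ\Phi\colon\calS_\infty(F)\to\calS_\infty(F\times D^k)$ coincides with the product map $(\times D^k)$ of Theorem \ref{pr:general_product_rule}, so the Product rule yields
\[
\tau\circ\alpha\circ\Phi\simeq\chi_e(D^k)\cdot i_*\circ\tau=i_*\circ\tau,
\]
with $i\colon F\to F\times D^k$ the inclusion of a fiber. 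Meanwhile, the pullback naturality of $\tau$ recorded in Section \ref{whtors} identifies the restriction of $\tau(\varphi\times D^k)$ to any fiber $F_b$ with $\tau(\varphi)\in\Omega\Wh(F)$, and hence $\beta\circ\tau\circ\Phi\simeq\chi_e(D^k)\cdot j_*\circ\tau=j_*\circ\tau$, which matches $i_*\circ\tau$ as $i$ and $j$ are both fiber inclusions.

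For the inductive step, suppose $B=B_1\cup_{B_0}B_2$ is a codimension-one splitting along a compact submanifold $B_0$, and that the theorem holds for each $p|_{B_i}$ with $i=0,1,2$. Writing $j_i\colon E_i\hookrightarrow E$ for the inclusions, any structure $x\in\calS_\infty(p)$ restricts to compatible structures $x_i$, and $\alpha(x)$ decomposes as the codimension-one splitting $\alpha_1(x_1)\cup_{\alpha_0(x_0)}\alpha_2(x_2)$. The stabilized Additivity theorem of Section \ref{section:additivity} then gives
\[
\tau(\alpha(x))\simeq j_{1*}\tau(\alpha_1(x_1))+j_{2*}\tau(\alpha_2(x_2))-j_{0*}\tau(\alpha_0(x_0)).
\]
Applying the inductive hypothesis on each piece, choosing a common base point $b\in B_0$ so that $\tau(x_i)|_{F_b}$ is unambiguous, and using inclusion--exclusion $\chi_e(B)=\chi_e(B_1)+\chi_e(B_2)-\chi_e(B_0)$, the right-hand side collapses to $\chi_e(B)\cdot l_*(\tau(x)|_{F_b})=\beta(\tau(x))$, with $l\colon F_b\to E$ the inclusion. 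Iterating this through the handle decomposition of $B$, and handling the attaching regions $D^l\times S^{k-l-1}$ by a secondary induction via $S^m=D^m\cup_{S^{m-1}}D^m$ (exactly as in the proof of Theorem \ref{pr:general_product_rule}), completes the argument.

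The main obstacle will be the precise identification, in the base case, of the composite $\alpha\circ\Phi$ with the product map of Theorem \ref{pr:general_product_rule}, together with the parallel identification on the target side involving restriction-to-fiber and $\beta$. Both depend on the pullback naturality of the parametrized $A$-theory characteristic from Section \ref{subsection:some_remarks_on_naturality}, but care is required to track the infinite loop space structure used to define $\beta$, since this structure is not visible in the section-space model and must be kept honest at the level of homotopy limits throughout the argument.
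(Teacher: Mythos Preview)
Your proposal is correct and follows essentially the same approach as the paper: reduce to a handlebody induction on $B$, with the inductive step supplied by the Additivity theorem and the base pieces handled via the Product rule. The paper organizes this slightly differently, isolating two statements: (1) the claim holds for $p$ if and only if it holds for $p\times\id_X$ with $X$ contractible, and (2) the claim holds for $p$ whenever it holds for the restrictions over a codimension-one splitting $B=B_1\cup_{B_0}B_2$. Your base case $B=D^k$ is the forward direction of (1) applied to the trivial bundle over a point (where the claim is Remark \ref{fbs:geometric_and_algebraic_t_over_a_point}), and your inductive step is exactly (2), which the paper verifies by writing down a cube of spaces and checking each face rather than computing pointwise as you do.

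One small gap to fill: your reduction ``replacing $p$ with $p\times D^6$'' is precisely the reverse direction of statement (1), and needs the same Product rule / Composition rule argument as your base case---you should not just assert that it ``alters neither side up to canonical homotopy.'' The paper proves this direction explicitly (it is the content of the proof of statement (1)). Also, be careful that your inductive step is really an argument about homotopies of \emph{maps}, not just an equation holding pointwise in $\pi_0$; the paper's cube diagram makes this explicit, and you should at least indicate the map $\hat\alpha\colon\calS_\infty(p)\to\calS_\infty(\square)$ that remembers the splitting, so that additivity can be applied as a statement about maps.
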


To prove this result, we are going to show the following two statements:
\begin{enumerate}
\item If $X$ is a compact contractible manifold, then the claim holds for $p$ if and only if the claim holds for the bundle $p\times \id_X$.
\item If $B=B_1\cup_{B_0} B_2$ is a codimension 1 splitting of the base manifold, then the claim holds for $p$ whenever it holds for the restrictions of $p$ onto $B_0, B_1$, and $B_2$.
\end{enumerate}

The homotopy commutativity of \eqref{fbs:diag_geometric_and_algebraic_t} is a formal consequence of these two statements. The proof is parallel to the proof of the general product rule, Theorem \ref{pr:general_product_rule}, and goes by induction on the number of handles in a handlebody decomposition of $B$. Details will be left to the reader.

\begin{proof}[of statement (1)]
Denote by $p'\colon E'\to B'$ the bundle $p\times\id_X$ and let $f\colon B'\to B$ be the projection. Denote by by $\tau'$ the torsion on the structure spaces of $p'$ and $E'$, by $\alpha'$ the geometric assembly map for $p'$ and by $\beta'$ the corresponding map on $K$-theory. We assume that the claim holds for the bundle $p\times\id_X$, i.e.~we assume $\tau'\circ\alpha'\simeq\beta'\circ \tau'$.

Notice that
\begin{enumerate}
\item The composite
\[ \calS_\infty(p)\xrightarrow{f^*} \calS_\infty(p') \xrightarrow{\alpha'} \calS_\infty(E') \xrightarrow{\bar f_*}\calS_\infty(E)\]
agrees with the map $(\times X)\circ \alpha$. 
\item The composite
\[\sections{\Omega\Wh_B(E)}{B} \xrightarrow{f^*} \sections{\Omega\Wh_{B'}(E')}{B'} \xrightarrow{\beta'} \Omega\Wh(E') \xrightarrow{\bar f_*} \Omega\Wh(E)\]
agrees with $\beta$.
\end{enumerate}

Using the product rule and the composition rule (where $\tau(\bar f)=0$), it follows that
\begin{align*}
\tau\circ \alpha & \simeq \tau\circ (\times X)\circ \alpha\\
& \simeq \tau\circ \bar f_*\circ \alpha'\circ f^*\\
& \simeq \bar f_*\circ \tau'\circ \alpha'\circ f^*\\
& \simeq \bar f_*\circ \beta'\circ\tau'\circ f^*\\
& \simeq \bar f_*\circ \beta'\circ f^*\circ \tau\\ 
& \simeq \beta\circ\tau.
\end{align*}

Hence if the claim holds for $p'$, then it holds for $p$. The reverse implication is also true since $f^*$ and $\bar f_*$ are homotopy equivalences.
\end{proof}

\begin{proof}[of statement (2)]
Denote by $j_i\colon B_i\to B$ for $i=0,1,2$ the inclusion. Notice that the codimension 1 splitting of $B$ induces a codimension 1 splitting of $E$ as follows:
\[\xymatrix{
E_0 \ar[r] \ar[d] \ar[rd]^{\bar j_0} & E_2\ar[d]^{\bar j_2}\\
E_1 \ar[r]^{\bar j_1} & E 
}\]
since $p$ is a bundle. Denote by $\square$ this diagram, thought of as a diagram of bundles over the one-point space. Recall the notation $\calS_n(\square)$ from the additivity theorem of the parametrized torsion, and denote by $\gamma_i\colon\calS_n(\square)\to\calS_n(E_i)$ ($i=1,2$), $\gamma_0\colon\calS_n(\square)\to\calS_{n-1}(E_0)$ and $\gamma\colon\calS_n(\square)\to\calS_n(E)$ the forgetful maps. 

The claim follows by considering the following diagram in the homotopy category:
\[\xymatrix{
{\calS_n(p)} \ar[rr]^\alpha \ar@{=}[dd] \ar[rd]_(0.4)\tau && {\calS_{n+k}(E)} \ar[rd]^\tau \\
& {\sections{\Omega\Wh_B(E)}{B}} \ar[dd]_(0.35){\prod_{i=0}^2 j_i^*} \ar[rr]^(0.4)\beta && {\Omega}\Wh(E)\\
{\calS_n(p)} \ar'[r][rr]^(0.35){\hat \alpha} \ar[rd]_(0.4){\prod_{i=0}^2 j_i^*\circ\tau} &&  {\calS_{n+k}(\square)}\ar'[u][uu]^(0.35){\gamma} \ar[rd]_(0.4){\prod_{i=0}^2 \tau\circ\gamma_i}\\
& {\prod_{i=0}^2}\sections{\Omega\Wh_{B_i}(E_i)}{B_i} \ar[rr]^\beta && {\prod_{i=0}^2} \Omega\Wh(E_i) \ar[uu]^(0.7){\bar j_{1*} + \bar j_{2*} - \bar j_{0*}}
}\]
Here $\hat \alpha$ is to denote the obvious map which forgets the base space but remembers the codimension 1 splitting. 

To show that the top square commutes, it is enough to check that all the other squares are commutative.

The commutativity of the left-hand square is purely formal, and the bottom square commutes by naturality of the torsion. The back square is evidently commutative. The commutativity of the front square follows from the definition of $\beta$ and the additivity of the Euler characteristic,
\[\chi_e(B)=\chi_e(B_1)+\chi_e(B_2)-\chi_e(B_0).\]
Finally, the Additivity theorem \ref{strspaces:additivity} guarantees that the right-hand square is commutative.
\end{proof}

For the next result, recall the vertical tangent bundle map
\[T^v\colon \calS(p)\to\map(E,B\TOP)\]
from section \ref{section:sttors}. In the case where $B$ is a point, we write $T:=T^v$ since in this case the vertical tangent bundle is just the tangent bundle.

\begin{prop}\label{fbs:geometric_assembly_on_tangent_bundles}
The diagram
\begin{equation}\label{fbs:diagram_for_tangent_bundles}
   \xymatrix{
{\calS}(p) \ar[rr]^\alpha \ar[d]^{T^v} && {\calS}(E) \ar[d]^{T} \\
{\map}(E,B\TOP) \ar[rr]^{(+p^*TB)} && {\map(E,B\TOP)}
}
\end{equation}
commutes up to homotopy, where the lower horizontal map is given by adding the pull-back of the tangent bundle of $B$.
\end{prop}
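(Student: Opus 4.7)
The plan is to unfold the definitions of $T^v$ and $T$ on a $0$-simplex of $\calS(p)$ and then invoke the classical Whitney-sum splitting of the tangent microbundle of the total space of a topological fiber bundle of manifolds. A $0$-simplex of $\calS(p)$ is a fiber homotopy equivalence $\varphi\colon E'\to E$ over $B$, where $p'\colon E'\to B$ is a bundle of compact $n$-manifolds. By definition, $T^v(\varphi)$ is the map $E\to B\TOP$ classifying the stable microbundle $(\varphi^{-1})^*\tau^v(p')$, while $\alpha(\varphi)$ is the same homotopy equivalence now viewed as a manifold structure on the total space $E$, and hence $T(\alpha(\varphi))$ is the map $E\to B\TOP$ classifying $(\varphi^{-1})^*\tau(E')$, the stable tangent microbundle of the $(n+k)$-manifold $E'$.

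The key input is then the classical Whitney-sum decomposition for a topological fiber bundle $p'\colon E'\to B$ whose base and fibers are topological manifolds:
\[
\tau(E')\;\cong\;\tau^v(p')\,\oplus\,(p')^*\tau(B),
\]
valid as stable microbundles. This splitting is standard in the smooth and PL categories and holds in the topological category after stabilization; one way to see it is to choose a fiber-preserving embedding $E'\hookrightarrow B\times\RR^N$ and identify the normal microbundle of $E'$ in $B\times\RR^N$ with the fiberwise normal microbundle of $p'$. Pulling this identity back along $\varphi^{-1}$ and taking classifying maps into $B\TOP$ immediately gives
\[
T(\alpha(\varphi))\;\simeq\;T^v(\varphi)\,+\,p^*TB
\]
at the level of $\pi_0$, i.e.\ on $0$-simplices.

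To upgrade this to a homotopy of simplicial maps, one runs the same argument over a $k$-simplex: a $k$-simplex of $\calS(p)$ is a bundle $p'\colon E'\to B\times\Delta^k$ together with a fiber homotopy equivalence to $p\times\id_{\Delta^k}$, and both the vertical tangent microbundle of $p'$ and the tangent microbundle of $E'$ (viewed as a family over $\Delta^k$ of manifold structures on $E$) admit the same Whitney-sum relation over $B\times\Delta^k$. Concatenating these simplicial identifications yields a continuous section of the path space of $\map(E,B\TOP)$ joining $T\circ\alpha$ to $(+p^*TB)\circ T^v$, which is the desired homotopy. Alternatively, one can pass to the universal bundle $\tilde\calP^n$ over $\tilde\calB^n=\Bun_n(*;F)$ where the tangent bundle splitting is a single natural transformation of classifying maps, and restrict along a classifying lift representing a point of $\calS(p)$, using Corollary \ref{strspaces:structure_space_as_space_of_lifts}.

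The only genuine obstacle is the Whitney-sum splitting of the tangent microbundle of the total space of a topological fiber bundle of compact manifolds. In the smooth or PL setting this is routine; in the topological setting one needs enough microbundle theory to ensure that the short exact sequence $0\to\tau^v p'\to\tau E'\to(p')^*\tau B\to 0$ splits stably, which is classical (Kister–Mazur plus the existence of normal microbundles for embeddings of topological manifolds in Euclidean space). All remaining bookkeeping is formal manipulation with classifying maps into $B\TOP$ and fits the simplicial framework of Section \ref{strspaces}.
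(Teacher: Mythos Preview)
Your proposal is correct and follows the same overall architecture as the paper: argue on $0$-simplices via the Whitney-sum splitting $\tau(E')\cong \tau^v(p')\oplus (p')^*\tau(B)$, then observe that the same construction runs fiberwise over $\Delta^k$. The difference lies in how the splitting itself is produced. You invoke it as a classical fact and sketch a proof via a fiber-preserving embedding $E'\hookrightarrow B\times\RR^N$ together with stable normal microbundle theory. The paper instead gives a direct, elementary construction (Lemma \ref{fbs:decomposition_of_tangent_bundle}): it represents $TB$ by an open $U\subset B\times B$ with a contraction $\id_U\simeq s\circ q$, pulls back the bundle $E\times E\to E\times B$ along the resulting homotopy, and reads off an explicit microbundle over $E\times I$ interpolating between $TE$ and $T^vE\oplus p^*TB$. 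This avoids any appeal to embeddings or normal microbundles and makes the coherence over higher simplices transparent, since the construction is manifestly natural in the input data. Your embedding argument is fine stably, but note that normal microbundles for topological embeddings are genuinely delicate unstably; the paper's route sidesteps that issue entirely.
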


Combining diagrams \eqref{fbs:diag_geometric_and_algebraic_t} and \eqref{fbs:diagram_for_tangent_bundles}, we obtain 
\[\xymatrix{
{\calS_\infty}(p) \ar[rr]^\alpha \ar[d]^(0.4){(\tau, T^v)}_(.4)\simeq && {\calS_\infty}(E) \ar[d]^{(\tau,T)}_\simeq \\
{\sections{\Omega\Wh_B(E)}{B}}\times\map(E, B\TOP) \ar[rr]^(.55){\beta\times (+p^*TB)} && {\Omega}\Wh(E)\times\map(E, B\TOP)
}\]
which is the (rotated) diagram appearing in the statement of Theorem \ref{intro:main_theorem}. The vertical maps are weak homotopy equivalences by Hoehn's result, Theorem \ref{whtors:Hoehns_result}. By Theorem \ref{fbs:geometric_and_algebraic_t} and Proposition \ref{fbs:geometric_assembly_on_tangent_bundles}, the diagram is commutative up to homotopy, thus proving Theorem \ref{intro:main_theorem}.

\begin{rem}
The map $(+p^*TB)$ appearing in diagram \eqref{fbs:diagram_for_tangent_bundles} is a homotopy equivalence, since the bundle $TB$ has a stable inverse. It follows that diagram \eqref{fbs:diag_geometric_and_algebraic_t} is a weak homotopy pull-back with respect to any choice of homotopy. The long exact Mayer-Vietoris sequence decomposes into short exact sequences
\[0 \to \pi_*\calS_\infty(p) \xrightarrow{(\tau_*,\alpha_*)} \pi_{*+1}\sections{\Wh_B(E)}{B} \oplus \pi_* \calS_\infty(E) \xrightarrow{\beta_*-\tau_*} \pi_{*+1}\Wh(E)\to 0.\]
\end{rem}

To prove Proposition \ref{fbs:geometric_assembly_on_tangent_bundles}, let us start by giving the explicit construction of $T^v$. Recall the tangent (micro-)bundle \cite{Milnor(1964)} of a topological manifold $E$, which is represented by the diagram
\[E\xrightarrow{\Delta} E\times E\xrightarrow{\Proj_1} E,\]
$\Delta$ denoting the diagonal inclusion and $\Proj_1$ the projection onto the first factor. If $E$ is the total space of a fiber bundle of topological manifolds over $B$, then the vertical tangent bundle is the microbundle over $E$ defined by
\[E\xrightarrow{\Delta} E\times_B E\xrightarrow{\Proj_1} E.\]

By works of Kister \cite{Kister(1964)} and Mazur \cite{Mazur(1964)}, $n$-dimension microbundles form a bundle theory which agrees with the theory of pointed fiber bundles with fiber $\RR^n$. In particular the tangent bundle of any $n$-manifold is represented by a pointed $\RR^n$-bundle. However, there is no canonical choice. To define our map
\[T^v\colon \calS_n(p)\to\map(E,B\TOP(n))\]
let us first define a more suitable model for the classifying space.

Denote by $\mathbf{MB}_n(B)$ the category of all $n$-dimensional microbundles over $B$, considered as a subset of $B\times\calU$, where the morphisms are isomorphism-germs between those in the sense of Milnor \cite[\S 6]{Milnor(1964)}. Pull-back gives rise to a functor
\[\cpCW\op\to\cat,\quad X\mapsto \mathbf{MB}_n(B\times X).\]

\begin{lem}
This functor has the Amalgamation, Straightening, and Fill-in properties as defined in section \ref{strspaces}. 
\end{lem}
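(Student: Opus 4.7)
The plan is to prove the three properties in parallel with the arguments already given for $\mathbf{Bun}_n(B;F)$ and $\mathbf{Fib}(B;F)$ in section \ref{strspaces}, using the Kister--Mazur theorem as a bridge: every $n$-microbundle contains a pointed $\RR^n$-bundle, unique up to canonical isomorphism, so that isomorphism-germs between microbundles correspond, after shrinking, to honest isomorphisms of these $\RR^n$-bundle representatives. This reduces each property essentially to a corresponding property of $\RR^n$-bundles (or to a local statement), at the price of tracking the neighborhoods on which germ representatives are defined.

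For the Amalgamation property, I would follow the $\mathbf{Bun}_n$ argument almost verbatim. Because total spaces are required to sit inside $\_\times\calU$, two microbundles over $X_1$ and $X_2$ whose restrictions to $X_0$ coincide have total spaces which literally glue to a subset of $X\times\calU$; that the result is again an $n$-microbundle is a local check using the microbundle charts on $X_1$ and $X_2$. Similarly, a germ over $X_1$ and a germ over $X_2$ that agree over $X_0$ glue, after shrinking the representatives to a common neighborhood, to a unique germ over $X$. This yields the two conditions (a) and (b) of the pullback characterization explicitly.

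For the Straightening property, I would invoke the homotopy invariance of microbundles due to Milnor: over the contractible base $\Delta^k\times I$, any microbundle $E$ is germ-isomorphic to $p^*i^*E$ via an isomorphism-germ that restricts to the identity under $i^*$. Concretely, passing to a pointed $\RR^n$-bundle representative of $E$ and using that $\RR^n$-bundles over $\Delta^k\times I$ are trivial rel $\Delta^k\times 0$, one produces explicit morphism-germs $F\colon E\to p^*i^*E$ and $G\colon p^*i^*E\to E$ restricting to the identity over $\Delta^k\times 0$.

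For the Fill-in property, the plan is to adapt the mapping-cylinder construction from the bundle case (Proposition \ref{fibr:fill_in}): choose representatives of $\varphi_0$ and $\varphi_1$ on actual neighborhoods of the zero sections in $E_0$ and $E_1$, form their mapping cylinder as a subset of $\Delta^k\times I\times\calU$, and then pass to a smaller neighborhood of the obvious zero section to produce an $n$-microbundle $\bar E$ over $\Delta^k\times I$ together with a map $\Phi\colon\bar E\to p^*E$ specializing to $\varphi_0$ at $0$ and to $\varphi_1$ at $1$. The compatibility clause for a second diagram agreeing on a collection of faces is handled by making all neighborhood choices consistently on that collection of faces. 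The main obstacle is precisely this neighborhood bookkeeping: since germ representatives are only defined on some unspecified open neighborhood of the zero section, ensuring that mapping-cylinder constructions restrict to match genuine (not merely germ) equality over the prescribed faces requires a controlled choice of representatives and a careful shrinking argument, which is the microbundle analogue of the subtleties already encountered for fibrations in Proposition \ref{fibr:fill_in}.
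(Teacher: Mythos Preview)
Your approach is correct and broadly parallels the paper's: Amalgamation via the contained pointed $\RR^n$-bundle (Kister--Mazur), Straightening via Milnor's homotopy invariance, Fill-in via a mapping-cylinder-type construction. The substantive difference is in the Fill-in argument. The paper sidesteps the neighborhood bookkeeping you flag as ``the main obstacle'' by a direct construction: if $E^0\subset E$ is an open neighborhood of the zero section, then
\[
E\times[0,1)\;\cup\; E^0\times\{1\}\;\subset\; E\times[0,1]
\]
is already an $n$-microbundle over $B\times[0,1]$, giving a fill-in between $E$ and $E^0$ with no shrinking needed. Since an isomorphism-germ $E_0\to E$ is represented by an honest isomorphism from a sub-microbundle $U_0\subset E_0$ onto a sub-microbundle of $E$, the general fill-in decomposes into two of these ``telescope'' pieces plus a mapping cylinder of an actual isomorphism in the middle. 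Your plan of forming mapping cylinders of germ representatives and then passing to smaller neighborhoods can be made to work, but it forces you to manage exactly the compatibility issues you anticipate; the paper's construction makes these evaporate because the telescope literally restricts to $E$ and $E^0$ at the endpoints. (A minor point: the reference you give for the bundle-case mapping cylinder should be to the $\mathbf{Bun}_n$ discussion in section \ref{strspaces}, not to Proposition \ref{fibr:fill_in}, which is the fibration version.)
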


Hence, $\opn{MB}_n(E):=\vert N_0\mathbf{MB}_n(E)_\bullet\vert $ is a model for the space $\map(E,B\TOP(n))$. It has the advantage that a microbundle over $E$ canonically defines a point in $\opn{MB}_n(E)$.

\begin{proof}[of Lemma]
We begin with the Amalgamation property. Therefore let 
\[\xymatrix{
B_0 \arincl[rr] \arinclinv[d] && B_1 \arinclinv[d]\\
B_2 \arincl[rr] && B
}\]
be a push-out of compact CW complexes, and let $E_i$, $i=0,1,2$, be microbundles over $B_i$ whose restrictions to $B_0$ agree. Then the push-out of the $E_i$ defines a microbundle over $B$, since each microbundle contains a pointed euclidean bundle, and amalgamation holds for these.

The straightening property is proved in \cite[\S 6]{Milnor(1964)}. To obtain a fill-in, take mapping cylinders whenever we are given actual isomorphisms rather than just isomorphism-germs. In the general case observe that if $E^0\subset E$ is an open neighborhood of the zero-section in a microbundle over $B$, then the open subspace
\[E\times [0,1)\cup E^0\times \{1\}\subset E\times [0,1]\]
is a microbundle over $B\times [0,1]$ which is a fill-in between $E$ and $E^0$. 
\end{proof}

Now, to formally define the map $T^v\colon \calS_n(p)\to\opn{MB}_n(E)$, one first defines a larger structure space $\overline\calS_n(p)$ where a 0-simplex is a diagram
\[\xymatrix{
E' \ar@/^/[rr]^\varphi_{(H)} \ar[rd]_{p'} && E \ar@/^/[ll]^\psi \ar[ld]^p\\
& B
}\]
where $\psi$ is a fiber homotopy inverse of $\varphi$, $H\colon \varphi\circ\psi\simeq \id_E$ is a fiber homotopy,  and $p'$ is a bundle of compact topological manifolds. The forgetful map $\overline\calS_n(p)\to\calS_n(p)$ is a homotopy equivalence; on the other hand the explicit choice of homotopy inverse allows to define a map 
\[\overline\calS_n(p)_\bullet\to \opn{MB}_n(E)_\bullet,\quad (E',\varphi,\psi)\mapsto \psi^*T^v E'\]
where (on the level of 0-simplices) $\psi^* T^v E'$ is a microbundle over $E$, thus defining a 0-simplex in $\opn{MB}_n(E)_\bullet$. Stabilizing yields a map to $\hocolim_n \opn{MB}_n(E)\simeq \map(E,B\TOP)$.

\begin{rem}
Technically speaking, our definition of the tangent microbundle does not allow the manifolds to have boundaries. This problem can be dealt with by adding to all manifolds a collar at the boundary. Existence and uniqueness \cite[Essay I, App.~A]{Kirby-Siebenmann(1977)} of collars shows that the corresponding structure spaces are homotopy equivalent. On the other hand, the collar allows to extend the tangent bundle of the interior of $M$ over the boundary.
\end{rem}

The following lemma shows Proposition \ref{fbs:geometric_assembly_on_tangent_bundles} on the level of 0-simplices. The same technique applies to give a proof on higher simplices -- one just needs to make sure that all the bundles are taken fiberwise over the $\Delta^n$-direction. 

\begin{lem}\label{fbs:decomposition_of_tangent_bundle}
Let $p\colon E\to B$ be a bundle of compact topological manifolds over a compact topological manifold $B$. Let $q\colon U\to B$
be a pointed $\RR^n$-bundle representing the tangent microbundle (as an open subbundle of the projection $B\times B\to B$ onto the first coordinate), with zero-section $s$. Let $\hat U:=(p\times p)^{-1}(U)\subset E\times E$ and $\hat q\colon \hat U\to E$ be the first projection, representing the tangent microbundle of $E$.

Then any choice of fiber homotopy $\id_U\simeq s\circ q$ over $B$ defines a microbundle $X$ over $E\times I$ which restricts to $\hat U$ over $E\times 0$ and to $T_{\fib}E\oplus p^*U$ over $E\times 1$.

In particular, we have
\[TE\cong \hat U\cong T^v E\oplus p^*U\cong T^v E\oplus p^*TB.\]
\end{lem}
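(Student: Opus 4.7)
The plan is to define $X$ explicitly as a subspace of $E \times E \times B \times I$ using the given homotopy $H$. Writing $H_t(b_1, b_2) = (b_1, h_t(b_1, b_2))$ (the first coordinate is preserved since $H$ is over $B$), I would set
\[X := \bigl\{(e, e', b, t) \in E \times E \times B \times I \;:\; (p(e), b) \in U \text{ and } h_t(p(e), b) = p(e')\bigr\},\]
together with the projection $\pi \colon X \to E \times I$, $(e, e', b, t) \mapsto (e, t)$. Since $H_0 = \id_U$ and $H_1 = s \circ q$ both fix $s(B)$, we may arrange by a straightforward reparametrization that $H_t \circ s = s$ for every $t$; then the map $(e, t) \mapsto (e, e, p(e), t)$ is a section of $\pi$, which we declare to be the zero section of $X$.

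Next I would verify the endpoint identifications directly from the formula. Since $h_0 = \opn{pr}_2$, at $t = 0$ the defining relation forces $b = p(e')$ and imposes $(p(e), p(e')) \in U$, so $X|_{t=0} \cong \hat U$ via the bijection $(e, e', p(e'), 0) \leftrightarrow (e, e')$. Since $h_1 = \opn{pr}_1$, at $t = 1$ the relation forces $p(e) = p(e')$ while $(p(e), b) \in U$ remains free; this identifies $X|_{t=1}$ with $\{(e, e', b) \in (E \times_B E) \times B : (p(e), b) \in U\}$, the standard presentation of the Whitney sum $T^v E \oplus p^* U$ as the fibred product of total spaces over $E$.

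The core technical step is to show that $\pi$ is locally trivial with $\RR^{k+n}$-fibre around the zero section, where $k$ denotes the fiber dimension of $p$. Given $(e_0, t_0) \in E \times I$, I would pick a chart $B_0 \ni p(e_0)$ small enough to trivialize both $p$ (so $p^{-1}(B_0) \cong B_0 \times F_0$ with $F_0 \cong \RR^k$ a neighborhood of $e_0$ in the fiber) and the $\RR^n$-bundle $U$, via an identification $\phi \colon B_0 \times \RR^n \xrightarrow{\cong} U|_{B_0} \subset B \times B$ carrying the zero section to the diagonal. In these coordinates $H_t$ becomes $(b_1, c) \mapsto (b_1, \tilde h_t(b_1, c))$ with $\tilde h_0(b_1, c) = c$ and $\tilde h_1 \equiv 0$; the defining equation then determines $b_2 = p(e')$ uniquely and continuously in terms of $(b_1, c, t)$, so locally $X$ is homeomorphic to $B_0 \times F_0 \times F_0 \times \RR^n \times I$ (parametrized by $(b_1, f_1, f_2, c, t)$), under which $\pi$ is the map $(b_1, f_1, f_2, c, t) \mapsto (b_1, f_1, t)$ and the fibre $F_0 \times \RR^n \cong \RR^{k+n}$ contains the zero section $(f_1, 0)$ as required.

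The ``in particular'' statement then follows from the homotopy invariance of microbundles over paracompact bases: any microbundle over $E \times I$ provides an isomorphism between its restrictions over $E \times 0$ and $E \times 1$. Combined with the identifications $\hat U \cong TE$ (already noted in the statement) and $p^* U \cong p^* TB$, this yields the chain of isomorphisms in the lemma. I expect the main obstacle to be the local-triviality step: one has to compatibly trivialize $p$ and $U$ in a shared chart and then verify that the implicit equation for $b_2$ solves continuously; once this is set up, the endpoint computations and the concluding identifications are essentially formal.
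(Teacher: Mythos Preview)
Your construction is correct and in fact coincides with the paper's: your explicit subspace $X\subset E\times E\times B\times I$ is precisely the pullback of the fiber bundle $\id\times p\colon E\times E\to E\times B$ along the homotopy $H\colon V\times I\to E\times B$, where $V=p^*U\subset E\times B$ and $H(e,b,t)=(e,h_t(p(e),b))$. The only difference is presentational: the paper phrases it as a pullback, so that local triviality of $X\to V\times I$ (as an $F$-bundle) and of $V\times I\to E\times I$ (as an $\RR^n$-bundle) is automatic, and the composite is the desired microbundle over $E\times I$; this bypasses your explicit coordinate verification, which is correct but unnecessary once the pullback description is in hand.
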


\begin{proof}
Denote by $q'\colon V:= p^*U \to E$ the pointed $\RR^n$-bundle pulled back from $q$ using the bundle map $p$. The space $V$ is open in $E\times B$, and the fiber homotopy $\id_U\simeq s\circ q$ induces a fiber homotopy 
\[H\colon V\times [0,1]\to E\times B\]
between the inclusion from $V$ into $E\times B$ and the composite $V\xrightarrow{q'}E\overset{\Delta}{\hookrightarrow} E\times B$.

Now consider the bundle
\[\id\times p\colon E\times E\to E\times B.\]
Pulling it back using $H$, we obtain a bundle $X$ over $V\times [0,1]$ which restricts to  $E\times E\vert_V$ over $V\times 0$; over $V\times 1$, it restricts to $(q')^*(E\times_B E)$.

We may consider the bundle $X$ over $V\times [0,1]$ as a microbundle over $E\times [0,1]$. As such, its restriction over $E\times 1$ is precisely the direct sum $T^v E\oplus p^*U$, and its restriction over $E\times 0$ is the bundle $\hat q$.
\end{proof}


\section{Relation to \texorpdfstring{$h$}{h}-cobordisms}\label{section:h_cob}

Since, for a fiber bundle $p\colon E\to B$ of compact manifolds, the parametrized Whitehead torsion
\[\tau\colon \calS_\infty(p)\to\sections{\Omega\Wh_B(E)}{B}\]
is split surjective, every possible ``higher torsion'', i.e.~any homotopy class of sections in the target of $\tau$ is realized as the parametrized torsion of a fiber homotopy equivalence $p'\to p\times D^k$. The aim of this section is to show that -- under suitable hypotheses -- higher torsions may be realized by $h$-cobordisms and to give an estimate for the stable range.

The strategy of proof is to compare the higher torsion with Waldhausen's map, which connects the space of $h$-cobordisms on a manifold with higher algebraic $K$-theory. The stable parametrized $h$-cobordism theorem \cite{Waldhausen-Jahren-Rognes(2008)} asserts that this latter map is a homotopy equivalence. An explicit comparison between Waldhausen's map and the parametrized excisive characteristic $\chi^\%$ has been used by Hoehn \cite{Hoehn(2009)} to show that $\chi^\%$ is a homotopy equivalence. Our result is a consequence of this comparison and the Bousfield-Kan spectral sequence.

Let us first define the space of parametrized $h$-cobordisms on $p$. Let $\calH(p)_0$ be the set of commutative diagrams
\[\xymatrix{
E' \arincl[rr] \ar[rd]_q && E\times I \ar[ld]^{p\circ\Proj}\\
& B
}\]
where $q$ is a bundle of compact topological manifolds over $B$ (as a subset of $B\times\calU$) such that $E'$ contains a neighborhood of $E\times \{0\}$, and the inclusion of the fibers of $p$ into the fibers of $q$ is a homotopy equivalence. We will think of $E'$ as a fiberwise $h$-cobordism over $E$. The composite of the inclusion $E'\to E\times I$ with the projection onto $E$ defines a retraction from $E$ onto $E'$ which is a homotopy inverse of the inclusion.

\begin{defn}
The \emph{space of $h$-cobordisms on $p$} is the geometric realization $\calH(p)$ of the simplicial set with $k$-simplices $\calH(p)_k:=\calH(p\times\id_{\Delta^k})_0$.
\end{defn}

Denote by $\partial p\colon\partial^v E\to B$ the vertical boundary bundle, whose fiber over $b\in B$ is given by $\partial p^{-1}(b)$. If $E'$ is a fiberwise $h$-cobordism over $\partial^v E$, we can construct the bundle $E'\cup_{\partial^v E} E$ over $B$. The retraction $E'\to \partial^v E$ induces a fiber homotopy equivalence $E'\cup_{\partial^v E} E\to E$. Thus we get a zero-simplex in $\calS_k(p)$, where $k$ is the fiberwise dimension of $p$. Working with families, this rule defines a map
\[\phi\colon \calH(\partial p)\to\calS_k(p)\]
which we may compose with $\tau$ to obtain
\[\calH(\partial p)\xrightarrow{\phi}\calS_k(p)\xrightarrow{\tau}\sections{\Omega\Wh_B(E)}{B}\]

\begin{thm}\label{h_cob:main_result}
Denote by $(F,\partial F)$ the fiber of $p$. If $K$ is in the concordance stable range of $\partial F$, and $(F,\partial F)$ is $l$-connected, then the composite $\tau\circ \phi$ is $(\min(K+1, l-1)-n)$-connected, where $n=\dim B$.
\end{thm}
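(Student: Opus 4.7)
The plan is to factor the composite $\tau\circ\phi$ as
\[
\calH(\partial p)\xrightarrow{\ w\ }\sections{\Omega\Wh_B(\partial^v E)}{B}\xrightarrow{\ \iota_*\ }\sections{\Omega\Wh_B(E)}{B},
\]
where $w$ is the ``parametrized Waldhausen map'' from $h$-cobordisms to the $\Wh$-section space, and $\iota_*$ is induced by the inclusion $\iota\colon \partial^v E\hookrightarrow E$. Each factor is then analyzed separately for connectivity. The identification $\tau\circ\phi\simeq \iota_*\circ w$ is the parametrized sum formula for Whitehead torsion: classically, the torsion of a homotopy equivalence $W\cup_{\partial F} F\to F$ arising from an $h$-cobordism $W$ on $\partial F$ is just the image of $\tau(W)\in\Wh(\partial F)$ under $\iota_*\colon \Wh(\partial F)\to\Wh(F)$; in the parametrized setting, this identification is built into Hoehn's comparison between the parametrized excisive characteristic $\chi^\%$ and Waldhausen's map, and should follow by applying that comparison fiberwise and assembling.

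For the first factor $w$: on each fiber of $B$, $w$ specializes to the unparametrized Waldhausen map $\calH(\partial F)\to\Omega\Wh(\partial F)$, which is (roughly) $K$-connected whenever $K$ is in the concordance stable range of $\partial F$, by the stable parametrized $h$-cobordism theorem of Waldhausen--Jahren--Rognes. Both source and target are homotopy limits over $\simp B_\bullet$, and the Bousfield--Kan spectral sequence for such $\holim$s (whose $E_2$-term is cohomology of $B$ with coefficients in the fiberwise homotopy of the map) converts fiberwise $K$-connectedness into a $(K+1-n)$-connected map on $\holim$s, since $\dim B=n$.

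For the second factor $\iota_*$: the $l$-connectedness of $(F,\partial F)$ makes $\iota\colon \partial F\hookrightarrow F$ an $l$-connected map. Since $A(-)$ and $A^\%(-)$ preserve such connectivities and the assembly map commutes with $\iota_*$, the induced map $\Omega\Wh(\partial F)\to\Omega\Wh(F)$ is at least $(l-1)$-connected. A second application of the Bousfield--Kan argument then gives that $\iota_*$ is $(l-1-n)$-connected on section spaces. Combining the two estimates yields $\min(K+1,l-1)-n$ connectedness for $\tau\circ\phi$, proving the theorem.

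The main obstacle is the first step, namely rigorously establishing $\tau\circ\phi\simeq \iota_*\circ w$. While the classical sum formula is routine, the parametrized version requires matching two different $K$-theoretic incarnations of the torsion — the one defined here via the parametrized excisive characteristic $\chi^\%$ and its loop-space difference with $\chi^\%(p)$, and the one given by Waldhausen's direct construction on $h$-cobordism spaces — and must keep careful track of basepoints (here, of the structure used to define $\tau$ as a difference). This is the crux content of Hoehn's comparison and the only non-formal step in the argument; the connectivity estimates afterwards are essentially bookkeeping with the Bousfield--Kan spectral sequence.
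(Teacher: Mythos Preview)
Your proposal is correct and follows essentially the same route as the paper. The paper likewise invokes Hoehn's comparison to identify $\tau\circ\phi$ with $i_*\circ W$ (Waldhausen's map followed by the inclusion-induced map), obtains the fiberwise connectivity estimates $(K+1)$ and $(l-1)$ from the stable parametrized $h$-cobordism theorem and the fact that $K$-theory preserves connectivity, and then passes from fibers to the base via the Bousfield--Kan spectral sequence, losing $n=\dim B$.

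The only organizational difference is that the paper first treats the case $B=*$ explicitly and then, rather than constructing a parametrized Waldhausen map $w$ as you do, observes that both $\calH(\partial p)$ and the section space are co-excisive in $B$, so the co-assembly map identifies each with a $\holim$ over $\simp B_\bullet$ of the unparametrized objects; Bousfield--Kan is then applied once to the composite. Your approach of applying Bousfield--Kan to each factor separately gives the same bound, and your identification of Hoehn's comparison as the only non-formal input is exactly right.
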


\begin{cor}
\begin{enumerate}
\item If both $K+1$ and $l-1$ are greater or equal to $\dim B$, then every homotopy class of sections of $\Omega\Wh_B(E)\to B$ is realized as the parametrized torsion of a fiber bundle structure on $p$ obtained by glueing a fiberwise $h$-cobordism along the vertical boundary bundle $\partial p$.
\item If both $K+1$ and $l-1$ are strictly greater than $\dim B$, then there is a one-to-one correspondence between isomorphism classes of fiberwise $h$-cobordisms on the vertical boundary bundle $\partial p$ and homotopy classes of sections of $\Omega\Wh_B(E)\to B$.
\end{enumerate}
\end{cor}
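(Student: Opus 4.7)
The plan is to deduce both parts directly from Theorem \ref{h_cob:main_result} by extracting the appropriate $\pi_0$-level information from the stated connectivity of the composite
\[\tau\circ\phi\colon \calH(\partial p)\to\sections{\Omega\Wh_B(E)}{B}.\]
I use the convention that a map is $m$-connected if it induces isomorphisms on $\pi_i$ for $i<m$ and a surjection on $\pi_m$; with this convention 0-connectedness is precisely surjectivity on $\pi_0$ and 1-connectedness is precisely bijectivity on $\pi_0$ (together with $\pi_1$-surjectivity, which we will not need).

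For part (i), the hypothesis that $K+1\geq n$ and $l-1\geq n$ gives $\min(K+1,l-1)-n\geq 0$, so Theorem \ref{h_cob:main_result} produces a surjection on $\pi_0$. To translate this into the claim, first observe that $\pi_0\sections{\Omega\Wh_B(E)}{B}$ is by definition the set of homotopy classes of sections (this is where the implicit fibrant replacement of $\Omega\Wh_B(E)\to B$ matters: it turns continuous sections up to homotopy into path components of the section space). Next, inspect the construction of $\phi$: a $0$-simplex of $\calH(\partial p)$ is a fiberwise $h$-cobordism $E'\to B$ on $\partial^v E$, and $\phi$ sends it to the structure on $p$ represented by the fiber bundle $E'\cup_{\partial^v E}E\to B$ together with the canonical fiber homotopy equivalence to $E$. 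Thus every element in the image of $\pi_0(\tau\circ\phi)$ is, by definition, the parametrized Whitehead torsion of a structure on $p$ obtained by gluing an $h$-cobordism along $\partial p$, and surjectivity of $\pi_0(\tau\circ\phi)$ gives the assertion.

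For part (ii), the stronger hypothesis that $K+1>n$ and $l-1>n$ gives $\min(K+1,l-1)-n\geq 1$, so Theorem \ref{h_cob:main_result} now yields that $\pi_0(\tau\circ\phi)$ is a \emph{bijection}. The remaining point is to identify $\pi_0\calH(\partial p)$ with the set of isomorphism classes of fiberwise $h$-cobordisms on $\partial p$; this is the standard identification of path components of a space built from a simplicial set of geometric objects with isomorphism (or concordance) classes of those objects: two fiberwise $h$-cobordisms lie in the same component exactly when they are joined by a $1$-simplex, i.e.\ a bundle of $h$-cobordisms over $B\times\Delta^1$, and by the bundle straightening / isotopy extension that is built into the ``fiber bundle over $B\times\Delta^k$'' structure of $\calH(\partial p)_\bullet$ this is equivalent to being isomorphic as fiberwise $h$-cobordisms on $\partial p$. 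Combining the two identifications with the bijectivity of $\pi_0(\tau\circ\phi)$ gives the claimed one-to-one correspondence.

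The only nonroutine point is the $\pi_0$-identification of $\calH(\partial p)$ with isomorphism classes, and this is purely formal once one recalls the simplicial-set definition of the parametrized $h$-cobordism space used in the section; otherwise the corollary is a direct reading of Theorem \ref{h_cob:main_result} in the ranges $m=0$ and $m=1$.
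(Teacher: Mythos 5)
Your proposal is correct and is essentially the paper's (implicit) argument: the corollary is just the $\pi_0$-level reading of the connectivity statement in Theorem \ref{h_cob:main_result}, with $\min(K+1,l-1)-\dim B\geq 0$ giving surjectivity and $\geq 1$ giving bijectivity on path components. The only substantive identifications you invoke -- $\pi_0$ of the section space as homotopy classes of sections, and $\pi_0\calH(\partial p)$ as isomorphism classes of fiberwise $h$-cobordisms via straightening bundles over $B\times\Delta^1$ -- are exactly the standard facts the paper takes for granted in stating the corollary without proof.
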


To prove Theorem \ref{h_cob:main_result}, denote by
\[W\colon \calH(N)\to\Omega\Wh(N)\]
Waldhausen's map, for a compact manifold $N$. Hoehn \cite[chapter 3]{Hoehn(2009)} showed

\begin{thm}
The following diagram commutes up to homotopy:
\[\xymatrix{
{\calH(\partial M)} \ar[rr]^\phi \ar[d]^W && {\calS_k(M)} \ar[d]^{\chi^\%} \\
{\Omega\Wh(\partial M)} \ar[rr]^(.4){i_* + \chi^\%(\id_M)} && {\hofib_{\chi(M)} (A^\%(M)\to A(M))}
}\]
\end{thm}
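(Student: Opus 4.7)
The plan is to apply the Additivity theorem (Theorem~\ref{strspaces:additivity}) to a natural codimension-one splitting of the structure $\phi(E')$ along $\partial M$, reducing the computation of $\tau\circ\phi$ to the parametrized torsion of the fiberwise homotopy equivalence $E'\to\partial M\times I$, which by a direct comparison of definitions agrees with Waldhausen's map $W$.

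First I would set up the relevant square. Using a collar of $\partial M$ in $M$ to identify $\partial M\times I$ with a codimension-zero submanifold along $\partial M$, consider the square $\square$ of trivial fibrations over a point:
\[\xymatrix{
\partial M \ar[r] \ar[d] & \partial M\times I \ar[d]\\
M \ar[r] & M
}\]
The structure $\phi(E')=E'\cup_{\partial M}M\to M$ has an obvious codimension-one splitting along $\partial M$ into the pieces $q_1=E'$ and $q_2=M$ with intersection $q_0=\partial M$; the corresponding fiberwise homotopy equivalences down to the $p$-square are the retraction-induced map $E'\to\partial M\times I$ and the identities on $M$ and on $\partial M$. This defines a natural lift $\hat\phi\colon\calH(\partial M)\to\calS_k(\square)$ with $\phi=\beta_3\circ\hat\phi$.

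Next I would apply Additivity to obtain
\[\tau\circ\phi \simeq j_{1*}\circ\tau\circ\beta_1\circ\hat\phi + j_{2*}\circ\tau\circ\beta_2\circ\hat\phi - j_{0*}\circ\tau\circ\beta_0\circ\hat\phi.\]
The last two terms vanish by homeomorphism invariance, since $\beta_2\circ\hat\phi$ and $\beta_0\circ\hat\phi$ land in the tautological identity structures on $M$ and $\partial M$. Under the collar identification $\partial M\times I\simeq\partial M$, the map $j_{1*}$ becomes the inclusion-induced $i_*\colon\Omega\Wh(\partial M)\to\Omega\Wh(M)$. Hence
\[\tau\circ\phi\simeq i_*\circ(\tau\circ\beta_1\circ\hat\phi).\]
It then remains to check that $\tau\circ\beta_1\circ\hat\phi\simeq W\colon\calH(\partial M)\to\Omega\Wh(\partial M)$; adding back $\chi^\%(\id_M)$ via the loop space structure on the homotopy fiber converts the left-hand side into $\chi^\%\circ\phi$ and yields the stated identity.

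The main obstacle is the final identification $\tau\circ\beta_1\circ\hat\phi\simeq W$. Conceptually both invariants measure the ``Whitehead torsion of the $h$-cobordism $E'$'' over $\partial M$, but they are built quite differently: $\tau$ via the parametrized $A$-theory characteristic and the fiberwise assembly map, while $W$ arises directly from Waldhausen's $K$-theoretic construction on $\calH$. Carrying out this comparison at the space level, and doing so naturally enough to be compatible with the additivity argument above, is the geometric heart of the statement; it amounts to unwinding the two definitions and matching them through the natural transformation on Waldhausen categories of retractive spaces given by interpreting an $h$-cobordism as a retractive space over its incoming boundary. This is precisely what is carried out in \cite[Chapter~3]{Hoehn(2009)}.
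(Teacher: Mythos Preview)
The paper does not give its own proof of this statement; it is quoted verbatim as a result of Hoehn \cite[Chapter~3]{Hoehn(2009)}. So there is no in-paper argument to compare your proposal against.

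That said, your reduction via Additivity is formally correct but does not amount to an independent proof. After cancelling the two identity pieces you are left with the identification $\tau\circ\beta_1\circ\hat\phi\simeq W$, i.e.\ the comparison of the parametrized torsion of the structure $E'\hookrightarrow\partial M\times I$ with Waldhausen's map. This is essentially the same theorem again, now for the collar $\partial M\times I$ in place of $M$: the entire content of Hoehn's result is precisely the space-level matching of the Dwyer--Weiss--Williams excisive characteristic with Waldhausen's construction on $\calH$, and your Additivity step does not touch that comparison. You acknowledge this and defer to Hoehn for it, which is honest, but it means your argument is a repackaging rather than a proof.

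Note also that the paper runs the implication in the opposite direction: it takes Hoehn's diagram for $\chi^\%$ as input and then subtracts $\chi^\%(\id_M)$ to obtain the commutative square with $\tau$ and $i_*$ (the paragraph immediately following the theorem). Your proposal attempts the reverse passage, but since both directions rest on the same core identification of $\chi^\%$ (equivalently $\tau$) with $W$ on the $h$-cobordism space, neither direction is easier than the other.
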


The definition of the parametrized torsion implies that the diagram
\[\xymatrix{
{\calH(\partial M)} \ar[rr]^\phi \ar[d]^W && {\calS_k(M)} \ar[d]^{\tau} \\
{\Omega\Wh(\partial M)} \ar[rr]^{i_*} && {\Omega\Wh(M)}
}\]
commutes up to homotopy as well. 

\begin{proof}[of Theorem \ref{h_cob:main_result}]
Waldhausen's map factors through the infinite stabilization map
\[\calH(\partial M) \to\calH_\infty(\partial M)\to \Omega\Wh(\partial M)\]
where the first map is $(K+1)$-connected by the definition of the concordance stable range, and the last map is a homotopy equivalence by the stable parametrized $h$-cobordism theorem. So $W$ is $(K+1)$-connected.

On the other hand, the inclusion $\partial M\to M$ is $l$-connected by assumption. As $K$-theory preserves connectivity, it follows that the inclusion-induced map $i_*\colon \Omega\Wh(\partial M)\to\Omega\Wh(M)$ is $(l-1)$-connected. Hence the composite $i_*\circ W\simeq \tau\circ\phi$ is $\min(K+1, l-1)$-connected. This proves the result in the case where $B$ is a point. More generally, the result holds for contractible base spaces $B$ as $\tau\circ\phi$ is natural in $B$ and both domain and target are homotopy invariant in $B$.

In the case of a general $B$, use the fact that both the left-hand and the right-hand side of $\tau\circ\phi$ are ``co-excisive'' in $B$, i.e.~given a homotopy push-out $B_3=B_1\cup_{B_0} B_2$ of spaces over $B$, the diagram
\[\xymatrix{
{\calH(\partial p\vert_{B_3})} \ar[rr] \ar[d] &&{\calH(\partial p\vert_{B_2})} \ar[d]\\
{\calH(\partial p\vert_{B_1})} \ar[rr] && {\calH(\partial p\vert_{B_0})}
}\]
is a homotopy pull-back. It follows (see e.g.~\cite[p.~62]{Dwyer-Weiss-Williams(2003)}) that the co-assembly map
\[\calH(p)\to\holim_{\sigma\in\simp B_\bullet}\calH(p\vert_\sigma)\]
is a homotopy equivalence. Thus it is enough to prove the statement for
\[
\holim_{\sigma\in\simp B_\bullet}\calH(p\vert_{\sigma})
\to
\holim_{\sigma\in\simp B_\bullet} \sections{\Omega\Wh_{\vert\sigma\vert}(E_\sigma)}{\vert\sigma\vert}\]
Now it follows from the Bousfield-Kan spectral sequence that this map is $(N-\dim B)$-connected whenever all the maps for the individual $\sigma$ are $N$-connected.
\end{proof}


\appendix

\section{Some results on fibrations}

This appendix collects the technical results on fibrations needed to make our classifying machinery work. Again recall that all the spaces are compactly generated
Hausdorff.

\subsection*{The fibered homotopy extension property}

We begin by stating a useful fact.

\begin{lem}[(Fibered homotopy extension property)]
A map $i\colon A\to X$ is a cofibration if and only if the
fibered homotopy extension property holds:

Given any (solid) commutative diagram 
\[\xymatrix{
A\times I\cup_{A\times 0} X\times 0 \ar[rr]^(.6){h\cup f}
\arinclinv[d] && E
\ar[d]^p\\
X\times I \ar[r]^{\Proj} \ar@{.>}[rru]^H & X\ar[r]^{pf} & B
}\]
with $p$ a fibration, there exists a (dotted) lift.
\end{lem}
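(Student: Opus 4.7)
The plan is to handle the two implications separately. The easy $(\Leftarrow)$ direction reduces to the ordinary (unfibered) HEP: I would take $B$ to be a one-point space, $E$ an arbitrary space $Z$, and $p\colon Z\to \ast$ the trivial projection, which is automatically a Hurewicz fibration. The FHEP diagram then degenerates into the homotopy extension problem for $i$ with respect to $Z$, and since $Z$ is arbitrary, $i$ is a cofibration.

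For the nontrivial $(\Rightarrow)$ direction, suppose $i$ is a cofibration; in the category $\CGHaus$ this means $i$ is closed, so that the NDR-pair apparatus applies to $(X, A)$. First I would observe that the pushout-product $j\colon A\times I\cup_{A\times 0} X\times 0\hookrightarrow X\times I$ is itself a closed cofibration, and moreover admits a \emph{strong deformation retraction} $\rho\colon X\times I\times I\to X\times I$ satisfying $\rho_0=\id$, $\im(\rho_1)\subset A\times I\cup X\times 0$, and $\rho_t|_{A\times I\cup X\times 0}=\id$ for every $t\in I$; this is the standard NDR construction. Secondly, since $p$ is a Hurewicz fibration, it possesses a \emph{regular} path-lifting function $\lambda\colon E\times_B B^I\to E^I$, meaning constant paths are lifted to constant paths, which is a classical result.

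Given these two ingredients, I would construct the desired lift $H$ explicitly. Writing $g:=h\cup f$ and $\phi:=pf\circ\Proj$, define, for each $(x,s)\in X\times I$, the path $\alpha_{x,s}\colon I\to X\times I$ by $\alpha_{x,s}(t):=\rho(x, s, 1-t)$, running from $\rho_1(x,s)\in A\times I\cup X\times 0$ to $(x,s)$. Then put
\[
H(x,s) \;:=\; \lambda\bigl(g\rho_1(x, s),\, \phi\circ\alpha_{x,s}\bigr)(1).
\]
The identity $pH(x,s) = (\phi\circ\alpha_{x,s})(1) = \phi(x,s)$ is immediate from the definition of a path-lifting function, so $H$ covers $\phi$. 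To see that $H$ extends $g$ along $j$, note that on $A\times I\cup X\times 0$ the map $\rho$ is the identity in every $t$; hence $\alpha_{x,s}$ is the constant path at $(x,s)$, the path $\phi\circ\alpha_{x,s}$ is constant in $B$, and by regularity of $\lambda$ its lift is constant in $E$ with endpoint equal to $g(x,s)$.

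The main obstacle is Step 1, namely upgrading the mere existence of a retraction $X\times I\to A\times I\cup X\times 0$ (equivalent to $i$ being a cofibration) to the strong deformation retraction $\rho$ that is \emph{identically} the identity on the subspace for \emph{every} parameter value. This rigidity is exactly what underwrites the final check $H|_{A\times I\cup X\times 0} = g$, and it rests on the NDR-pair (equivalently Str\o m) characterization of closed cofibrations, which is the nontrivial classical input the argument leans on.
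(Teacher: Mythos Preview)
Your $(\Leftarrow)$ direction is exactly the paper's: specialize to $B=\ast$. For $(\Rightarrow)$ the paper offers no argument beyond ``standard result,'' so your explicit construction goes further than the text does.

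There is, however, a gap. You assert that any Hurewicz fibration admits a \emph{regular} path-lifting function as ``a classical result.'' This is not true without hypotheses on $B$; regularity typically requires something like metrizability --- note that the paper's own later regularity Lemma (attributed to Hurewicz) explicitly assumes the base is metrizable. Your final verification $H|_{A\times I\cup X\times 0}=g$ rests entirely on this: on that subspace $\alpha_{x,s}$ is the constant path, hence $\phi\circ\alpha_{x,s}$ is constant in $B$, and you need its $\lambda$-lift to be constant to recover $g(x,s)$. Without regularity the lift of a constant path need not be constant, and the argument fails exactly here.

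The standard repair (due to Str{\o}m, and presumably what the paper means by ``standard'') avoids regularity by using more of the NDR data. The pair $(X\times I,\,A\times I\cup X\times 0)$ is not merely an SDR but a closed DR-pair: in addition to your $\rho$ there is a function $v\colon X\times I\to I$ with $A\times I\cup X\times 0\subset v^{-1}(0)$ and $\rho_t(y)\in A\times I\cup X\times 0$ whenever $t>v(y)$. One uses $v$ to reparametrize the lifting so that on the subspace no nontrivial lifting is performed, rather than relying on a constant path lifting to a constant path. So your diagnosis that Step~1 is the main obstacle is slightly off: the strong deformation retraction is indeed available as you argue, but it is the unjustified regularity in Step~2 that actually breaks the proof as written.
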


In fact, the usual HEP for cofibrations amounts to letting $B$
be a point. On the other hand, if $i$ is a cofibration, the
existence of a lift is a standard result.

As a consequence, some well-known results from the theory of cofibrations also hold in the fibered context. Proofs are identical, replacing the homotopy extension property by the fibered one.

\begin{prop}[({\cite[Chapter 6]{May(1999)}})]\label{fibr:homotopy_equivalences_under_A}
Let 
\[\xymatrix{
 & Y \arinclinv[ld] \arinclinv[rd]\\
X_1 \ar[rr]^f && X_2
}\]
be a diagram of spaces over $B$ (with fiberwise maps), such that the diagonal maps are cofibrations and both $X_1$ and $X_2$ are fibrations over $B$. If $f$ is a fiber homotopy equivalence, then it is a fiber homotopy equivalence relative to $Y$ (i.e.~there is a fiber homotopy inverse and fiber homotopies that fix $Y$).
\end{prop}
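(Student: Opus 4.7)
The plan is to mirror the classical (non-fibered) proof of this statement, as given in \cite[Chapter~6]{May(1999)}, with the single modification that every invocation of the homotopy extension property is replaced by the fibered HEP (which is available by the Lemma stated immediately before this Proposition). The argument splits naturally into two steps: first, \emph{rigidifying} a fiber homotopy inverse so that it is strictly compatible with $Y$; second, \emph{trivializing} on $Y$ the self-homotopies exhibiting the fiber homotopy equivalence.

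For the first step, I start with any fiber homotopy inverse $g\colon X_2\to X_1$ together with fiber homotopies $H\colon gf\simeq \id_{X_1}$ and $K\colon fg\simeq \id_{X_2}$, which exist by hypothesis. Let $i_j\colon Y\to X_j$ denote the given inclusions, so that $f i_1 = i_2$ by commutativity of the diagram. Restriction of $H$ to $Y\times I$ gives a fiber homotopy from $g i_2 = g f i_1$ to $i_1$. Applying the fibered HEP to the cofibration $i_2\colon Y\to X_2$ and the fibration $X_1\to B$, I extend this restricted homotopy to a fiber homotopy $g\simeq g'$ of maps $X_2\to X_1$, arriving at a fiber homotopy inverse $g'$ satisfying $g' i_2 = i_1$ strictly. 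Replacing $g$ by $g'$ and adjusting $H,K$ by concatenation with the induced fiber homotopies $gf\simeq g'f$ and $fg\simeq fg'$, I may assume from the outset that $g i_2 = i_1$. After this first step, the restrictions $H|_{Y\times I}$ and $K|_{Y\times I}$ become fiber self-loops at $i_1$ and $i_2$ respectively.

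The second step is the delicate one: I must modify $H$ and $K$ further so as to be constant on $Y$. The standard trick is to let $\alpha = H|_{Y\times I}$, extend the reverse loop $\bar\alpha$ via the fibered HEP for the cofibration $i_1\colon Y\to X_1$ to a fiber self-homotopy $\tilde{\bar\alpha}\colon X_1\times I\to X_1$ of $\id_{X_1}$, and concatenate to form a new fiber homotopy $H'=H*\tilde{\bar\alpha}$ from $gf$ to some map $r$ with $r i_1 = i_1$; the restriction $H'|_{Y\times I}$ is $\alpha * \bar\alpha$, which is nullhomotopic rel endpoints through fiber loops. Applying the fibered HEP once more, this null-homotopy on $Y$ extends to a deformation (rel endpoints) of $H'$ itself, producing a fiber homotopy from $gf$ to a map $r'$ whose restriction to $Y\times I$ is the constant loop at $i_1$. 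An analogous argument trivializes $K|_{Y\times I}$, yielding $r'\colon X_1\to X_1$ and $s'\colon X_2\to X_2$ which agree with the identity on $Y$ and which are each fiber-homotopic, relative to $Y$, to $gf$ and $fg$ respectively. Since $r'$ and $s'$ are themselves fiber homotopy equivalences relative to $Y$ (the homotopies above exhibit them as such up to composing with $f$ and $g$), a final application of the fibered HEP allows to upgrade the homotopies $gf\simeq r'\simeq \id_{X_1}\,\mathrm{rel}\,Y$ and $fg\simeq s'\simeq \id_{X_2}\,\mathrm{rel}\,Y$ to fiber homotopies rel $Y$ directly, completing the proof.

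The main obstacle is the bookkeeping in the second step: one must verify that each application of the fibered HEP respects the projection to $B$, and that the successive concatenations and extensions never destroy the properties (strict commutativity under $Y$, strict commutativity over $B$) secured earlier. Conceptually this is identical to the unfibered argument, and no new idea beyond the fibered HEP is required.
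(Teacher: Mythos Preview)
Your overall strategy matches the paper exactly: the paper gives no proof, simply noting that May's classical argument goes through once the ordinary HEP is replaced by the fibered one. Step 1 is correct. But step 2 has a genuine gap: after introducing the auxiliary map $r$ and deforming $H' = H * \tilde{\bar\alpha}$ rel endpoints, you obtain only $gf \simeq r$ rel $Y$. You still need $r \simeq \id_{X_1}$ rel $Y$, and your ``final application of the fibered HEP'' does not provide this---the only homotopy $r \simeq \id_{X_1}$ available is $\tilde{\bar\alpha}^{-1}$, whose restriction to $Y$ is the non-constant loop $\alpha$. The argument is circular at this point: you are left with exactly the original problem, now for $r$ in place of $gf$.

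The detour through $r$ is in fact unnecessary. Your own step 1 already arranges that the adjusted $H|_Y$ equals $\bar\alpha * \alpha$ (trace through the concatenation you describe), which is null-homotopic rel endpoints from the outset. So apply the fibered HEP for the cofibration $Y \times I \cup X_1 \times \partial I \hookrightarrow X_1 \times I$ to deform $H$ directly, rel both endpoints, to a homotopy constant on $Y$; this gives $gf \simeq \id_{X_1}$ rel $Y$ without ever introducing $r$. Note that this yields only a \emph{left} inverse rel $Y$; the same trick does not apply symmetrically to $K$, since the adjusted $K|_Y$ has the form $\overline{f\alpha} * \beta$ with $\beta$ coming from the original $K$, and there is no reason for this loop to be null-homotopic. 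One finishes formally instead: the left inverse $g$, being itself a fiber homotopy equivalence under $Y$, has by the argument just given its own left inverse $g'$ rel $Y$, and then $f \simeq_Y g'gf \simeq_Y g'$ shows $fg \simeq_Y g'g \simeq_Y \id_{X_2}$, so $g$ is the required two-sided inverse.
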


Notice that any homotopy equivalence between two fibrations is a fiber homotopy equivalence \cite[chapter 7]{May(1999)}.

\begin{cor} \label{fibr:cofibration_and_h_eq_is_retract}
Suppose that $p\colon E\to B$ is a fibration, and that $i\colon
E'\hookrightarrow E$ is a cofibration. The following are equivalent:
\begin{enumerate}
\item $p\vert_{E'}\colon E'\to B$ is a fibration and $i$ is a homotopy equivalence,
\item $i$ is a fiberwise (strong) deformation retract.
\end{enumerate}
\end{cor}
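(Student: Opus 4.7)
The plan is to prove the two implications separately, leaning heavily on the preceding Proposition \ref{fibr:homotopy_equivalences_under_A} for the harder direction.

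First I would handle (ii) $\Rightarrow$ (i) by direct construction. Suppose $i$ is a fiberwise strong deformation retract, with retraction $r\colon E\to E'$ over $B$ (so $p|_{E'}\circ r = p$ and $r\circ i = \id_{E'}$) and a fiber homotopy $H\colon E\times I\to E$ from $\id_E$ to $i\circ r$ that is constant on $E'$. Then $r$ is a homotopy inverse of $i$, so $i$ is a homotopy equivalence. To see that $p|_{E'}$ is a fibration, consider homotopy lifting data $(f\colon X\to E', h\colon X\times I\to B)$ with $h_0 = p|_{E'}\circ f$. View $f$ as a map to $E$, use that $p$ is a fibration to obtain a lift $\tilde h\colon X\times I\to E$, and compose with $r$; since $r|_{E'} = \id_{E'}$ the initial condition is preserved, and $p|_{E'}\circ r\circ \tilde h = p\circ\tilde h = h$.

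For (i) $\Rightarrow$ (ii), which is the substantive direction, I would apply Proposition \ref{fibr:homotopy_equivalences_under_A} to the diagram
\[\xymatrix{
 & E' \arinclinv[ld]_{\id} \arinclinv[rd]^{i}\\
E' \ar[rr]^i && E
}\]
of spaces over $B$. The hypothesis that $i\colon E'\hookrightarrow E$ is a cofibration is exactly the condition on the right-hand diagonal; the left-hand diagonal is the identity, trivially a cofibration. By assumption both $E'$ and $E$ are fibrations over $B$, and $i$ is a homotopy equivalence, hence (by the standard result cited from \cite[chapter 7]{May(1999)}) a fiber homotopy equivalence. Proposition \ref{fibr:homotopy_equivalences_under_A} therefore upgrades $i$ to a fiber homotopy equivalence relative to $E'$: there is a fiberwise retraction $r\colon E\to E'$ with $r\circ i = \id_{E'}$ and a fiber homotopy from $i\circ r$ to $\id_E$ that is stationary on $E'$. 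This is precisely the datum of a fiberwise strong deformation retract.

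The main obstacle is really packaged inside Proposition \ref{fibr:homotopy_equivalences_under_A}, which supplies the rel-$Y$ upgrade of a fiber homotopy equivalence; the rest is a matter of recognizing that the corollary is the correct specialization of that proposition to $Y = E'$, $X_1 = E'$, $X_2 = E$. No separate calculation is needed beyond the bookkeeping to check the initial conditions in the (ii) $\Rightarrow$ (i) direction.
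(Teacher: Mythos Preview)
Your proof is correct and is exactly the argument the paper has in mind: the corollary is stated without proof immediately after Proposition~\ref{fibr:homotopy_equivalences_under_A}, and your specialization $Y=E'$, $X_1=E'$, $X_2=E$ is precisely how (i) $\Rightarrow$ (ii) is meant to follow, while (ii) $\Rightarrow$ (i) is the elementary retract-of-a-fibration argument you give.
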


\begin{prop}[({\cite[Chapter 6]{May(1999)}})]\label{fibr:fiber_homotopy_equivalences_of_pairs}
Let
\[\xymatrix{
Y_1 \ar[rr]^f_\simeq \arinclinv[d] && Y_2\arinclinv[d]\\
X_1 \ar[rr]^g_\simeq && X_2
}\]
be a diagram of spaces over $B$, with vertical maps cofibrations and $X_1$ and $X_2$ fibrations over $B$. If $f$ has a fiber homotopy inverse $f'$ over $B$ and $g$ is a homotopy equivalence, then $(g,f)$ is a fiber homotopy equivalence of pairs.

More precisely, if $H\colon Y_2\times I\to Y_2$ is a homotopy between $\id$ and $f'\circ f$, then there exists a fiber homotopy inverse $g'$ of $g$ that extends $f'$ such that $H$ extends to a homotopy $K\colon X_2\times I\to X_2$ between $\id$ and $g'\circ g$.
\end{prop}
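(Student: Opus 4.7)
\emph{Proof plan.} I would interpret the existence of the desired pair $(g',K)$ as a section-extension problem against an acyclic Hurewicz fibration, then apply the standard lifting property of closed cofibrations. Write $p_i\colon X_i\to B$ for $i=1,2$. First, by Dold's theorem (the remark following Proposition~\ref{fibr:homotopy_equivalences_under_A}) the homotopy equivalence $g$ is already a fiber homotopy equivalence, so the restriction of $g$ to each fiber of $p_1$ is a homotopy equivalence onto the corresponding fiber of $p_2$.

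Next, define the fibered mapping path space
\[
M \;=\; \bigl\{(x,x',\gamma)\,\bigm|\,p_2(x)=p_1(x'),\;\gamma\colon I\to X_2,\;p_2\circ\gamma\equiv p_2(x),\;\gamma(0)=x,\;\gamma(1)=g(x')\bigr\}
\]
with projection $\pi\colon M\to X_2$, $(x,x',\gamma)\mapsto x$. A section of $\pi$, written $s(x)=(x,g'(x),K(x,\cdot))$, is exactly the data of a fiber map $g'\colon X_2\to X_1$ together with a fiber homotopy $K\colon \id_{X_2}\simeq g\circ g'$. A Dold-type argument using the HLP for $p_1$ and the standard fibration $X_2^I\to X_2\times X_2$ shows that $\pi$ is a Hurewicz fibration; its fiber over $x$ is the homotopy fiber of the restriction $g_{p_2(x)}$ at $x$, which is contractible by the first step, so $\pi$ is acyclic. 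The given pair $(f',H)$ determines a section of $\pi$ over $Y_2$ by
\[
s_0(y) \;=\; \bigl(\iota_2 y,\;\iota_1 f'(y),\;\iota_2\circ H(y,\cdot)\bigr),
\]
since $\iota_2\circ H(y,\cdot)$ is a fiber path from $\iota_2 y$ to $\iota_2\circ f\circ f'(y)=g\circ\iota_1 f'(y)$. Because $\iota_2\colon Y_2\hookrightarrow X_2$ is a closed cofibration and $\pi$ is an acyclic Hurewicz fibration, the standard lifting property produces a section $s$ of $\pi$ extending $s_0$, and the resulting pair $(g',K)$ satisfies $g'|_{Y_2}=\iota_1 f'$ and $K|_{Y_2\times I}=\iota_2\circ H$ by construction.

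It remains to check that $g'$ is automatically a two-sided fiber homotopy inverse: if $g''$ is any fiber homotopy inverse of $g$ supplied by the first step, then $g'\simeq g''\circ g\circ g'\simeq g''$ fiberwise, so $g'\circ g\simeq g''\circ g\simeq\id_{X_1}$ over $B$. The main obstacle is the verification that $\pi$ is a Hurewicz fibration: given a path in $X_2$ starting from the image of a point in $M$, one must first lift the induced path in $B$ to a homotopy of the $X_1$-coordinate (via the HLP for $p_1$), and then extend the path coordinate continuously in the fibers of $p_2$. Both steps are standard but the bookkeeping is slightly fiddly; once this is established, the argument reduces to the formal lifting property of closed cofibrations against acyclic Hurewicz fibrations.
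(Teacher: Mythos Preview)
Your argument is correct. The paper does not give its own proof; it cites May and remarks that the classical argument goes through verbatim once one replaces the homotopy extension property by the fibered homotopy extension property. May's argument is a direct hands-on construction: one first produces some fiber homotopy inverse of $g$, then uses the (fibered) HEP for $Y_2\hookrightarrow X_2$ repeatedly to adjust it and the accompanying homotopy so that they extend $f'$ and $H$.

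Your route is genuinely different in packaging: you encode the desired pair $(g',K)$ as a section of the fibered mapping-path space $\pi\colon M\to X_2$, and reduce everything to the single lifting axiom ``closed cofibration vs.\ acyclic Hurewicz fibration''. This is the Str\o m-model-structure viewpoint, and it is cleaner conceptually. The place where the work hides is exactly where you flag it: verifying that $\pi$ is a Hurewicz fibration. This does go through, and the key step is again the fibered HEP for $p_2$ (one lifts the $X_1$-coordinate along $p_1$, and then fills a horseshoe $\{0,1\}\times I\cup I\times\{0\}\hookrightarrow I\times I$ against $p_2$ to transport the path coordinate). So under the hood both arguments use the same ingredient; your version trades several ad hoc applications of FHEP for one structured lifting problem, at the cost of having to set up $M$ and check that $\pi$ is a fibration.

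One small remark: the statement as written has $f'\circ f$ and $g'\circ g$ where only $f\circ f'$ and $g\circ g'$ type-check (since $H$ and $K$ land in $Y_2$ and $X_2$); you silently corrected this, which is the right reading.
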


\subsection*{Associated fibration, connections, and regularity}

For $f\colon X\to B$, denote by
$\mathcal E(X)\to B$ the functorially associated fibration, with
$\mathcal E(X)$ being the following pull-back:
\[\xymatrix{
{\mathcal E(X)} \ar[rr] \ar[d]^\gamma  && X \ar[d]^f\\
B^I \ar[rr]^{\eval_0} && B
}\]

\begin{lem}
If $B$ is metrizable, and $p\colon E\to B$ is a fibration, then
the inclusion $E\subset\mathcal E(E)$ is a fiberwise (strong)
deformation retract.
\end{lem}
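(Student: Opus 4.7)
The plan is to apply Corollary \ref{fibr:cofibration_and_h_eq_is_retract} to the fibration $\gamma\colon\calE(E)\to B$ and the inclusion $i\colon E\hookrightarrow\calE(E)$, $e\mapsto(e,c_{p(e)})$. The corollary then immediately delivers a fiberwise (strong) deformation retract. To invoke it I need four ingredients: that $\gamma$ is a fibration (standard, holds unconditionally), that $p$ is a fibration (given), that $i$ is a homotopy equivalence, and that $i$ is a cofibration. Only the last step will genuinely require metrizability of $B$.

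For the homotopy equivalence, the projection $r\colon\calE(E)\to E$, $(e,\omega)\mapsto e$, is a retraction of $i$, and the map
\[((e,\omega),s)\longmapsto (e,\omega_s),\quad \omega_s(t):=\omega((1-s)t),\]
is a (non-fiberwise, unparametrized) strong deformation retract from $\id_{\calE(E)}$ onto $E$; it pointwise fixes the constant-path subspace and lands in it at $s=1$. No hypothesis on $B$ or $p$ is used here.

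For the cofibration, the plan is to exhibit $(\calE(E),E)$ as a Str\o m NDR pair in $\CGHaus$, which suffices. Fix a metric $d$ on $B$ and set
\[u(e,\omega):=\min\Bigl(1,\sup_{t\in I}d(\omega(0),\omega(t))\Bigr).\]
Then $u$ is continuous on $\calE(E)\subset E\times B^I$ and satisfies $u^{-1}(0)=E$; combined with the deformation $H$ from the previous paragraph (which fixes $E$ and carries all of $\calE(E)$ into $E$ at time $1$), this yields the NDR structure and hence a cofibration.

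I expect the one nontrivial point to be continuity of $u$. The inner function $(\omega,t)\mapsto d(\omega(0),\omega(t))$ is continuous on $B^I\times I$ because $B^I$ carries the compactly generated mapping-space topology and $d$ is continuous; since $I$ is compact, the supremum in $t$ is then continuous in $\omega$ by standard properties of function spaces in $\CGHaus$. This is precisely where the metric hypothesis is used. Once $i$ is known to be a cofibration, Corollary \ref{fibr:cofibration_and_h_eq_is_retract} applies and upgrades the unparametrized deformation retract to a fiberwise one, finishing the proof.
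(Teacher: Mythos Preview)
Your proof is correct and takes essentially the same approach as the paper. Both arguments reduce to showing that $E\hookrightarrow\calE(E)$ is a cofibration via an explicit Str{\o}m NDR structure built from a metric on $B$, and then invoke Corollary~\ref{fibr:cofibration_and_h_eq_is_retract}; the only cosmetic difference is that the paper first constructs the NDR structure on $(B^I,B)$ (your function $u$ is exactly their $\varphi$ precomposed with the projection $\calE(E)\to B^I$) and then observes it pulls back, whereas you write it down directly on $(\calE(E),E)$.
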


\begin{proof}
The inclusion $B\to B^I$ of constant functions is a (strong)
deformation retract; so it is a cofibration if there is a map
$\varphi\colon B^I\to I$ such that $B=\varphi^{-1}(0)$. If $d$ is
a metric on $B$, then 
\[\varphi(a)=\sup\bigl( \{d(a(0), a(x)); x\in I \}\cap [0,1]\bigr)\]
is clearly such a map. 

Now, since the inclusion $B\to B^I$ is a section of the fibration
$\eval_0$, it induces a section of the fibration $\mathcal
E(E)\to E$, which then is also a cofibration. The result follows from Corollary \ref{fibr:cofibration_and_h_eq_is_retract}.
\end{proof}

For a map $p\colon E\to B$, a connection is a lift in
the following diagram
\[\xymatrix{
{\mathcal E(E)}\times 0 \ar[rr] \arinclinv[d] && E \ar[d]^p\\
{\mathcal E(E)}\times I \ar[r]^{\gamma\times\id_I}
\ar@{.>}[rru]^C &
B^I\times I
\ar[r]^\eval & B
}\]
Clearly, if $p$ is a fibration, then a connection always exists.
Conversely, the existence of a connection implies that $p$ is a
fibration. Indeed, to give a homotopy lifting problem
\[\xymatrix{
X\times 0 \arinclinv[d] \ar[rr] && E \ar[d]^p\\
X\times I \ar[rr] && B
}\]
is the same thing as to give a map $\alpha\colon X\to \mathcal
E(E)$,
and the composite $C\circ(\alpha\times\id_I)\colon X\times I\to
E$ is then a solution
of the homotopy lifting problem.

\begin{lem}[(Hurewicz)]\label{fibr:regularity}
Any fibration $p\colon E\to B$ over a metrizable space is regular, i.e.~homotopy lifting problems can be solved in such a way that constant paths are lifted to constant paths. More precisely, if $A\subset X$ is a cofibration, and in the homotopy lifting problem
\[\xymatrix{
X\times 0 \cup A\times I \ar[rr] \ar[d] && E \ar[d]^p\\
X\times I \ar@{.>}[rru] \ar[rr]^h && B
}\]
the given lift over $A$ is regular, then there is a lift over $X$ which is regular.
\end{lem}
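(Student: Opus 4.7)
The plan is Hurewicz's classical approach: build a \emph{regular lifting function} $\lambda\colon\mathcal E(E)\to E^I$ for $p$ --- a section of the evaluation fibration $q\colon E^I\to\mathcal E(E)$, $\eta\mapsto(\eta(0),p\eta)$, satisfying $\lambda(e,c_{p(e)})=c_e$ on constant paths --- and then use $\lambda$ together with the fibered HEP to solve the relative lifting problem. The metrizability hypothesis enters through the previous lemma, which identifies $E\hookrightarrow\mathcal E(E)$ as a cofibration.

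For the construction of $\lambda$, I start from an arbitrary lifting function $\lambda_0$, which exists because $p$ is a fibration. Define $G\colon E\times I\to E^I$ by $G(e,t)(s):=\lambda_0(e,c_{p(e)})(s(1-t))$; this is a homotopy from $\lambda_0|_E$ at $t=0$ to the constant section $e\mapsto c_e$ at $t=1$, and each $G(e,t)$ is a section of $q$ over the point $e\in E\subset\mathcal E(E)$ (as it lies in the fiber of $p$ over $p(e)$ and starts at $e$). Apply the fibered HEP to the cofibration $E\hookrightarrow\mathcal E(E)$ and the fibration $q$ (itself a Hurewicz fibration, as follows from $p$ being one): the data $(\lambda_0,G)$ on $\mathcal E(E)\times 0\cup E\times I$ extend to a homotopy of sections $\tilde G\colon\mathcal E(E)\times I\to E^I$ with $\tilde G(\cdot,0)=\lambda_0$ and $\tilde G|_{E\times I}=G$. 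Then $\lambda:=\tilde G(\cdot,1)$ satisfies $\lambda(e,c_{p(e)})=c_e$, so it is regular.

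For the relative lifting problem, let $\sigma_0\colon X\to E$ denote the initial lift on $X\times 0$ and $\sigma_A\colon A\times I\to E$ the given regular lift on $A\times I$. Set $H(x,t):=\lambda(\sigma_0(x),h(x,\cdot))(t)$; this is a regular lift of $h$ extending $\sigma_0$, which on $A\times I$ may disagree with $\sigma_A$. Both $H|_A$ and $\sigma_A$ are regular lifts of $h|_A$ starting at $\sigma_0|_A$, so they are joined by a fiberwise homotopy $K\colon A\times I\times I\to E$ with $p\circ K(a,t,u)=h(a,t)$, $K(a,t,0)=H(a,t)$, $K(a,t,1)=\sigma_A(a,t)$, and $K(a,0,u)=\sigma_0(a)$. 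Since both endpoints are regular, $K$ may further be chosen constant in $t$ over those $a$ where $h(a,\cdot)$ is constant. Extending $K$ across the cofibration $A\hookrightarrow X$ by fibered HEP yields $\tilde K\colon X\times I\times I\to E$ with $\tilde K(\cdot,\cdot,0)=H$; the slice $\tilde K(\cdot,\cdot,1)$ is the required lift. The main obstacle is this last step: ensuring that $\tilde K(\cdot,\cdot,1)$ remains regular. This is arranged by constructing $K$ on $A$ through a uniform device --- for instance, by applying $\lambda$ itself to a canonical one-parameter reparametrization of paths in $B$ --- so that the regularity condition propagates through the fibered HEP extension.
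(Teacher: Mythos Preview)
Your construction of a regular lifting function $\lambda$ via the fibered HEP for the cofibration $E\hookrightarrow\mathcal E(E)$ is correct and gives a clean proof of the first (absolute) assertion. This is genuinely different from the paper's route: the paper never builds a regular lifting function, but instead fixes a metric on $B$, takes the diameter map $d\colon B^I\to\RR$, solves the lifting problem arbitrarily to get $H$, and then reparametrizes the time variable by a function of $d(\gamma_x)$ so that paths with $d(\gamma_x)=0$ are forced to lift constantly. Your approach trades this explicit metric trick for the abstract input of the previous lemma; the paper's buys a one-line modification that treats all points of $X$ uniformly.

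Where your argument breaks down is the relative case, and you identify the spot yourself. After producing a regular lift $H$ and a fiber homotopy $K$ on $A\times I\times I$ between $H\vert_A$ and $\sigma_A$, you extend $K$ to $\tilde K$ on $X\times I\times I$ by the fibered HEP for $A\hookrightarrow X$ and take the slice $\tilde K(\cdot,\cdot,1)$. But the HEP gives you no control whatsoever over this slice at points $x\in X\setminus A$ where $h(x,\cdot)$ happens to be constant: the set of such $x$ is only closed, not a cofibration pair with $X$, so you cannot prescribe $\tilde K$ there. Your last sentence (``applying $\lambda$ itself to a canonical one-parameter reparametrization \dots\ so that the regularity condition propagates'') is a hope, not an argument; nothing you have written forces regularity to survive the extension. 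The paper's device avoids this difficulty in principle because its reparametrization depends only on the diameter of the image path in $B$, hence is defined simultaneously on all of $X$ and requires no separate extension step. If you want to rescue your approach, the standard fix is to abandon the two-stage ``lift, then correct by homotopy'' scheme and instead build the relative lift in one shot from $\lambda$ together with an explicit NDR retraction $X\times I\to X\times 0\cup A\times I$, so that regularity of $\lambda$ feeds directly into regularity of the output.
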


\begin{proof}
Choose a metric on $B$, and let
\[d\colon B^I\to\RR\]
be the continuous map which assigns to a path its diameter. For $x\in X$, denote by $\gamma_x$ the path in $B$ given by evaluating $h$ on $x$. 

If $H$ is any solution of the homotopy lifting problem above, then 
\[H'(x,t):= H(x, \max(1, \frac{t}{d(\gamma_x)}))\]
is a regular solution. If $H$ is regular on $A$, then $H'$ agrees with $H$ on $A$.
\end{proof}

\subsection*{Fiberwise glueing}

There are two possible ways of glueing fibrations along
cofibrations: Firstly, glueing the fibers and keeping the base
space fixed, and secondly, glueing
fibrations with same fibers over different base spaces. We
begin with the first case.

Here are two useful lemmas that will be freely used.

\begin{lem}[({\cite[Lemma
1.26]{Lueck(1989)}})]\label{fibr:luecks_lemma}
Let 
\[\xymatrix{
A \arinclinv[d]^j \ar[rr]^f && Y \arinclinv[d]^J\\
X\ar[rr]^F && Z
}\]
be a push-out with $j$ a cofibration, and let $p\colon E\to Z$ be
a
fibration.
Then the induced square
\[\xymatrix{
f^*J^* E \ar[rr]^{\bar f} \arinclinv[d]^{\bar j} && J^*E
\arinclinv[d]^{\bar
J}\\
F^* E \ar[rr]^{\bar F} && E
}\]
is a push-out with $\bar j$ a cofibration.
\end{lem}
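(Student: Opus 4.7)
The plan is to derive both assertions from a single key technical fact, which I will call the \emph{sub-lemma}: the pullback of a cofibration along a fibration is again a cofibration. Once that is established, the two claims in Lemma \ref{fibr:luecks_lemma} fall out by direct application.

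First I would prove the sub-lemma: if $\pi\colon E'\to X$ is a fibration and $i\colon A\hookrightarrow X$ is a cofibration, then $\pi^{-1}(A)\hookrightarrow E'$ is a cofibration. The proof uses the retract characterization of cofibrations. Starting from a retraction $r=(r_1,r_2)\colon X\times I\to X\times\{0\}\cup A\times I$, I would construct a retraction $\tilde r\colon E'\times I\to E'\times\{0\}\cup\pi^{-1}(A)\times I$ as follows. For each $(e,t)\in E'\times I$, use the homotopy lifting property of $\pi$ to lift the path $s\mapsto r_1(\pi(e),st)$ in $X$ to a path in $E'$ starting at $e$, and let $\tilde e$ denote its endpoint. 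Set $\tilde r(e,t):=(\tilde e,\,r_2(\pi(e),t))$. This automatically lands in $E'\times\{0\}\cup\pi^{-1}(A)\times I$: either $r_2(\pi(e),t)=0$, or $r_1(\pi(e),t)\in A$, in which case $\pi(\tilde e)\in A$. The continuity of $\tilde r$, and in particular the fact that $\tilde r$ is the identity on $\pi^{-1}(A)\times I$ (paths over $A$ must lift to constants), is achieved by invoking regularity of the fibration (Lemma \ref{fibr:regularity}), carrying out the lifting parametrically with the cofibration boundary condition.

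Given the sub-lemma, the cofibration claim for $\bar j$ is immediate: apply the sub-lemma to the pullback fibration $F^*p\colon F^*E\to X$ (which is a fibration since $p$ is) and to the cofibration $j\colon A\hookrightarrow X$; one has $(F^*p)^{-1}(A)=f^*J^*E$, so $\bar j$ is a cofibration. For the pushout claim, the square commutes by naturality of pullback. Since $j$ is a cofibration, so is $J\colon Y\to Z$ (cofibrations are preserved under pushout), and applying the sub-lemma to $p$ and $J$ yields that $\bar J\colon J^*E\to E$ is a closed cofibration. A straightforward analysis of the pushout structure $Z=F(X)\cup J(Y)$ shows that the canonical map $F^*E\cup_{f^*J^*E}J^*E\to E$ is a continuous bijection. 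The topology matches because, on the one hand, the left-hand side carries the mapping-cylinder-like topology induced by the cofibration $\bar j$, and on the other hand $E$ admits the corresponding decomposition along the mapping cylinder neighborhood of $J(Y)\subset Z$ transported up via the fibration $p$.

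The main obstacle is the construction of $\tilde r$ in the sub-lemma. Lifting path by path is easy, but making the choices globally continuous and ensuring the result is a genuine retraction (rather than one only up to homotopy) requires the parametrized HLP with cofibration boundary conditions—this is precisely where regularity of fibrations is essential. After this sub-lemma is in hand, the deduction of both parts of Lemma \ref{fibr:luecks_lemma} is essentially formal diagram-chasing.
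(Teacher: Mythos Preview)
The paper does not supply its own proof of this lemma; it is quoted from L\"uck's book. So there is nothing in the paper to compare against, and you are essentially being asked to prove the result from scratch.

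Your reduction to the sub-lemma (pullback of a cofibration along a fibration is a cofibration) is the right idea, and that sub-lemma is precisely Str{\o}m's Theorem~12 in \cite{Strom(1968)}. But your argument for the sub-lemma has a real gap: you rely on regularity of the fibration via Lemma~\ref{fibr:regularity}, and that lemma requires the base space to be metrizable---a hypothesis absent from Lemma~\ref{fibr:luecks_lemma}. Without regularity, the lift of the deformation need not be stationary over $A$, so your $\tilde r$ fails to be a retraction on $\pi^{-1}(A)\times I$. Your parenthetical fix (``carrying out the lifting parametrically with the cofibration boundary condition'') is circular: imposing a boundary condition along $\pi^{-1}(A)\times I$ presupposes that $\pi^{-1}(A)\hookrightarrow E'$ has the homotopy extension property, which is exactly the sub-lemma you are proving. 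Str{\o}m's argument avoids regularity by a more careful manipulation of the NDR data; you should consult it.

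Your pushout argument is also too vague to stand as written. Here is a clean route in $\CGHaus$: the quotient map $q\colon X\sqcup Y\to Z$ remains a quotient after applying $-\times E$ (cartesian closedness of $\CGHaus$), and the graph $\Gamma_p\cong E$ is closed in $Z\times E$ since $Z$ is Hausdorff; restricting $q\times\id_E$ to the closed saturated subspace $F^*E\sqcup J^*E=(q\times\id_E)^{-1}(\Gamma_p)$ is therefore still a quotient map onto $E$. This yields the pushout directly and, incidentally, does not use that $p$ is a fibration---the fibration hypothesis is needed only for the cofibration claim about $\bar j$.
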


\begin{lem}\label{fibr:elementary_lemma}
Let $E\to B$ be a map, and let 
\[\xymatrix{
E_0 \ar[rr]^g \arinclinv[d]^i && E_1 \arinclinv[d]\\
E_2 \ar[rr] && E
}\]
be a push-out. Then, for $f\colon A\to B$, the diagram 
\[\xymatrix{
f^*E_0 \ar[rr]^{g'} \ar[d]^{i'} && f^*E_1 \ar[d]\\
f^*E_2 \ar[rr] && f^*E
}\]
is a push-out, too.
\end{lem}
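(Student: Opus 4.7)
The plan is to show directly that $f^*E$, equipped with the subspace topology from $A \times E$, satisfies the universal property of the pushout of $f^*E_1$ and $f^*E_2$ along $f^*E_0$. By functoriality of pullback, the candidate square in the statement is commutative, so there is an induced continuous comparison map $\Phi\colon (f^*E_1) \cup_{f^*E_0} (f^*E_2) \to f^*E$. First I would verify that $\Phi$ is a bijection: over a point $a\in A$, the fiber of $f^*E$ is $E_{f(a)}$, which set-theoretically is the union of $(E_1)_{f(a)}$ and $(E_2)_{f(a)}$ meeting in $(E_0)_{f(a)}$, and this matches the fiberwise description of the pushout on the left.

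Next, I would check that $\Phi$ is a homeomorphism by showing that $f^*E$ carries the pushout topology. Since $i$ is a cofibration, so is its pushout $E_1 \hookrightarrow E$ (standard fact), and the images of $E_1$ and $E_2$ in $E$ are closed subspaces whose intersection is the image of $E_0$. Pulling back along $f$, the subspaces $f^*E_1, f^*E_2$ are closed in $f^*E$, cover $f^*E$, and intersect exactly in $f^*E_0$. Given any subset $F \subset f^*E$ whose intersections with $f^*E_1$ and $f^*E_2$ are closed in those subspaces, $F$ is a union of two closed subsets of $f^*E$ and hence closed; this is precisely the condition that $f^*E$ carries the pushout topology.

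The only (mild) obstacle is the careful bookkeeping to ensure that pullback preserves the relevant closed inclusions and that the ``two closed subspaces'' argument behaves as expected in $\CGHaus$. An alternative, more abstract route would be to invoke that $\CGHaus$ is locally cartesian closed, so that $f^*$ is a right adjoint and automatically preserves all colimits (in particular pushouts); the direct verification above avoids this heavier machinery and matches the elementary spirit of the appendix.
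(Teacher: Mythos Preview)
Your closed-cover argument has a real gap. The lemma assumes only that the vertical map $i\colon E_0\to E_2$ is a cofibration; the horizontal map $g\colon E_0\to E_1$ is arbitrary. While the pushout $E_1\hookrightarrow E$ of $i$ is indeed again a cofibration (hence a closed embedding), the other map $E_2\to E$ need not even be injective. For instance, take $E_0=\{0,1\}$, $E_2=[0,1]$, $E_1=\{\ast\}$ with $i$ the inclusion and $g$ constant; the pushout is $S^1$ and $E_2\to E$ is the quotient map $[0,1]\to S^1$. Thus $f^*E_2$ is not a subspace of $f^*E$ in general, and the ``two closed subspaces cover $f^*E$'' argument simply does not apply. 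Your proof would go through if $g$ were also a closed cofibration, but that is not part of the hypothesis.

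The alternative route does not rescue the situation. First, a minor slip: for $f^*$ to preserve colimits it must be a \emph{left} adjoint (i.e.\ admit a right adjoint $\Pi_f$), not a right adjoint. More importantly, $\CGHaus$ is cartesian closed but \emph{not} locally cartesian closed, so no such right adjoint is available for general $f$.

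The paper's proof sidesteps both issues by using only cartesian closedness: the functor $(-)\times A$ preserves pushouts, giving a homeomorphism $Q:=(E_1\times A)\cup_{E_0\times A}(E_2\times A)\xrightarrow{\cong} E\times A$. Each $f^*E_i$ is a \emph{closed} subspace of $E_i\times A$ (it is the preimage of the diagonal in $B\times B$), independently of any injectivity assumption on $g$. One then checks directly that a subset of $P:=f^*E_1\cup_{f^*E_0}f^*E_2$ is closed iff its saturated preimage in $f^*E_1\amalg f^*E_2$ is closed, iff the same holds in $E_1\times A\amalg E_2\times A$; hence $P\to Q$ is a closed embedding, and since $P\to f^*E\subset Q$ is already a continuous bijection onto the closed subspace $f^*E$, it is a homeomorphism.
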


We will see down below that, in the case of fibrations, the vertical maps are actually cofibrations. 

\begin{proof}
The canonical map from the push-out $P:=f^*E_1\cup_{f^* E_0} f^* E_2$ to $f^*E$ is a continuous bijection. To show that it is a homeomorphism, it is enough to show that $P$ carries the subspace topology of $Q:=E_1\times A\cup_{E_0\times A} E_2\times A$ (indeed the natural map $Q\to E\times A$ is a homeomorphism). Therefore we are going to show that the injective continuous map $P\to Q$ is closed.

The inclusion of $f^*E_i$ into $E_i\times A$ is a closed embedding for $i=0,1,2$. Now a subset $Z\subset P$ is closed if and only if it is the image of $\bar Z\subset f^*E_1\coprod f^*E_2$ which is both closed and saturated, i.e. if for $x\in f^*E_0$, we have $i'(x)\in \bar Z$ if and only if $g'(x)\in \bar Z$.

Now $\bar Z$ considered as a subset of $f^*E_1\coprod f^*E_2$ is closed and saturated if and only if it is closed and saturated as a subset of $E_1\times A\coprod E_2\times A$. In this case, the image of $Z$ in $Q$ is closed again, by definition of the topology of $Q$.
\end{proof}

A particularly simple case of glueing arises when all the maps involved are cofibrations.

\begin{lem}\label{fibr:fiberwise_glueing}
Let
\begin{equation}\label{fibr:glueing_diagram}\xymatrix{
E_0 \arincl[rr] \arinclinv[d] && E_1 \arinclinv[d]\\
E_2 \arincl[rr] && E
}\end{equation}
be a push-out square of spaces, with all maps cofibrations, and
let
$p\colon
E\to B$ be a map. If $p\vert_{E_i}\colon E_i\to B$ are fibrations
for
$i=0,1,2$, then so is $p$.
\end{lem}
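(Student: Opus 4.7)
The plan is to verify the homotopy lifting property of $p$ directly, piecing together lifts from the three sub-fibrations and then gluing them over the closed cover of $E$.

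First, I would record a preliminary structural fact. Since the cofibrations $E_0 \hookrightarrow E_i$ are closed inclusions in $\CGHaus$, the pushout structure forces $E_1$ and $E_2$ to be closed subspaces of $E$ with $E_1 \cup E_2 = E$ and $E_1 \cap E_2 = E_0$. Consequently, for any lifting problem
\[
\xymatrix{
X \ar[r]^\alpha \ar[d]_{i_0} & E \ar[d]^p \\
X \times I \ar[r]^h & B,
}
\]
setting $X_i := \alpha^{-1}(E_i)$ yields a closed cover $X = X_1 \cup X_2$ with $X_1 \cap X_2 = X_0$.

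Next, I would construct the lift $H \colon X \times I \to E$ in three stages. In Stage 1, use the fibration $p|_{E_0}$ to lift $h|_{X_0 \times I}$ starting from $\alpha|_{X_0}$, producing $H_0 \colon X_0 \times I \to E_0$. In Stage 2, for each $i = 1, 2$, extend the combined data on $(X_i \times 0) \cup (X_0 \times I)$ to a lift $H_i \colon X_i \times I \to E_i$ covering $h|_{X_i \times I}$, applying the fibered homotopy extension property of $p|_{E_i}$. In Stage 3, glue: since $H_1$ and $H_2$ both restrict to $H_0$ on $X_0 \times I$, the closed cover property $X \times I = (X_1 \times I) \cup (X_2 \times I)$ with intersection $X_0 \times I$ lets them combine into the desired continuous lift $H$.

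The main obstacle is Stage 2, which requires the inclusion $X_0 \hookrightarrow X_i$ (equivalently, $(X_i \times 0) \cup (X_0 \times I) \hookrightarrow X_i \times I$) to be a closed cofibration. Pullbacks of closed cofibrations along arbitrary continuous maps are \emph{not} generally cofibrations, so this step cannot be dispatched by formal pullback. The resolution is to exploit the cofibration $E_0 \hookrightarrow E_i$ in the target together with the fibration property of $p|_{E_i}$: using a Str\o{}m pair $(\phi_i, K_i)$ for $(E_i, E_0)$, one can produce a ``relative collar" structure of $X_0$ in $X_i$ by setting $\tilde\phi_i := \phi_i \circ \alpha|_{X_i}$ (which satisfies $\tilde\phi_i^{-1}(0) = X_0$) and constructing the deformation part through the fibered HEP of $p|_{E_i}$, lifting the base deformation $K_i \circ (\alpha|_{X_i} \times \id)$ fiberwise. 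With such a Str\o{}m pair for $X_0 \hookrightarrow X_i$ in hand, the fibered HEP of $p|_{E_i}$ delivers the required $H_i$, and the remainder of the argument is formal gluing.
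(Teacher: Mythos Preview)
Your overall strategy and identification of the obstacle are correct, but the proposed fix for Stage~2 does not work. To produce a Str\o m pair for $(X_i,X_0)$ you would need a deformation $\tilde K_i\colon X_i\times I\to X_i$, i.e.\ a homotopy \emph{with values in $X_i$}. The data you have only produce maps into $E_i$: the composite $K_i\circ(\alpha\vert_{X_i}\times\id)$ lands in $E_i$, and the fibered HEP of $p\vert_{E_i}$ again yields maps into $E_i$, not into $X_i$. There is no mechanism to get back into $X_i$, since $X_i$ is just an arbitrary closed subset of an arbitrary space $X$. In fact $X_0\hookrightarrow X_i$ can genuinely fail to be a cofibration: take $B=*$, $E_i=I$, $E_0=\{0\}$, and let $\alpha\colon X=I\to I$ be a map whose zero set is a Cantor set; then $X_0=\alpha^{-1}(0)$ is a Cantor set in $X_i=I$, which is not a cofibration. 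So the obstacle you flagged is real and your resolution does not close it.

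The paper sidesteps this issue by passing to the \emph{universal} lifting problem. Rather than solving one lifting problem at a time, it chooses a connection (a path-lifting function) for each $p\vert_{E_i}$ and glues those. A connection for $p\vert_{E_i}$ is a lift defined on the specific domain $\mathcal E(E_i)=E_i\times_B B^I$, and the point is that for \emph{this particular} domain the inclusion $\mathcal E(E_0)\hookrightarrow\mathcal E(E_i)$ is again a cofibration (essentially because $\mathcal E(-)$ is a fibred product with $B^I$ and such products preserve cofibrations). Hence one can first pick a connection $C_0$ for $E_0$, then extend it via the fibered HEP to connections $C_1,C_2$ for $E_1,E_2$, and finally glue to a connection on $\mathcal E(E)=\mathcal E(E_1)\cup_{\mathcal E(E_0)}\mathcal E(E_2)$ for $p$. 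The moral is: do your compatible-extension argument once on the universal domain $\mathcal E(E)$, where the pushout-of-cofibrations structure is inherited from $E$, rather than on each individual $X$, where it is lost.
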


\begin{proof}
Since $E_0\to E_i$ is a cofibration ($i=1,2$), so is $\mathcal
E(E_0)\to\mathcal
E(E_i)$. So we can find connections $C_i$ for $E_i$ that are
compatible with a
given connection $C_0$ for $E_0$. We thus obtain a connection
\[\singlebox\mathcal E(E)=\mathcal E(E_1)\cup_{\mathcal E(E_0)} \mathcal
E(E_2)
\to E=
E_1\cup_{E_0} E_2.\esinglebox\]
\end{proof}

\begin{cor}\label{fibr:cofibration_preserved_under_pullback}
Let $f\colon E'\to E$ be a fiberwise map between fibrations over
$B$.
If $f$ is
a cofibration, then
\begin{enumerate}
 \item the canonical map from the mapping
cylinder $\cyl(f)$ to $B$ is a fibration, and
\item for any map $g\colon A\to B$, the induced map $g^*f\colon
g^*E'\to g^*E$
is a cofibration.
\end{enumerate}
\end{cor}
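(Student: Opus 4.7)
The plan is to handle (i) directly via the fiberwise glueing lemma and (ii) by promoting the cofibration $f$ to a fiberwise NDR pair over $B$, which then pulls back strictly along $g$.

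For (i), I would realize $\cyl(f)$ as the pushout of the diagram $E\xleftarrow{f}E'\xrightarrow{\iota_0}E'\times I$, where $\iota_0$ denotes the inclusion at $0$. Both maps in this pushout are cofibrations ($f$ by assumption, $\iota_0$ because $\{0\}\hookrightarrow I$ is a cofibration and products preserve this property), and each of $E$, $E'$, and $E'\times I$ is the total space of a fibration over $B$ (for $E'\times I$, the composite with the projection to $E'$ is a fibration, since products with $I$ preserve fibrations). Lemma \ref{fibr:fiberwise_glueing} then immediately yields that $\cyl(f)\to B$ is a fibration.

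For (ii), I would choose Str\o m NDR data $(u,H_0)$ for $f$: a function $u\colon E\to I$ with $u^{-1}(0)=f(E')$ and a homotopy $H_0\colon E\times I\to E$ satisfying $H_0(\cdot,0)=\id$, $H_0(e',t)=e'$ for $e'\in f(E')$, and $H_0(e,t)\in f(E')$ whenever $t>u(e)$. The function $u$ is already compatible with any pullback, but $H_0$ in general fails to commute with $p\colon E\to B$. Observe, however, that $p\circ H_0$ and $p\circ\Proj_E$ agree on the cofibrant subspace $E\times\{0\}\cup f(E')\times I\hookrightarrow E\times I$. I would then consider the homotopy $L(e,t,s):=p\circ H_0(e,(1-s)t)$ interpolating between $p\circ H_0$ (at $s=0$) and $p\circ\Proj_E$ (at $s=1$); it is stationary in $s$ over the cofibrant subspace. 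Applying the regular path-lifting of Lemma \ref{fibr:regularity} relative to this subspace, and starting from $H_0$ at $s=0$, produces a lift $\tilde L\colon E\times I\times I\to E$; its slice $H:=\tilde L(\cdot,\cdot,1)$ is then a fiberwise homotopy ($p\circ H=p\circ\Proj_E$) agreeing with $H_0$ on the cofibrant subspace. The fiberwise pair $(u,H)$ pulls back strictly along $g\colon A\to B$ to NDR data for $(g^*E,g^*E')$, exhibiting $g^*f$ as a cofibration.

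The hard part will be preserving the Str\o m contraction property $H(e,t)\in f(E')$ for $t>u(e)$ after the correction. Since $E'\to B$ is a fibration and $f$ is an (injective, closed) cofibration between fibrations, the restriction $p|_{f(E')}\colon f(E')\to B$ is itself a fibration, and $f(E')\hookrightarrow E$ is a cofibration of total spaces of fibrations over $B$; these two features should allow the regular lift to be arranged so that lifts of paths beginning inside $f(E')$ remain in $f(E')$, thereby transporting the contraction condition from $H_0$ to $H$. Part (i) enters here indirectly: it guarantees that the coherent fibered structure needed to carry out such a relative regular lift is in fact available.
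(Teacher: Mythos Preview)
Your argument for (i) is exactly the paper's: apply Lemma \ref{fibr:fiberwise_glueing} to the pushout presentation of $\cyl(f)$.

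For (ii) you take a genuinely different and harder route. The paper does not try to make Str\o m NDR data fiberwise. Instead it uses the characterization that $f$ is a cofibration if and only if the inclusion $\Cyl(f)\hookrightarrow E\times I$ is an acyclic cofibration, i.e.\ admits a retraction. Part (i) shows $\Cyl(f)\to B$ is a fibration, and $E\times I\to B$ is one as well, so Corollary \ref{fibr:cofibration_and_h_eq_is_retract} upgrades the inclusion to a \emph{fiberwise} strong deformation retract. A fiberwise retraction $E\times I\to\Cyl(f)$ pulls back strictly along $g$, and since $g^*\Cyl(f)=\Cyl(g^*f)$ (Lemma \ref{fibr:elementary_lemma}), one obtains a retraction $g^*E\times I\to\Cyl(g^*f)$, i.e.\ $g^*f$ is a cofibration. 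This is a two-line proof with no hard part.

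Your approach shares the underlying idea---make the relevant structure fiberwise so it pulls back---but chooses the wrong structure to transport. The full NDR data $(u,H)$ carries the extra contraction condition $H(e,t)\in E'$ for $t>u(e)$, and it is precisely this condition that you cannot easily preserve under the correction $H_0\rightsquigarrow H$. Your suggestion (arrange lifts starting in $E'$ to remain in $E'$, via a compatible connection) handles the \emph{absolute} lifting, but here you are performing a \emph{relative} lift, extending prescribed data on $(E\times I)\times\{0\}\cup(E\times\{0\}\cup E'\times I)\times I$; compatibility of connections does not obviously survive the extension step. This is a real gap, not just a missing sentence. (A smaller point: you invoke Lemma \ref{fibr:regularity}, which needs $B$ metrizable, whereas the Corollary has no such hypothesis; the fibered HEP would suffice for the relative lift itself.) The paper's trick avoids all of this by observing that the bare retraction already encodes the cofibration, so there is no contraction condition to preserve.
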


So, if a fiberwise map between fibrations is a cofibration on the level of total spaces, then it is also a cofibration on each fiber. The converse statement does not hold, as can be easily seen using Tulley's construction (described below).

\begin{proof}
(i) follows directly from Lemma \ref{fibr:fiberwise_glueing}.

(ii) The map $f$ being a cofibration is equivalent to saying the
the canonical map $\Cyl(f)\to E\times I$ is a cofibration
and a homotopy equivalence. So, as $\Cyl(f)$ is a fibration over
$B$, Corollary \ref{fibr:cofibration_and_h_eq_is_retract} implies
that the inclusion is a fiberwise deformation retract. Therefore
its restriction, namely the inclusion $\Cyl(g^*f)\to g^*E\times
I$, is still a retract. 
\end{proof}

If we relax the cofibration condition somewhat, we have to put stronger hypotheses on the base space. See section \ref{strspaces} for the definition of the property ULC.

\begin{thm}[({\cite[Thm.~2.5]{ArnoldJ(1973)}})]\label{fibr:fiberwise_glueing_2}
Lemma \ref{fibr:fiberwise_glueing} still holds if we only assume that the vertical maps in Diagram \eqref{fibr:glueing_diagram} are cofibrations, provided that $B$ is metrizable ULC. 

In particular, the mapping cylinder of a fiberwise map between fibrations over such spaces $B$ is again a fibration over $B$.
\end{thm}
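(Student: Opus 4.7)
The plan is first to prove the ``in particular'' clause about the mapping cylinder $\Cyl(f)$ of a fiberwise map $f\colon E'\to E$ between fibrations over $B$, and then to deduce the general pushout statement from it by a cylinder reduction together with Lemma~\ref{fibr:fiberwise_glueing}.

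For the mapping cylinder case, set $M = \Cyl(f) = (E'\times I)\cup_{E'\times\{1\}} E$ and construct a regular connection $C\colon \mathcal{E}(M)\times I \to M$. Both $E'\times I \to B$ and $E \to B$ are fibrations, and by Lemma~\ref{fibr:regularity} they admit regular connections $C_{E'\times I}$ and $C_E$. A first attempt defines $C$ by $C_{E'\times I}$ on the cylinder part $E'\times [0,1)$ and by $C_E$ on the end $E$. These pieces are defined on disjoint subsets of $M$ so no literal matching is required, but continuity fails at the interface $E'\times\{1\} = f(E')\subset E$: the limit of the lift from the cylinder side is $f(C_{E'}(e',\gamma,t))$, while the lift from the $E$-side is $C_E(f(e'),\gamma,t)$, and these two paths in the fiber over $\gamma(t)$ need not coincide. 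To bridge the discrepancy I would exploit the ULC data: whenever $(\gamma(0),\gamma(t))$ lies in the ULC neighborhood of the diagonal, the canonical short homotopy connecting $\gamma(0)$ to $\gamma(t)$ lifts, using the fibration $E\to B$, to a canonical fiberwise path between the two candidate lifts. This correction can be inserted on a fiberwise collar neighborhood of $E$ in $M$, which exists because $E\hookrightarrow M$ is a cofibration (Corollary~\ref{fibr:cofibration_and_h_eq_is_retract}). For a general path $\gamma$, a Lebesgue-number argument subdivides it into finitely many sub-paths of diameter within the ULC neighborhood; one lifts and corrects each piece and concatenates, using regularity of $C_E$ and $C_{E'\times I}$ to preserve continuity at the joining times.

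For the general pushout $E = E_1\cup_{E_0} E_2$, with $E_0\hookrightarrow E_2$ and $E_1\hookrightarrow E$ cofibrations but $E_0\to E_1$ possibly not a cofibration, factor $E_0\to E_1$ as a cofibration $E_0\hookrightarrow Z:=\Cyl(E_0\to E_1)$ followed by the fiber homotopy equivalence $r\colon Z\to E_1$. By the mapping cylinder case just established, $Z\to B$ is a fibration; hence in the modified pushout $\tilde E := Z\cup_{E_0} E_2$ all four edges are cofibrations between fibrations, and $\tilde E\to B$ is a fibration by Lemma~\ref{fibr:fiberwise_glueing}. The comparison map $\tilde E\to E$ induced by $r$ and $\id_{E_2}$ is a fiber homotopy equivalence over $B$, so $E\to B$ inherits the Hurewicz fibration property from $\tilde E$ (a map fiber homotopy equivalent over $B$ to a Hurewicz fibration is itself a Hurewicz fibration, via an explicit path-lifting argument together with Proposition~\ref{fibr:homotopy_equivalences_under_A}).

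The principal obstacle is the blending step in the mapping cylinder case. The two local connections live on disjoint subsets of $M$, so the only place where they interact is at the interface $f(E')$, and producing a canonical interpolation there is exactly what the ULC hypothesis on $B$ buys. Regularity is indispensable in the subdivision-and-concatenation step, since otherwise the local corrections would not splice together at the joining times without introducing fresh discontinuities.
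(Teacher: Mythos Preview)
The paper does not prove this theorem; it simply cites \cite[Thm.~2.5]{ArnoldJ(1973)}. So there is no proof in the paper to compare against. That said, your reduction step contains a genuine gap.

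Your second paragraph asserts that a map over $B$ which is fiber homotopy equivalent to a Hurewicz fibration is itself a Hurewicz fibration. This is false. Take $B=E'=I$ with $q=\id_I$, and $E=I$ with $p(t)=\max(0,2t-1)$. The maps $f=p\colon E\to E'$ and $g\colon E'\to E$, $g(s)=(s+1)/2$, satisfy $fg=\id$ and $gf\simeq_B\id$ via the straight-line homotopy inside the contractible fiber $p^{-1}(0)=[0,\tfrac12]$. Yet $p$ is not a fibration: the identity path in $B$ has no lift starting at $0\in E$, since for $s>0$ the fiber $p^{-1}(s)$ is the single point $(s+1)/2$, which does not approach $0$ as $s\to 0$. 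Proposition~\ref{fibr:homotopy_equivalences_under_A}, which you cite, assumes \emph{both} spaces are already fibrations and therefore cannot be used here. Nor is there an evident fiberwise retraction $E\hookrightarrow\tilde E\to E$: the obvious candidate for a section $E\to\tilde E$ (include $E_1$ at the target end of $Z$, identity on $E_2$) is not well-defined on $E_0$, because the two prescriptions land at opposite ends of the cylinder in $Z$.

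Your sketch of the mapping-cylinder case points in the right direction---Arnold's argument does exploit the ULC structure to control lifts near the interface---but the key interpolation is not correctly described. The ``canonical fiberwise path between the two candidate lifts'' is undefined: the ULC data produces a path in $B$ from $\gamma(0)$ to $\gamma(t)$, not a path in the fiber $p^{-1}(\gamma(t))$ connecting $f(C_{E'}(e',\gamma)(t))$ to $C_E(f(e'),\gamma)(t)$. If you want a self-contained argument you should work through Arnold's paper directly; he proves the general pushout statement without first reducing to mapping cylinders.
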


\subsection*{Glueing over different base spaces}

Recall the definition of ULC from section \ref{strspaces}.

\begin{prop}[(compare {\cite[Thm.~4.2]{ArnoldJ(1972)}})]
\label{fibr:glueing_over_base_spaces}
Let $X$ be metrizable, let
\[\xymatrix{
X_0 \arincl[rr] \arinclinv[d] && X_1 \arinclinv[d]\\
X_2 \arincl[rr] && X
}\]
be a push-out square with all maps cofibrations, and $X_i$ ULC
($i=1,2$). If $B$ is another metrizable
space, and $p\colon E\to B\times X$ is a map such
that the restriction of $p$ to $p^{-1}(B\times X_i)\to B\times
X_i$ is a fibration ($i=1,2$), then $p$ is a fibration.
\end{prop}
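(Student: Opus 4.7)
The plan is to verify the Hurewicz homotopy lifting property directly, by assembling a connection for $p$ out of connections for the two given subfibrations. Setting $E_i := p^{-1}(B \times X_i)$, I first observe that by Lemma~\ref{fibr:elementary_lemma} together with Corollary~\ref{fibr:cofibration_preserved_under_pullback}, the space $E$ is the push-out $E_1 \cup_{E_0} E_2$ along cofibrations, and each restriction $p_i\colon E_i \to B \times X_i$ is a fibration by hypothesis. Since $B \times X_i$ is metrizable, Lemma~\ref{fibr:regularity} provides regular connections $C_i\colon \mathcal{E}(E_i)\times I \to E_i$, which I would choose so as to agree with a fixed common connection $C_0$ over $B \times X_0$.

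Next, to extend $C_0, C_1, C_2$ to a connection $C$ on all of $\mathcal{E}(E) \times I$, I would use the ULC structure on each $X_i$ together with the NDR data coming from the cofibration $X_0 \hookrightarrow X_i$. The ULC homotopy, combined with a metric on $X_i$, lets me contract any sufficiently short path in $X_i$ back to its initial point; the NDR data gives a neighborhood of $X_0$ inside $X_i$ that retracts continuously onto $X_0$. Together they provide a procedure which, given a path $\gamma$ in $X$, subdivides $\gamma$ into finitely many segments, each lying in a single $X_i$, with all breakpoints in $X_0$. Applying $C_i$ segment-by-segment and concatenating yields a candidate lift, and the agreement $C_1 = C_2 = C_0$ over $B \times X_0$ makes the concatenation consistent at the breakpoints.

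The hard part is continuity: the combinatorial type of the subdivision (the number and location of breakpoints) is not continuous in $\gamma$, so a naive concatenation fails to depend continuously on $(e,\gamma) \in \mathcal{E}(E)$. The resolution, which is the technical heart of the argument in \cite{ArnoldJ(1972)}, is to arrange the subdivision so that the appearance or disappearance of a breakpoint happens through a degenerate segment whose lift is stationary at a point of $B \times X_0$, where $C_1$ and $C_2$ coincide. A partition of unity on $\mathcal{E}(E)$, available thanks to the metrizability of $B \times X$, is then used to interpolate between adjacent reparametrizations and produce a genuinely continuous connection $C$, from which the fibration property for $p$ follows.
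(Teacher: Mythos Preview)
Your approach differs from the paper's, and the continuity step you defer to \cite{ArnoldJ(1972)} is not what that paper supplies. The paper argues \emph{locally}: for each $x\in X_0$ it produces a neighbourhood $U\subset X$ over which $p$ is a fibration, and then (implicitly) appeals to a local-to-global theorem for Hurewicz fibrations---this is what Arnold's paper actually proves. The local step exhibits $p\vert_{B\times U}$ as a fibrewise retract of a product fibration by building a slicing function, and the key ingredient is a ULC homotopy $\sigma\colon U\times U\times I\to X$ engineered so that each resulting path lies in one $X_i$ on $[0,\tfrac12]$ and in the other on $[\tfrac12,1]$. This fixed two-piece structure is the whole point: one splices exactly two half-lifts at a single midpoint, so no variable subdivision ever enters.

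Your global scheme has a real obstacle. Connections do not respect concatenation: even for a path entirely in $B\times X_1$, applying $C_1$ directly gives a different lift from subdividing and applying $C_1$ to each piece separately. Hence the lift jumps whenever the number of breakpoints changes, and your degenerate-segment idea does not help---when a middle segment shrinks to a point, the two flanking lifts now concatenate to something that need not agree with the undivided lift they are supposed to approach. There is no partition-of-unity averaging available, since fibres carry no linear structure to average in. Arnold's theorem is local-to-global, not a continuity result for piecewise lifts, so the deferral leaves the gap open.

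Two minor citation slips in your setup: the pushout $E=E_1\cup_{E_0}E_2$ does not follow from Lemma~\ref{fibr:elementary_lemma} (which goes the other way) but from the elementary fact that $E_1,E_2\subset E$ are closed and cover $E$; and $E_0\hookrightarrow E_i$ being a cofibration follows from Lemma~\ref{fibr:cofibration_condition_on_pairs} with $Y=\emptyset$, not from Corollary~\ref{fibr:cofibration_preserved_under_pullback}.
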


\begin{proof}[of Proposition \ref{fibr:glueing_over_base_spaces}]
We are going to show that for each $x\in X_0$ there is a
neighborhood
$U$ of
$x$ in $X$ such that $p$, restricted over $B\times U$, is a
fibration.

The proof uses the observation that any fiberwise retract of a
fibration
is a
fibration. In our case, write $p=(p_1,p_2)$, and let, for some
$U\subset X$, 
$q=p_1\times\id_U\colon E\vert_{B\times U}\times U\to B\times U$.
The
map $q$ is
a fibration, and there is a commutative diagram 
\[\xymatrix{E\vert_{B\times U} \ar[rd]^{p} \ar[rr]^{\id\times
p_2}
&&
E\vert_{B\times U}\times U \ar[ld]_q \\
& B\times U
}\]
So it is enough to construct a fiberwise map (often called ``slicing function'') 
\[\varphi\colon E\vert_{B\times U}\times U\to E\vert_{B\times U}\]
such that $\varphi\circ(\id\times p_2)=\id$.

The hypotheses on the base spaces imply (see e.g.~\cite[p.~182]{ArnoldJ(1972)})
that
there is a neighborhood $U$ of $x$ and a homotopy
\[\sigma\colon U \times U\times I\to X\]
between the projection onto the first and the projection onto the
second
factor, such that the homotopy is stationary on the diagonal and,
for
each
$(u,u')\in U\times U$, the corresponding path $\alpha$ from $u$ to
$u'$ has the
following additional property:

For each $i,j$, if $\alpha(0)\in X_i$, then
$\alpha([0,\frac12])\subset X_i$, and if $\alpha(1)\in X_j$, then
$\alpha([\frac12,1])\subset X_j$. 

Consider now the following homotopy lifting problem:
\[\xymatrix{
E\vert_{B\times U}\times U \times 0 \arinclinv[d] \ar[rr]^{\Proj_E} &&
E\ar[d]^{p}\\
E\vert_{B\times U}\times U\times I
\ar[r]^{p\times\id}\ar@{.>}[rru]
&B\times U\times  U\times I 
\ar[r]^(.65){\id\times\sigma} & B\times X
}\]
This particular problem can be solved at least on
$E\vert_{B\times U}\times U\times [0,\frac12]$ (by choosing compatible regular connections for the fibrations over $X_i$, $i=0,1,2$). Then we can solve the problem similarly on
$E\vert_{B\times U}\times U\times
[\frac12,1]$, thus obtaining a solution of the whole problem. 

As both $B$ and $X$ are metrizable, there is a regular solution by Lemma \ref{fibr:regularity}. Such a solution of our problem, evaluated at 1, is the map $\varphi$ we need.
\end{proof}

Next we deal with the question whether this glueing procedure
preserves cofibrations. Here is a preparatory lemma.

\begin{lem}[(Compare {\cite[Thm.~13]{Strom(1968)}})]\label{fibr:cofibration_condition_on_pairs}
Let $i\colon A\hookrightarrow B$ be a cofibration, $p\colon X\to B$ a
fibration,
$j\colon Y\hookrightarrow X$ a cofibration such that $p\vert_{Y}\colon
Y\to B$ is a fibration. Then the inclusion
\[X\vert_A\cup Y\hookrightarrow X\]
is a cofibration.
\end{lem}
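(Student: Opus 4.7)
The strategy is to exhibit $X\vert_A \cup Y$ as the union of two closed cofibrations in $X$ and then invoke a union-of-cofibrations theorem.

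First I would show that $X\vert_A := p^{-1}(A) \hookrightarrow X$ is itself a cofibration. Pick a Str{\o}m NDR representation $(u,h)$ of the cofibration $(B,A)$: a map $u\colon B \to I$ with $A \subset u^{-1}(0)$, together with a homotopy $h\colon B \times I \to B$ that starts at $\id_B$, is stationary on $A$, and sends $\{b : u(b) < 1\}$ into $A$ at time $1$. Lifting $h \circ (p \times \id_I)\colon X \times I \to B$ through the fibration $p$, starting from $\id_X$, yields $\tilde h\colon X \times I \to X$. Choosing a regular lift (Lemma \ref{fibr:regularity}) guarantees that constant paths are lifted to constant paths; since $h$ is stationary on $A$, this forces $\tilde h$ to be stationary on $X\vert_A$. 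The pair $(u \circ p,\, \tilde h)$ is then a Str{\o}m NDR representation of $(X, X\vert_A)$, so $X\vert_A \hookrightarrow X$ is a cofibration.

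Next I would combine this with the given cofibration $Y \hookrightarrow X$. Both are closed inclusions (cofibrations in the compactly generated Hausdorff category being automatically closed), so a union theorem for cofibrations -- essentially Str{\o}m's Theorem 13 in \cite{Strom(1968)} referenced in the lemma, or equivalently Lillig's union theorem in $\CGHaus$ -- applies and yields that $X\vert_A \cup Y \hookrightarrow X$ is a cofibration.

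The main obstacle is the union theorem itself: its proof is not formal, and requires explicitly splicing the NDR representations of $(X, X\vert_A)$ and $(X, Y)$ into a single NDR representation of the union, with continuity at the overlap $X\vert_A \cap Y$. Once this ingredient is cited, Step 1 is a direct application of the regular homotopy lifting machinery already developed earlier in this appendix.
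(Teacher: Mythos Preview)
Your approach is genuinely different from the paper's. The paper does not separately establish that $X\vert_A\to X$ is a cofibration and then take a union; instead it directly manufactures a Str{\o}m NDR representation $(\eta,G)$ for the pair $(X,\,X\vert_A\cup Y)$ by intertwining the NDR data $(\varphi,H)$ for $(B,A)$ with the NDR data $(\psi,D)$ for $(X,Y)$, after arranging that $D$ is a fiber homotopy and that a lift $\bar H$ of $H$ preserves $Y$.

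Your modular strategy is viable, but two points need attention. First, Lillig's union theorem has a third hypothesis: the \emph{intersection} $X\vert_A\cap Y=Y\vert_A$ must also be a cofibration in $X$. You do not verify this, and it is exactly where the assumption ``$p\vert_Y$ is a fibration'' enters: apply your Step~1 to $p\vert_Y$ to get $Y\vert_A\hookrightarrow Y$ a cofibration, then compose with $Y\hookrightarrow X$. As written, your outline never uses that hypothesis. Second, your argument for Step~1 invokes regular lifting via Lemma~\ref{fibr:regularity}, which needs $B$ metrizable --- a hypothesis absent from the lemma. The result $X\vert_A\hookrightarrow X$ is indeed a cofibration without metrizability (this is Str{\o}m's Theorem~12, proved by a reparametrization of the lifted homotopy rather than by regularity), and the paper's direct construction likewise avoids it. Incidentally, Str{\o}m's Theorem~13 is not a union theorem; Lillig's result is from 1973.

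What each approach buys: the paper's hands-on construction is self-contained and needs no metrizability; yours is cleaner once you add the intersection check and either cite Str{\o}m's Theorem~12 for Step~1 or accept a metrizability hypothesis (harmless for all applications in the paper).
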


\begin{proof}
By Str\o ms version of the NDR property (see \cite[I.5.14]{Whitehead(1978)}), the inclusion
of $A$
into $B$
being a cofibration is equivalent to the existence of $H\colon
B\times
I\to B$
and $\varphi\colon B\to I$ such that
\begin{enumerate}
\item $A\subset \varphi^{-1}(0)$,
\item $H$ is stationary on $A$,
\item $H_0=\id_B$,
\item $H(b,t)\in A$ whenever $\varphi(b)<t$.
\end{enumerate}

In fact, any retraction $r\colon B\times I\to A\times I\cup B\times 0$ gives rise to
\[H(x,t):=\Proj_B r(x,t)\quad \mathrm{and}\quad \varphi(x):=\sup_{t\in I} (t-\Proj_I r(x,t)).\]

As $Y\hookrightarrow X$ is also a cofibration, let $D\colon
X\times I\to X$, $\psi\colon X\to I$ such that the analogues of conditions (i) to (iv) hold. It follows from Corollary \ref{fibr:cofibration_preserved_under_pullback} that $Y\times I\cup X\times 0$ is a fiberwise retract of $Y\times I$, hence we can even assume that $D$ is also a fiber homotopy over $B$.

Let 
\begin{align*}
\eta\colon & X\to I,  & \eta(x)&=\min(\psi(x),\varphi p(x)),\\
G\colon & X\times I\to X, & G(x,t)&=\bar H[D(x,\min(t,\varphi
p(d))),
\min(t,\eta(x))],
\end{align*}
where $\bar H\colon X\times I\to X$ is a lift of $H$ such that
$\bar
H(Y\times
I)\subset Y$ and $\bar H_0=\id_X$.

Then,
\begin{enumerate}
\item $X\vert_A\cup Y \subset\eta^{-1}(0)$,
\item $G$ is stationary on $X\vert_A\cup Y$,
\item $G_0=\id_X$,
\item if $\eta(x)<t$, then either 
 \begin{enumerate}
  \item $\psi(x)<\varphi p(x)$, in which case $\min(t,\varphi
p(x))>\psi(x)$,
so 
\[D(x,\min(t,\varphi p(x)))\in Y\]
and
\[G(x,t)=\bar H[D(x,\min(t,\varphi p(x))),\min(t,\eta(x))]\in
Y,\]
  \item or $\varphi p(x)\leq\psi(x)$ and $\varphi p(x)<1$, in which case
$\min(t,\eta(x))=\varphi p(x)$, so 
\begin{align*}
\multbox
pG(x,t) & = p\bar H[D(x,\min(t,\varphi p(x))),\varphi p(x)]\\
& =H[pD(x,\min(t,\varphi p(x))),\varphi p(x)]\\
& = H[p(x), \varphi p(x)]\in A. 
\emultbox
\end{align*}
\end{enumerate}
\end{enumerate}
\end{proof}

\begin{prop}\label{fibr:glueing_and_cofibrations}
Let $p\colon E\to B$ be a fibration, $E'\subset E$ a 
subspace such that $p\vert_{E'}\colon E'\to B$ is a fibration.
Suppose that $B$ is a push-out
\[\xymatrix{
B_0 \arincl[rr] \arinclinv[d] && B_1 \arinclinv[d]\\
B_2 \arincl[rr] && B
}\]
with all maps cofibrations. Let $E_i:=E\vert_{B_i}$
and $E'_i:=E'\vert_{B_i}$. If the inclusions $j_i\colon E'_i\to
E_i$ are cofibrations for $i=0,1,2$, then so is the inclusion
$j \colon E'\to E$. 
\end{prop}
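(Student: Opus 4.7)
The plan is to factor the inclusion $j\colon E' \hookrightarrow E$ through the intermediate subspace $E' \cup E_1$, and show that both inclusions
\[E' \hookrightarrow E' \cup E_1 \hookrightarrow E\]
are cofibrations. Cofibrations compose, giving the result.

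First I would set up the pushout decompositions. By Lemma \ref{fibr:luecks_lemma} applied to the pushout $B = B_1 \cup_{B_0} B_2$ and the fibrations $p$ and $p\vert_{E'}$, we obtain
\[E = E_1 \cup_{E_0} E_2, \qquad E' = E'_1 \cup_{E'_0} E'_2\]
as pushouts in $\CGHaus$, with $E_0 \hookrightarrow E_i$ and $E'_0 \hookrightarrow E'_i$ all cofibrations. In particular $E_1, E_2$ are closed in $E$, and $E'_1, E'_2$ are closed in $E'$, so $E' \cap E_i = E'_i$ as closed subspaces of $E_i$.

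For the outer inclusion $E' \cup E_1 \hookrightarrow E$: I would apply Lemma \ref{fibr:cofibration_condition_on_pairs} with $A = B_0 \hookrightarrow B_2 = B$, $X = E_2 \to B_2$ the fibration, and $Y = E'_2$; the hypotheses are met since $B_0 \hookrightarrow B_2$ is a cofibration (part of the given pushout data), $j_2\colon E'_2 \hookrightarrow E_2$ is a cofibration, and $E'_2 \to B_2$ is a fibration (restriction of $p\vert_{E'}$). The conclusion is that $E_0 \cup E'_2 \hookrightarrow E_2$ is a cofibration. Now
\[E' \cup E_1 = E_1 \cup E'_2 = E_1 \cup_{E_0} (E_0 \cup E'_2)\]
as a subspace of $E$ (the set equality follows from $E'_1 \subset E_1$ and $E_0 \subset E_1$; the topological coincidence with the pushout uses that $E_1, E_2$ are closed in $E$, so the subspace topology agrees with the pushout topology). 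Since pushouts preserve cofibrations, the inclusion into $E = E_1 \cup_{E_0} E_2$ induced by $E_0 \cup E'_2 \hookrightarrow E_2$ is a cofibration.

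For the inner inclusion $E' \hookrightarrow E' \cup E_1$: since $E' \cap E_1 = E'_1$, we have
\[E' \cup E_1 = E' \cup_{E'_1} E_1\]
as a subspace of $E$ (again the topological identification uses closedness of $E_1$ and $E'$ in $E$). Since $j_1\colon E'_1 \hookrightarrow E_1$ is a cofibration by hypothesis and pushouts of cofibrations are cofibrations, the inclusion $E' \hookrightarrow E' \cup_{E'_1} E_1$ is a cofibration.

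The main obstacle I anticipate is the bookkeeping around the two topological identifications of $E' \cup E_1$ (once as the pushout $E_1 \cup_{E_0} (E_0 \cup E'_2)$, once as $E' \cup_{E'_1} E_1$), in order to legitimately interpret each step as a pushout of a known cofibration. Both identifications ultimately follow from the elementary fact that in $\CGHaus$ a pushout of two closed inclusions along a common closed subspace coincides with the set-theoretic union taken with the subspace topology, combined with the closedness properties of $E_i \subset E$ and $E'_i \subset E_i$ noted above.
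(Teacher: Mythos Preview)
Your proof is correct and follows essentially the same strategy as the paper: use Lemma~\ref{fibr:cofibration_condition_on_pairs} to obtain the key cofibration $E_0\cup E'_2\hookrightarrow E_2$, then glue. The only difference is organizational. The paper packages the gluing step into a separate Lemma~\ref{fibr:glueing_cofibrations}, which asserts that given a ladder of cofibrations $X_i\hookrightarrow Y_i$ with $X_i\cup_{X_0} Y_0\hookrightarrow Y_i$ cofibrations for $i=1,2$, the induced map $X_1\cup_{X_0}X_2\to Y_1\cup_{Y_0}Y_2$ is a cofibration; your factorization $E'\hookrightarrow E'\cup E_1\hookrightarrow E$ is precisely how one would prove that lemma. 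Your direct argument is in fact slightly more economical: you invoke Lemma~\ref{fibr:cofibration_condition_on_pairs} only once (for $i=2$) and never use the hypothesis that $j_0$ is a cofibration, whereas the paper's more symmetric formulation of Lemma~\ref{fibr:glueing_cofibrations} asks for both applications. (The hypothesis on $j_0$ is in any case redundant, since it follows from $j_1$ via Corollary~\ref{fibr:cofibration_preserved_under_pullback}.) The point-set identifications you flag---that the two pushout descriptions of $E'\cup E_1$ agree with the subspace topology---are indeed the only places needing care, and your closedness argument handles them.
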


This proposition follows at once from Lemma \ref{fibr:cofibration_condition_on_pairs} together with the following Lemma whose proof is an exercise using the definitions.

\begin{lem}\label{fibr:glueing_cofibrations}
Let 
\[\xymatrix{
X_1\arinclinv[d] && X_0\arinclinv[d] \arinclinv[ll] \arincl[rr] && X_2 \arinclinv[d]\\
Y_1 && Y_0 \arinclinv[ll] \arincl[rr] && Y_2
}\]
be a diagram of spaces with all maps cofibrations. If the induced maps
\[X_1\cup_{X_0} Y_0\to Y_1\quad\mathrm{and}\quad X_2\cup_{X_0} Y_0\to Y_2\]
are cofibrations, then so is
\[X_1\cup_{X_0} X_2 \to Y_1\cup_{Y_0} Y_2.\]
\end{lem}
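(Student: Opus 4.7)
The plan is to factor the map $j\colon X_1 \cup_{X_0} X_2 \to Y_1 \cup_{Y_0} Y_2$ through a suitable intermediate space $W$ and to exhibit each of the two resulting arrows as the pushout of one of the hypothesis cofibrations. Since cofibrations in $\CGHaus$ are closed under pushouts and composition, this will suffice.

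I would take as intermediate space $W := X_1 \cup_{X_0} Y_0 \cup_{X_0} X_2$, i.e.~the colimit of the evident three-fold diagram. Associativity of pushouts provides two useful descriptions,
\[W = (X_1 \cup_{X_0} X_2) \cup_{X_0} Y_0 = (X_1 \cup_{X_0} Y_0) \cup_{Y_0} (X_2 \cup_{X_0} Y_0),\]
each of which is convenient for one half of the argument. The first description identifies the map $X_1 \cup_{X_0} X_2 \to W$ as the pushout of the hypothesis cofibration $X_0 \hookrightarrow Y_0$ along the structural map $X_0 \to X_1 \cup_{X_0} X_2$, hence a cofibration.

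The second description lets me further factor the map $W \to Y_1 \cup_{Y_0} Y_2$ as
\[W \to Y_1 \cup_{Y_0} (X_2 \cup_{X_0} Y_0) \to Y_1 \cup_{Y_0} Y_2,\]
where the first arrow is the pushout of the hypothesis cofibration $X_1 \cup_{X_0} Y_0 \to Y_1$ along its structural map into $W$, and the second is the pushout of the hypothesis cofibration $X_2 \cup_{X_0} Y_0 \to Y_2$. Both are therefore cofibrations, and so is their composite; composing in turn with the first factor shows that $j$ is a cofibration.

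The main obstacle is really only bookkeeping: one must check that the two descriptions of $W$ define the same space in $\CGHaus$ and that the composite of the factorization agrees with the canonical map $j$. Both follow formally from the universal property of colimits, so no genuine topological difficulty arises.
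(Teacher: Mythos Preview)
Your argument is correct: factoring through the wide pushout $W=X_1\cup_{X_0}Y_0\cup_{X_0}X_2$ and recognising each step as a pushout of one of the given cofibrations is exactly the kind of formal manipulation the paper has in mind when it calls the lemma ``an exercise using the definitions'' and omits the proof. The only facts you use---stability of cofibrations under pushout and under composition, and associativity of iterated pushouts---are standard in $\CGHaus$, so nothing is missing.
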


\subsection*{Tulley's construction}

\begin{prop}[({\cite{Tulley(1969)}})]\label{fibr:tulleys_result}
Let $p\colon E\to B$ be a fibration over a metrizable space $B$,
and
let
$i\colon E'\to E$ a fiberwise (strong) deformation retract.
Denote 
\[T(i):=E'\times \{0\}\cup E\times (0,1]\subset E\times I.\] 
Then $p\vert_{E'}\colon E'\to B$ is a
fibration, and so is the obvious map $q\colon T(i)\to B\times I$
induced by $p$
and $i$. 
\end{prop}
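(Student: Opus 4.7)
My plan is to split the proof into the two assertions and handle each separately.

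For the first assertion, that $p\vert_{E'}\colon E'\to B$ is a fibration, the fiberwise strong deformation retract hypothesis provides in particular a strict fiberwise retraction $r\colon E\to E'$ with $r\circ i = \id_{E'}$ over $B$. I would then prove directly that any strict fiberwise retract of a fibration is again a fibration: given a lifting problem $(f\colon X\to E',\ h\colon X\times I\to B)$ for $p\vert_{E'}$, post-compose $f$ with $i$ to obtain a lifting problem for $p$, lift using the fibration property of $p$ to get $\tilde h\colon X\times I\to E$, and apply $r$ to obtain $r\circ \tilde h\colon X\times I\to E'$. The initial condition is preserved because $r\circ i = \id_{E'}$, and the projection condition because $r$ is fiberwise.

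For the second assertion, that $q\colon T(i)\to B\times I$ is a fibration, I would use the full fiberwise deformation data: a homotopy $H\colon E\times I\to E$ over $B$ with $H_0 = \id_E$, $H_1 = i\circ r$, and $H_t\vert_{E'} = \id_{E'}$ for every $t$. Given a lifting problem with initial map $f_0\colon X\to T(i)$ and homotopy $h = (\alpha, \beta)\colon X\times I\to B\times I$ satisfying $q\circ f_0 = h(-,0)$, regard $f_0$ as a map into $E\times I$ via the subspace inclusion $T(i)\subset E\times I$. Since $p\times\id_I\colon E\times I\to B\times I$ is a fibration and $B$ is metrizable, Lemma \ref{fibr:regularity} produces a regular lift $\hat h = (\tilde e,\ \beta)\colon X\times I\to E\times I$ of $h$ extending $f_0$. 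My candidate lift is
\[L(x, s) = \bigl(H_{\mu(x, s)}(\tilde e(x, s)),\ \beta(x, s)\bigr)\]
for a suitable continuous $\mu\colon X\times I\to [0,1]$ satisfying $\mu = 1$ on $\beta^{-1}(0)$. Then $L$ indeed lands in $T(i)$: automatically when $\beta(x, s)>0$, and when $\beta(x, s)=0$ because $H_1(\tilde e(x,s)) = i r(\tilde e(x,s))\in E'$. The lift relation $q\circ L = h$ follows from $p\circ H_t = p$.

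I expect the main obstacle to be the construction of $\mu$ with the correct compatibility at $s=0$. Naively one would want $\mu(x,0) = 0$ when $\beta(x,0) > 0$ (so $L(x,0) = f_0(x)$ is unchanged) together with $\mu = 1$ on $\beta^{-1}(0)$; these conditions are incompatible with continuity near points of $X\times\{0\}$ where $\beta_0$ vanishes as a limit of positive values. The crucial observation is that whenever $\beta(x,0)=0$, the initial value $\tilde e(x,0) = e_0(x)$ already lies in $E'$, where $H_t$ is the identity for all $t$, so the precise value of $\mu$ at such points does not affect the initial condition. Exploiting this slack, one can produce a suitable $\mu$ by a Urysohn-type interpolation on the metrizable space $X\times I$ (for instance, based on the distance to $\beta^{-1}(0)$ combined with a cutoff in $s$). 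The verifications that the resulting $L$ is continuous, lies in $T(i)$, extends $f_0$, and lifts $h$ then become routine, with the delicacy concentrated in the choice of $\mu$ near the ``boundary'' between $\{\beta_0 = 0\}$ and $\{\beta_0 > 0\}$ in $X\times\{0\}$.
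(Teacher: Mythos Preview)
Your overall strategy matches the paper's exactly: lift the homotopy first in the ambient fibration $p$ (equivalently $p\times\id_I$) and then correct by the deformation $H$. Your first assertion (fiberwise retracts of fibrations are fibrations) is fine. But the second part has two gaps.

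First, $X$ is an arbitrary test space, so $X\times I$ is \emph{not} metrizable in general; you cannot speak of ``distance to $\beta^{-1}(0)$'' there. (Only $B$ is assumed metrizable.)

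Second, and more seriously, there is \emph{no} continuous $\mu\colon X\times I\to[0,1]$ with the properties you want. You need $\mu=1$ on $A:=\beta^{-1}(0)\cap\{s>0\}$ (to force the first component into $E'$) and $\mu=0$ on $B':=\{(x,0):\beta(x,0)>0\}$ (for the initial condition). A point $(y,0)$ with $\beta(y,0)=0$ can lie in $\overline{A}\cap\overline{B'}$, so no Urysohn-type separation is possible. The ``slack'' you correctly identify (namely that $H_t$ fixes $E'$) does not let you produce a continuous $\mu$; rather, it lets you prove that $L$ is continuous even though $\mu$ is not.

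The paper resolves this by writing down the explicit discontinuous choice $\mu(x,s)=\min(1,\,s/\beta(x,s))$ for $s>0$ and setting $L(x,0)=f_0(x)$ directly. Continuity of $L$ at $s=0$ is then argued by nets: when $(x_\alpha,s_\alpha)\to(x,0)$ and $\beta(x,0)=0$, one has $\tilde e(x_\alpha,s_\alpha)\to e_0(x)\in E'$, and since $H$ is stationary on $E'$ and $I$ is compact, $H_{\mu(x_\alpha,s_\alpha)}(\tilde e(x_\alpha,s_\alpha))\to e_0(x)$ regardless of where $\mu(x_\alpha,s_\alpha)$ wanders in $[0,1]$. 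This compactness-plus-stationarity step is the actual crux, not the construction of $\mu$. (Incidentally, the appeal to regularity of the lift via Lemma~\ref{fibr:regularity} is unnecessary here and is not used in the paper's argument.)
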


\begin{proof}[of Proposition \ref{fibr:tulleys_result}]
Denote by $\pi\colon T(i)\to E$ the obvious projection. 
Let $K\colon E\times I\to E$ be a fiber homotopy between $\id_E$ and a
retraction onto $E'$. Let the left square in the following
diagram be a homotopy lifting problem that we wish to solve.
(Here $J=I=[0,1]$.)
\[\xymatrix{
X\times 0 \arinclinv[d] \ar[rr]^{f} && T(i)
\ar[d]^q \ar[r]^{\pi} & E \ar[d]^p\\
X\times J \ar[rr]^{h=(h_1,h_2)} \ar@{.>}[rru]^{H} && B\times I
\ar[r]^{\Proj}& B
}\]
Using the outer square and the fact that $p$ is a fibration, we
obtain a lift $L\colon X\times J\to E$ of $h_1$ that extends the
$\pi\circ f$. Now, let
$H(x,t):=(H_1(x,t), h_2(x,t))\in E\times I$ with \[H_1(x,t)=
\begin{cases}
K\bigl(L(x,t), \min(1,\frac{t}{h_2(x,t)})\bigr), &\mathrm{if}\;
t>0,\\
\pi\circ f(x) &\mathrm{if}\; t=0 
\end{cases}
\]
(letting $\min(1,\frac{t}{0})=1$).

We have to check continuity in $t=0$. So let $(x_\alpha,
t_\alpha)$ be a net converging to $(x,0)$. The only nontrivial
case is when $h_2(x_\alpha,t_\alpha)$ tends to 0, too. In
this case $L(x_\alpha,t_\alpha)$ converges to $\pi\circ f(x)$ which lies
in $E'$. But if $e_\alpha$ converges to some element $e\in E'$,
then $K(e_\alpha, \tau_\alpha)$ converges to $e$ for any net
$(\tau_\alpha)$ on $I$, as $K$ is stationary on $E'$ and the unit
interval is compact.

Now, whenever $h_2(x,t)=0$, we have $H_1(x,t)\in E'$, thus $H$
really defines a map to $T(i)$. Moreover $p\circ
H_1(x,t)=h_1(x,t)$, as $K$ is a fiber homotopy and $L$ is a lift
of $h_1$, so $q\circ H=h$. 
\end{proof}

\begin{lem}\label{fibr:tulleys_construction_and_cofibrations}
Suppose that we have a square of fibrations
\[\xymatrix{
E'_0 \arincl[r]^{i_0}_\simeq \arinclinv[d]^{j'} & E_0\arinclinv[d]^j\\
E'_1 \arincl[r]^{i_1}_\simeq & E_1
}\]
over $B$, with all maps cofibrations and the horizontal maps fiber homotopy equivalences. If the canonical map $E'_1\cup_{E'_0} E_0\to E_1$ is a cofibration, then so is the inclusion $T(i_0)\to T(i_1)$.
\end{lem}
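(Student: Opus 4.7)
The plan is to interpolate the inclusion $T(i_0)\hookrightarrow T(i_1)$ through an intermediate Tulley space. Let $P:=E'_1\cup_{E'_0}E_0$, with canonical maps $\iota\colon E'_1\to P$ and $\bar\iota\colon P\to E_1$. Since cofibrations in $\CGHaus$ are closed injections and $\bar\iota$ is assumed to be a cofibration, we must have $E_0\cap E'_1=E'_0$ inside $E_1$ (any further overlap would force $\bar\iota$ to identify two distinct points of the pushout), so $P$ coincides set-theoretically with the subspace $E_0\cup E'_1\subset E_1$; the closed gluing lemma applied to the closed cover $\{E_0,E'_1\}$ then identifies the pushout topology on $P$ with the subspace topology from $E_1$. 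I would factor the inclusion as
\[T(i_0)\hookrightarrow T(\iota)\hookrightarrow T(i_1)\]
and exhibit each factor as a cofibration by constructing explicit Str\o m NDR data.

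For the first factor, I would start with Str\o m data $(\phi,H)$ witnessing the cofibration $j'\colon E'_0\hookrightarrow E'_1$ and extend them over the closed cover $\{E_0,E'_1\}$ of $P$ by setting $\bar\phi|_{E_0}:=0$ and $\bar H|_{E_0\times I}:=\id_{E_0}$; the two pieces agree on $E'_0$ because $E'_0\subset\phi^{-1}(0)$ and $H$ is stationary on $E'_0$, so $\bar\phi$ and $\bar H$ are continuous. Then
\[\Phi(e,s):=\bar\phi(e),\qquad G((e,s),t):=(\bar H(e,t),s)\]
should give Str\o m data for the pair $(T(\iota),T(i_0))$: the image stays in $T(\iota)$ because $\bar H$ lands in $P$, $G$ is stationary on $T(i_0)$ since $\bar H|_{E_0\times I}=\id$, and a case split on $s=0$ vs.\ $s>0$ shows that once $\bar\phi(e)<t$ the image $\bar H(e,t)$ lies in $E'_0$ resp.\ $E_0$, so $G((e,s),t)\in T(i_0)$.

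For the second factor, I would use the hypothesis directly: choose Str\o m data $(\psi,K)$ for the cofibration $\bar\iota\colon P\hookrightarrow E_1$, and set
\[\Psi(e,s):=\psi(e),\qquad G'((e,s),t):=(K(e,t),s).\]
The subtle point making $G'$ a well-defined self-map of $T(i_1)$ is that $K$ is stationary on $P\supset E'_1$: for $(e,0)\in E'_1\times\{0\}$ one therefore has $K(e,t)=e\in E'_1$, so $G'$ preserves the $s=0$ stratum; for $s>0$ nothing further needs to be checked. The remaining Str\o m conditions for $(T(i_1),T(\iota))$ are inherited directly from $(\psi,K)$, and $\Psi$ vanishes on $T(\iota)$ because $T(\iota)$ sits entirely above $P\subset\psi^{-1}(0)$. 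Cofibrations compose, so combining the two factors yields the lemma.

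I expect the main obstacle to be not the NDR constructions themselves, which are essentially bookkeeping, but the preliminary topological identification of $P$ as a subspace of $E_1$ with its pushout topology. Without this the piecewise extensions $\bar\phi$ and $\bar H$ fail to be manifestly continuous, and the first step collapses. This is where the cofibration hypotheses on the four edges of the square (ensuring that $E_0,E'_1$ are closed in $E_1$) interact with the assumed cofibration $\bar\iota\colon P\to E_1$ to supply the setup needed for the closed gluing lemma.
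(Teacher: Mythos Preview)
Your argument is correct, and it proceeds along a genuinely different line from the paper's. The paper does not factor through an intermediate Tulley space; instead it constructs NDR data for $(T(i_1),T(i_0))$ in one step by first producing \emph{compatible} Str\o m data on the two vertical pairs --- that is, data $(\varphi,H)$ for $(E_1,E_0)$ which restrict to data $(\varphi',H')$ for $(E'_1,E'_0)$ --- and then setting $\bar\varphi(x,s)$ and $\bar H((x,s),t)$ by applying $\varphi$ or $\varphi'$ (resp.\ $H$ or $H'$) to the first coordinate according to whether $s>0$ or $s=0$. The cofibration hypothesis on $P\to E_1$ is used in the paper only to guarantee that the retraction $E_1\times I\to\Cyl(j)$ can be modified to restrict to a retraction $E'_1\times I\to\Cyl(j')$, a step the paper leaves as ``easily seen''. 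Your factorization $T(i_0)\hookrightarrow T(\iota)\hookrightarrow T(i_1)$ has the virtue that this hypothesis is invoked exactly once, in its raw form, to give the second factor; the hidden work in the paper's compatibility step becomes your explicit first factor.

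One simplification: your preliminary identification of $P$ with the subspace $E_0\cup E'_1\subset E_1$ is not actually needed for the first factor. Since $P$ is a pushout, a continuous map out of $P$ is precisely a compatible pair of maps out of $E_0$ and $E'_1$, so $\bar\phi$ and $\bar H$ are continuous by the universal property alone --- the closed gluing lemma is redundant here. The only place you really need that $\bar\iota$ is a closed embedding is to know that $T(\iota)$, defined as a subspace of $P\times I$, carries the subspace topology from $T(i_1)\subset E_1\times I$; and that much follows immediately from $\bar\iota$ being a cofibration.
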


\begin{proof}
Recall that the cofibration condition implies that there are retractions $r\colon E_1\times I\to\Cyl j$ and $r'\colon E'_1\times I\to\Cyl j'$. The additional condition is easily seen to guarantee that $r$ can be modified in such a way that $r$ and $r'$ are compatible, i.e.~$r\vert_{E'_1\times I}=r'\colon E'_1\times I\to\Cyl j$.

Now recall from the proof of Lemma \ref{fibr:cofibration_condition_on_pairs} how to get a NDR pair structure $(\varphi, H)$ on $(E_1,E_0)$ from  the retraction $r$.

Since $r$ and $r'$ are compatible, $(\varphi,H)$ and the corresponding maps $(\varphi', H')$ for the NDR pair $(E'_1, E'_0)$ are compatible. Hence we can define $\bar\varphi\colon T(i_1)\to I$
by applying either $\varphi$ or $\varphi'$ to the first coordinate of $(x,t)\in T(i_1)\subset E_0\times I$. Define $\bar H\colon T(i_1)\times I\to T(i_1)$ in a similar way. The maps $\bar\varphi$ and $\bar H$ satisfy the properties required for  $(T(i_1), T(i_0))$ to be an NDR pair.
\end{proof}

\subsection*{Fill-in for fibrations}

The goal of this section is to show that fill-ins exist for
fibrations and for fibrations with fiberwise splitting (as considered in section \ref{whtors}). We will see that Tulley's construction, together with Theorem \ref{fibr:fiberwise_glueing_2}, will produce fill-ins. To make sure that we have enough space to deal with homotopies, we redefine now $T(i):=E'\times [0,\frac12]\cup E\times (\frac12,1]$. It can be understood as a glueing of the original construction with the trivial fibration $E'\times I$. So the essential properties remain unchanged.

\begin{prop}\label{fibr:fill_in}
The functor $\mathbf{Fib}(B;F)$ from section
\ref{strspaces} satisfies the Fill-in Condition if $B$ is metrizable ULC.
\end{prop}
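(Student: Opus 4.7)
The plan is to construct $\bar E$ as a fibration over $\Delta^k\times I$ interpolating $E_0$ and $E_1$ through a common enlargement built from the mapping cylinders of $\varphi_0$ and $\varphi_1$, and then to apply Tulley's construction to promote this interpolation to a fibration. First I would form the mapping cylinders $C_i:=\Cyl(\varphi_i)$ for $i=0,1$. Each $C_i\to\Delta^k$ is a fibration: it is the pushout of the fibrations $E_i\times I$ and $E$ along the cofibration $E_i\times\{1\}\hookrightarrow E_i\times I$, so Theorem~\ref{fibr:fiberwise_glueing_2} applies (noting that $\Delta^k$ is metrizable and ULC). Gluing these along their common subspace $E$ produces $D:=C_0\cup_E C_1$, again a fibration over $\Delta^k$ by the same theorem. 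Because each $\varphi_i$ is a fiber homotopy equivalence, the composite inclusion $E_i\hookrightarrow C_i\hookrightarrow D$ is a cofibration and a homotopy equivalence between fibrations over $\Delta^k$; Corollary~\ref{fibr:cofibration_and_h_eq_is_retract} then yields that $E_i\hookrightarrow D$ is a fiberwise strong deformation retract.

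With these preliminaries in place I would apply Tulley's construction (Proposition~\ref{fibr:tulleys_result}) to each of $E_0\hookrightarrow D$ and $E_1\hookrightarrow D$, producing fibrations $T_i\to\Delta^k\times I$ whose fiber is $E_i$ for $t\in[0,\tfrac12]$ and $D$ for $t\in(\tfrac12,1]$. Reparametrize $T_0$ as a fibration over $\Delta^k\times[0,\tfrac12]$ and reparametrize $T_1$, reversed, as a fibration over $\Delta^k\times[\tfrac12,1]$; both have $D$ as the fiber over the slice $\Delta^k\times\{\tfrac12\}$, so they can be glued along this common fiber to produce $\bar E\to\Delta^k\times I$. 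Proposition~\ref{fibr:glueing_over_base_spaces}, applied to the splitting $I=[0,\tfrac12]\cup_{\{1/2\}}[\tfrac12,1]$ (each half being metrizable and ULC), confirms that $\bar E$ is a fibration, and by construction $\bar E|_{t=0}=E_0$, $\bar E|_{t=1}=E_1$. For the morphism $\Phi$, I would assemble the two mapping cylinder retractions $C_i\to E$ (which agree on $E$) into a single retraction $r\colon D\to E$ satisfying $r|_{E_i}=\varphi_i$. Then $\Phi(x,t):=(r(x),t)$ defines a fiberwise map $\bar E\to p^*E$ whose restrictions at $t=0$ and $t=1$ are $\varphi_0$ and $\varphi_1$, as required.

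The relative version then follows because every step of this construction -- mapping cylinder, pushout, Tulley, the assembled retraction $r$ -- commutes with pullback along the inclusion of a subcomplex of $\Delta^k$. Hence applying the same recipe to a second diagram $(F_0,F_1,\psi_0,\psi_1)$ whose data coincides with $(E_0,E_1,\varphi_0,\varphi_1)$ on a collection of faces automatically produces fill-ins $(\bar F,\Psi)$ and $(\bar E,\Phi)$ that agree when restricted to those faces times $I$. The main technical obstacle throughout is verifying that each of the pushouts of fibrations along cofibrations is again a fibration; this is precisely where the hypothesis that $B$ is metrizable and ULC is essential, via Theorem~\ref{fibr:fiberwise_glueing_2} and Proposition~\ref{fibr:glueing_over_base_spaces}, and ultimately rests on the regularity of fibrations over metrizable bases established in Lemma~\ref{fibr:regularity}.
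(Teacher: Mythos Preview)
Your argument is correct and follows essentially the same strategy as the paper: build a common enlargement of $E_0$ and $E_1$ using mapping cylinders, apply Tulley's construction twice, and glue. The one genuine difference is your choice of enlargement. The paper takes the mapping cylinder of the composite $(\varphi_1)^{-1}\circ\varphi_0\colon E_0\to E_1$ (after choosing a fiber homotopy inverse), whereas you use the double mapping cylinder $D=\Cyl(\varphi_0)\cup_E\Cyl(\varphi_1)$. Your version has the pleasant feature that the entire construction of $(\bar E,\Phi)$ is canonical in the input data, so the relative clause follows immediately by naturality under restriction; the paper instead has to invoke Proposition~\ref{fibr:fiber_homotopy_equivalences_of_pairs} to align the choices made in constructing $\Phi$ over different simplices. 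Two minor points: throughout you should write $B\times\Delta^k$ rather than $\Delta^k$ for the base (the ULC hypothesis on $B$ is what makes Theorem~\ref{fibr:fiberwise_glueing_2} apply), and you might note explicitly that $E\hookrightarrow C_i$ is a cofibration so that the pushout $D$ really is covered by that theorem.
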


For the reader's convenience, and to establish notation, we recall the Fill-in property for our special case:

For any three fibrations $E', E'', E$ over $B\times \Delta^k$ and
any two fiber homotopy equivalences $\varphi'\colon E'\to E$ and $\varphi''\colon E''\to E$, the following holds: There exists a fibration $\bar E$ 
over $B\times \Delta^k\times I$ and a fiber homotopy
equivalence $\Phi\colon \bar E\to E\times I$ which restricts
to $\varphi'$ over 0 and to $\varphi''$ over 1. 

Moreover, given two more fibrations $F', F''$ over $B\times \Delta^k$
and fiber homotopy equivalences $\psi'\colon F'\to E$ and $\psi''\colon F''\to E$,
which agree with $(E', E'', \varphi', \varphi'')$ when
restricted to a collection of faces of $B\times \Delta^k$, there are
extensions $(\bar E, \Phi)$ and $(\bar F, \Psi)$ of $(E', E'',
\varphi', \varphi'')$ and $(F', F'', \psi', \psi'')$ that
agree when restricted to the same collection $\times I$.

\begin{proof}[of Proposition \ref{fibr:fill_in}]
Let us first consider a special case: Suppose that the
map $(\varphi'')^{-1}\circ\varphi'\colon E'\to E''$ is fiber
homotopic to a map $\alpha$ which is a cofibration and homotopy equivalence. In this case the
canonical map $T(\alpha)\to B$ is a fibration by Proposition
\ref{fibr:tulleys_result}, and it is easy to construct a map
$\Phi\colon T(\alpha)\to E\times I$ as requested, using a fiber homotopy
$\varphi''\circ\alpha\simeq \varphi'$. 

In the general case, the mapping cylinder $\Cyl(\alpha)$ is a fibration over
$B\times \Delta^k$ by Theorem \ref{fibr:fiberwise_glueing_2}. It comes with a canonical map $\hat\varphi\colon
\Cyl(\alpha)\to E''\to E$. In this case both inclusions of $E'$
and $E''$ into the mapping cylinder are cofibrations and homotopy
equivalences. So, by the previous case, fill-ins exist between
$(E',\varphi')$ and $(\Cyl(\alpha),\hat\varphi)$, and between
$(\Cyl(\alpha),\hat\varphi)$ and $(E'',\varphi'')$. Glueing these
(using the Amalgamation property), one now obtains a fill-in
between $(E',\varphi')$ and $(E'',\varphi'')$.

Proposition \ref{fibr:fiber_homotopy_equivalences_of_pairs} guarantees that this can be chosen to be compatible over smaller simplices.
\end{proof}

The interested reader may want to compare this proof with the intricate argument from \cite[Lemma 17.3]{Hughes-Taylor-Williams(1990)}. --- The same argument, together with Lemma \ref{fibr:tulleys_construction_and_cofibrations} shows:

\begin{prop}\label{fibr:fill_in_with_splitting}
The functor $\mathbf{Fib}(B;(F_i))$ from section
\ref{whtors} satisfies the Fill-in Condition if $B$ is metrizable ULC.
\end{prop}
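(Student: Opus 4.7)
The plan is to adapt the proof of Proposition \ref{fibr:fill_in} to squares $\square$ of fibrations, applying each step of that proof componentwise and verifying that the cofibration and pushout conditions defining an object of $\mathbf{Fib}(B;(F_i))$ are preserved throughout. The essential new technical input is Lemma \ref{fibr:tulleys_construction_and_cofibrations}, which ensures that Tulley's construction preserves cofibration conditions between squares of fibrations.

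First I would treat the special case where we have two $\square$-diagrams $\square', \square$ over $B\times\Delta^k$ together with a morphism $\alpha\colon \square' \to \square$ that is fiberwise a cofibration and a fiber homotopy equivalence, with $\varphi = \alpha$ on the one side and some fiber homotopy equivalence $\varphi''\colon \square''\to\square$ on the other. Apply Tulley's construction componentwise, setting $T(\alpha)_i := T(\alpha_i)$ for $i=0,1,2,3$. By Proposition \ref{fibr:tulleys_result} each $T(\alpha_i)\to B\times\Delta^k\times I$ is a fibration, and by Lemma \ref{fibr:tulleys_construction_and_cofibrations} the inclusions $T(\alpha_i)\hookrightarrow T(\alpha_j)$ are cofibrations whenever the hypothesis on the square holds for $(p_0',p_j',p_0,p_j)$. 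The pushout condition $T(\alpha_3) = T(\alpha_1)\cup_{T(\alpha_0)}T(\alpha_2)$ follows because Tulley's construction is built from the two pieces $p'_i\times[0,\tfrac12]$ and $p_i\times(\tfrac12,1]$, and pushouts commute with pushouts. This yields the desired fill-in $\bar\square = T(\alpha)$.

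For the general case, given arbitrary fiber homotopy equivalences $\varphi'\colon \square'\to\square$ and $\varphi''\colon \square''\to\square$, pass to $(\varphi'')^{-1}\circ\varphi'$ up to fiber homotopy and replace it by the mapping cylinder diagram $\Cyl(\alpha)$ componentwise. Each $\Cyl(\alpha_i)$ is a fibration over $B\times\Delta^k$ by Theorem \ref{fibr:fiberwise_glueing_2}, the inclusions $p'_i, p''_i\hookrightarrow \Cyl(\alpha_i)$ are cofibrations and fiber homotopy equivalences, and one checks by Lemma \ref{fibr:elementary_lemma} together with Proposition \ref{fibr:glueing_and_cofibrations} that $\Cyl(\alpha)$ inherits the cofibration/pushout structure of a $\square$-diagram. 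Now apply the special case twice (once to each inclusion into $\Cyl(\alpha)$) and glue the two resulting fill-ins over their common face, using the Amalgamation property for $\mathbf{Fib}(B;(F_i))$ to see that the result is again an object of $\mathbf{Fib}(B\times\Delta^k\times I;(F_i))$ with a morphism to $p^*\square$ restricting to $\varphi'$ and $\varphi''$ at the endpoints.

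For the relative statement, one makes the choices of homotopy inverses and fiber homotopies compatible with the data already given on the specified faces of $\Delta^k$, using Proposition \ref{fibr:fiber_homotopy_equivalences_of_pairs} to extend fiber homotopy equivalences of pairs; all subsequent steps of the construction are then performed over $B\times\Delta^k\times I$ rel the specified faces.

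The main obstacle will be simultaneously carrying through all four cofibration conditions (three inclusions plus the pushout describing $p_3$) at every step: Tulley's construction, the mapping cylinder replacement, and the final glueing. Each individual verification reduces to an already-proved lemma (Lemma \ref{fibr:tulleys_construction_and_cofibrations}, Theorem \ref{fibr:fiberwise_glueing_2}, and Proposition \ref{fibr:glueing_and_cofibrations} respectively), so the work consists mainly in assembling them in the right order rather than in introducing new ideas.
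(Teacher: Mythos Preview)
Your proposal is correct and follows essentially the same approach as the paper, whose proof consists of the single sentence ``The same argument, together with Lemma \ref{fibr:tulleys_construction_and_cofibrations} shows [the proposition].'' You have accurately unpacked what ``the same argument'' means: apply the proof of Proposition \ref{fibr:fill_in} componentwise to the square, use Lemma \ref{fibr:tulleys_construction_and_cofibrations} to propagate the cofibration conditions through Tulley's construction, and handle the general case via componentwise mapping cylinders and Amalgamation.
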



\section{Comparison with the unparametrized case}\label{section:unparametrized_case}

The aim of this section is to give an elementary argument that the param\-etrized Whitehead torsion reduces to the classical Whitehead torsion in the unparam\-etrized setting. It is the linking part between classical simple homotopy theory and higher algebraic $K$-theory.

\begin{prop}
Let $M$ be a compact topological $n$-manifold. Then, the map
\[\tau\colon\pi_0\calS_n(M)\to\pi_0\Omega\Wh(M)\cong\Wh(\pi_1 M)\]
sends $f\colon N\to M$ to the Whitehead torsion of $f$. 
\end{prop}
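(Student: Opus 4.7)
The plan is to unfold the definition of $\tau$ in the unparametrized setting ($B$ a point) and identify the resulting element of $\pi_1\Wh(M)$ with the classical Whitehead torsion.

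First, I would describe $\chi^\%(f)$ explicitly. For $B=*$, the universal characteristic pair restricts via the classifying map of $f\colon N\to M$ to the excisive datum $f_*\chi_e(N)\in A^\%(M)$, together with the canonical path $\gamma_f\colon f_*\chi(N)\rightsquigarrow \chi(M)$ in $A(M)$ provided by Lemma \ref{DWW:naturality_of_A_theory_characteristic}. Under the convention of construction (A) in section \ref{subsection:background_on_characteristics} (where $\chi(X)\in A(X)$ is the class of the retractive space $X\times S^0=X\sqcup X$), the push-forward $f_*\chi(N)$ is represented by the retractive space $M\sqcup N$ over $M$ (with retraction $f$ on the second summand), and $\gamma_f$ is represented by the weak equivalence $(\id_M,f)\colon M\sqcup N\to M\sqcup M$ in $\Rfd(M)$. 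Likewise $\chi^\%(\id_M)=(\chi_e(M),c_{\chi(M)})$ with constant path.

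Next I would analyse the $\pi_0$-class of $\tau(f)=\chi^\%(f)-\chi^\%(\id_M)$ using the fibre sequence $A^\%(M)\to A(M)\to \Wh(M)$. The associated long exact sequence yields a connecting map $\partial\colon \pi_1\Wh(M)\to\pi_0 A^\%(M)=H_0(M;K_0(\ZZ))=\ZZ$ sending $\tau(f)$ to $\chi_e(N)-\chi_e(M)$, which vanishes as $f$ is a homotopy equivalence. Hence $\tau(f)$ lies in the image of $\pi_1 A(M)/\pi_1 A^\%(M)=\Wh(\pi_1 M)$ (under Waldhausen's identification), represented explicitly by the loop $\gamma_f\cdot c^{-1}$ in $A(M)$, i.e.\ by the weak equivalence $(\id_M,f)\colon M\sqcup N\to M\sqcup M$ of retractive spaces.

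Finally I would identify this class with the classical Whitehead torsion. A weak equivalence in $\Rfd(M)$ of the form $(\id_M,f)$ corresponds, under Waldhausen's translation, to the $K_1$-class over $\ZZ[\pi_1 M]$ of the acyclic relative cellular chain complex $C_*(\widetilde{\Cyl(f)},\widetilde N)$; by definition this class modulo $\pm\pi_1 M$ is the classical Whitehead torsion $\tau(f)$. The main obstacle is precisely this last step: tracing Waldhausen's comparison between weak equivalences of retractive spaces and $K_1$ of the integral group ring. The proof is ``elementary'' in the sense that beyond Waldhausen's foundational identification $\pi_1\Wh(X)\cong\Wh(\pi_1 X)$, no further deep input (such as the stable parametrized $h$-cobordism theorem) is needed, and all intermediate steps reduce to a direct bookkeeping of the definitions.
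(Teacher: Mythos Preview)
Your outline captures the right overall shape --- unwind the definitions over a point and identify the resulting element with the classical torsion --- but it glosses over the one step that carries the content. The path $\gamma_f$ given by the weak equivalence $(\id_M,f)\colon M\sqcup N\to M\sqcup M$ is a path in $A(M)$, not a loop; to extract a class in $\Wh(\pi_1 M)\cong\coker\bigl(\pi_1A^\%(M)\to\pi_1A(M)\bigr)$ you must close it up using lifts of its endpoints to $A^\%(M)$. But the parametrized torsion is \emph{defined} via the Dwyer--Weiss--Williams excisive characteristic, so the relevant lifts are $f_*\chi_e(N)$ and $\chi_e(M)$ in the DWW sense, not the ``obvious'' Waldhausen lifts coming from the objects $M\sqcup N$ and $M\sqcup M$ under $\vert s\mathcal R_f(M)\vert\to A^\%(M)$. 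Your Euler-characteristic argument only shows that these two pairs of lifts lie in the same component of $A^\%(M)$; it does not show that exchanging one pair for the other leaves the resulting $\Wh$-class unchanged. Without that, the sentence ``represented explicitly by the loop $\gamma_f\cdot c^{-1}$'' is not yet justified.

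This comparison is exactly what the paper supplies: it observes that the DWW and Waldhausen excisive characteristics agree when $M$ is contractible (the homotopy fibre is then contractible, so there is nothing to check) and that both are additive for codimension-one splittings (a consequence of Waldhausen's additivity theorem), whence they agree for every compact manifold by handle induction. With that in hand the paper also finishes differently from you: rather than chasing the long exact sequence it uses the cofibration sequence $M\amalg M\rightarrowtail\cone(\phi)\twoheadrightarrow\Sigma(M\amalg N)$ in $\mathcal R_f(M)$, together with $[\Sigma Y]=-[Y]$, to identify $\tau(\phi)$ with the object $\cone(\phi)\in\mathcal C^h_f(M)$, which is visibly the geometric representative of the classical Whitehead torsion under $\pi_0\vert s\mathcal C^h_f(M)\vert\cong\Wh(\pi_1 M)$.
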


To prove this proposition, we need to compare the models for $A$-theory we use with Waldhausen's ones. We will make use of Waldhausen's notation from \cite{Waldhausen(1985)} without further remarks. In this notation, Waldhausen's models are
\begin{align*}
A^\%(X) & = \Omega\vert sS_\bullet\mathcal R_f(X^{\Delta^\cdot})\vert,\\
A(X) & = \Omega\vert wS_\bullet\mathcal R_f(X^{\Delta^\cdot})\vert,\\
\Omega\Wh(X)&  = \vert s\mathcal C^h_f (X)\vert
\end{align*}
The assembly map $A^\%(X)\to A(X)$ is induced by the inclusion of categories $s\mathcal R_f(X)\to w\mathcal R_f(X)$. 

\begin{rem}
\begin{enumerate}
\item This model for $A(X)$ has $\pi_0 A(X)=\ZZ$, whereas the one used by Dwyer-Weiss-Williams has $K_0(\ZZ[\pi(X)])$ as path components. But since here all spaces are homotopy finite, the path component of their characteristic, which is the Euler characteristic, lies in $\ZZ$, so the homotopy fiber
\[\hofib_{\chi(M)}(A^\%(M)\to A(M))\]
is the same. 
\item Waldhausen's models presented here are functors from simplicial sets and not from spaces. So we assume all manifolds to be triangulated. Strictly speaking, we have to replace the manifold $M$ by a triangulated closed disc bundle $A$ over $M$, which is always possible by \cite[Essay 3]{Kirby-Siebenmann(1977)}. 
\end{enumerate}
\end{rem}

Consider the composites
\begin{align*}
\vert s\mathcal R_f(X)\vert & \to \Omega\vert sS_\bullet\mathcal R_f(X)\vert \to \Omega\vert sS_\bullet\mathcal R_f(X^{\Delta^\cdot})\vert=A^\%(X)\\
\vert w\mathcal R_f(X)\vert & \to \Omega\vert wS_\bullet\mathcal R_f(X)\vert \to \Omega\vert wS_\bullet\mathcal R_f(X^{\Delta^\cdot})\vert=A(X)
\end{align*}
The first maps in both composites are the maps reminiscent of the group completion, whereas the second maps come from the inclusion $A_0\to A_\bullet$ which exist for any simplicial space $A_\bullet$. For any compact manifold $M$, the object $M\amalg M$ of $\mathcal R_f(M)$ (with obvious section and retraction) defines an element $\chi^\%(M)\in A^\%(M)$ and an element $\chi(M)\in A(M)$. Since $\alpha\chi^\%(M)=\chi(M)$, we even obtain an element 
\[\chi^\%(M)\in\pi_0\hofib_{\chi(M)}(A^\%(M)\to A(M)).\]
We are going to call this element Waldhausen's excisive characteristic. (This characteristic is \emph{not} excisive in the sense of \cite{Dwyer-Weiss-Williams(2003)}, but this difference is not important here. See the introduction of \cite{Weiss(2002)} for a more detailed discussion.)

Notice that the homotopy invariant characteristic $\chi$ defined here is the same as the one of Dwyer-Weiss-Williams, modulo the homotopy equivalence 
\[\Omega\vert wS_\bullet\mathcal R_f(X)\vert \to \Omega\vert wS_\bullet\mathcal R_f(X^{\Delta^\bullet})\vert.\]
Not surprisingly, the excisive characteristics agree as well. This is a consequence of the following two observations:
\begin{enumerate}
\item Both definitions of the excisive characteristic agree certainly if $M$ is contractible, since in this case the homotopy fiber is contractible.
\item Moreover both definitions are additive in the sense that if $M=M_1\cup_{M_0} M_2$ is a codimension one splitting, then 
\[\chi^\%(M) = j_{1*} \chi^\%(M_1) + j_{2*} \chi^\%(M_2) - j_{0*} \chi^\%(M_0),\]
with $j_i$ the obvious inclusions. The proof in the Waldhausen setting is basically identical to the proof for the homotopy invariant characteristic; both are a consequence of Waldhausen's additivity theorem.
\end{enumerate}

Now, given a homotopy equivalence $\phi\colon N\to M$ between compact manifolds, the path component of the element
\[\tau(\phi)=\chi^\%(M)-\phi_*\chi^\%(N)\in\pi_0\hofib_*(A^\%(M)\to A(M))\]
is represented by the object $\cone(\phi)$, considered as a retractive space over $M$ in the obvious way. This is seen using the cofibration sequence in $\mathcal R_f(M)$
\[M\amalg M \rightarrowtail \cone(\phi) \twoheadrightarrow \Sigma(M\amalg N),\]
using that suspension induces $-\id$ on $K$-theory.

Now we only need to observe that the space $\cone(\phi)$, as a space relative to $M$, also defines an object of the category $\mathcal C^h_f(M)$ and therefore an element in $\Omega\Wh(M)=\vert s\mathcal C^h_f(M)\vert$. It represents $\tau(\phi)$ under the isomorphism $\pi_0\Omega\Wh(M)\cong \pi_0\hofib_*(A^\%(M)\to A(M))$. 

To complete the proof, observe that $\pi_0\Omega\Wh(M)$ is just the geometric definition of the Whitehead group, such that under the isomorphism $\pi_0\Omega\Wh(M)\cong\Whp{M}$ (see e.g.~\cite{Cohen(1973)}), the element $\tau(\phi)$ corresponds to the Whitehead torsion of $\phi$. 


\affiliationone{
   Wolfgang Steimle\\
   Mathematisches Institut\\
   Endenicher Allee 60\\
   53115 Bonn, Germany
   \email{steimle@math.uni-bonn.de}
}


\begin{thebibliography}{DWW03}

\bibitem{ArnoldJ(1972)}
E.~Arnold, James~Jr.
\newblock Local to global theorems in the theory of {H}urewicz fibrations.
\newblock {\em Trans. Amer. Math. Soc.}, 164:179--188, 1972.

\bibitem{ArnoldJ(1973)}
James~E. Arnold, Jr.
\newblock Attaching {H}urewicz fibrations with fiber preserving maps.
\newblock {\em Pacific J. Math.}, 46:325--335, 1973.

\bibitem{Badzioch-Dorabiala(2007)}
Bernard Badzioch and Wojciech Dorabia{\l}a.
\newblock Additivity for the parametrized topological {E}uler characteristic
  and {R}eidemeister torsion.
\newblock {\em $K$-Theory}, 38(1):1--22, 2007.

\bibitem{Cohen(1973)}
Marshall~M. Cohen.
\newblock {\em A course in simple-homotopy theory}.
\newblock Springer-Verlag, New York, 1973.
\newblock Graduate Texts in Mathematics, Vol. 10.

\bibitem{Dorabiala(2002)}
Wojciech Dorabia{\l}a.
\newblock The double coset theorem formula for algebraic {$K$}-theory of
  spaces.
\newblock {\em $K$-Theory}, 25(3):251--276, 2002.

\bibitem{Dwyer-Weiss-Williams(2003)}
W.~Dwyer, M.~Weiss, and B.~Williams.
\newblock A parametrized index theorem for the algebraic {$K$}-theory {E}uler
  class.
\newblock {\em Acta Math.}, 190(1):1--104, 2003.

\bibitem{Hoehn(2009)}
S.~Hoehn.
\newblock {\em The moduli space of compact fiber bundle structures on a
  fibration}.
\newblock PhD thesis, University of Notre Dame, 2009.

\bibitem{Hughes-Taylor-Williams(1990)}
C.~B. Hughes, L.~R. Taylor, and E.~B. Williams.
\newblock Bundle theories for topological manifolds.
\newblock {\em Trans. Amer. Math. Soc.}, 319(1):1--65, 1990.

\bibitem{Kister(1964)}
J.~M. Kister.
\newblock Microbundles are fibre bundles.
\newblock {\em Ann. of Math. (2)}, 80:190--199, 1964.

\bibitem{Kirby-Siebenmann(1977)}
Robion~C. Kirby and Laurence~C. Siebenmann.
\newblock {\em Foundational essays on topological manifolds, smoothings, and
  triangulations}.
\newblock Princeton University Press, Princeton, N.J., 1977.
\newblock With notes by J.~Milnor and M.~F.~Atiyah, Annals of Mathematics
  Studies, No. 88.

\bibitem{Lueck(1989)}
Wolfgang L{\"u}ck.
\newblock {\em Transformation groups and algebraic ${K}$-theory}.
\newblock Springer-Verlag, Berlin, 1989.

\bibitem{May(1999)}
J.~P. May.
\newblock {\em A concise course in algebraic topology}.
\newblock Chicago Lectures in Mathematics. The university of Chicago press,
  Chicago, 1999.

\bibitem{McDuff(1980)}
Dusa McDuff.
\newblock {The homology of some groups of diffeomorphisms}.
\newblock {\em Comment. Math. Helv.}, 55(1):97--129, 1980.

\bibitem{Mazur(1964)}
Barry Mazur.
\newblock The method of infinite repetition in pure topology. {I}.
\newblock {\em Ann. of Math. (2)}, 80:201--226, 1964.

\bibitem{Milnor(1964)}
J.~Milnor.
\newblock Microbundles. {I}.
\newblock {\em Topology}, 3(suppl. 1):53--80, 1964.

\bibitem{Steimle(2011b)}
W.~Steimle.
\newblock Obstructions to stably fibering manifolds.
\newblock preprint, 2011.

\bibitem{Strom(1968)}
Arne Str{\o}m.
\newblock Note on cofibrations. {II}.
\newblock {\em Math. Scand.}, 22:130--142 (1969), 1968.

\bibitem{Tulley(1969)}
Patricia Tulley.
\newblock A strong homotopy equivalence and extensions for {H}urewicz
  fibrations.
\newblock {\em Duke Math. J.}, 36:609--619, 1969.

\bibitem{Waldhausen(1985)}
Friedhelm Waldhausen.
\newblock Algebraic ${K}$-theory of spaces.
\newblock In {\em Algebraic and geometric topology (New Brunswick, N.J.,
  1983)}, pages 318--419. Springer-Verlag, Berlin, 1985.

\bibitem{Weiss(2002)}
Michael Weiss.
\newblock Excision and restriction in controlled {$K$}-theory.
\newblock {\em Forum Math.}, 14(1):85--119, 2002.

\bibitem{Whitehead(1978)}
George~W. Whitehead.
\newblock {\em Elements of homotopy theory}.
\newblock Springer-Verlag, New York, 1978.

\bibitem{Waldhausen-Jahren-Rognes(2008)}
F.~Waldhausen, B.~Jahren, and J.~Rognes.
\newblock Spaces of {PL} manifolds and categories of simple maps.
\newblock preprint, 2008.

\bibitem{Weiss-Williams(1995a)}
Michael Weiss and Bruce Williams.
\newblock Assembly.
\newblock In {\em Novikov conjectures, index theorems and rigidity, Vol.\ 2
  (Oberwolfach, 1993)}, pages 332--352. Cambridge Univ. Press, Cambridge, 1995.

\bibitem{Weiss-Williams(2001)}
Michael Weiss and Bruce Williams.
\newblock Automorphisms of manifolds.
\newblock In {\em Surveys on surgery theory, Vol. 2}, volume 149 of {\em Ann.
  of Math. Stud.}, pages 165--220. Princeton Univ. Press, Princeton, NJ, 2001.

\end{thebibliography}
\end{document}